\newtheorem{theorem}{Theorem}[section]
\newtheorem{proposition}[theorem]{Proposition}
\newtheorem{lemma}[theorem]{Lemma}
\newtheorem{corollary}[theorem]{Corollary}
\theoremstyle{definition}
\newtheorem{definition}[theorem]{Definition}
\newtheorem*{remark}{Remark}
\newcommand{\Z}{\mathbb{Z}}
\newcommand{\R}{\mathbb{R}}
\newcommand{\Rplus}{\mathbb{R}_+}
\newcommand{\C}{\mathbb{C}}
\renewcommand{\H}{\mathbb{H}}
\newcommand{\dist}{\operatorname{dist}}
\newcommand{\prob}[1]{\mathrm{\textbf{P}} \left ( #1 \right)}
\newcommand{\Qxprob}[1]{\mathrm{\textbf{Q}}_x \left( #1 \right)}
\newcommand{\expect}[1]{\mathrm{\textbf{E}} \left[ #1 \right]}
\newcommand{\condexpect}[2]{\mathrm{\textbf{E}} \left[ \left. #1 \right | #2 \right]}
\newcommand{\indicate}[1]{\mathbf{1} \left \{ #1 \right \}}
\newcommand{\Ft}{\mathcal{F}_t}
\newcommand{\F}{\mathcal{F}}
\newcommand{\hitpoints}{\gamma \cap \mathbb{R}}
\newcommand{\hitpointst}{\gamma[0,t] \cap \Rplus}
\newcommand{\Henergy}{\mathcal{E}}
\newcommand{\te}[1]{#1_t^{\epsilon}}
\newcommand{\xe}[1]{#1_x^{\epsilon}}
\newcommand{\ep}[1]{#1^{\epsilon}}
\newcommand{\haussdim}[0]{\mathrm{dim_H} \,}
\newcommand{\borelplus}{\mathcal{B} \left( \Rplus \right)}
\newcommand{\mum}[1]{\mu \left( #1 \right)}
\newcommand{\leb}[1]{\left| #1 \right|}
\newcommand{\dtrans}[2]{#2_{d, #1}}
\begin{document}

\title{The Covariant Measure of SLE on the Boundary}
\author{\begin{tabular}{cc}
Tom Alberts\footnote{Research supported in part by NSF Grant OISE 0730136.} & Scott Sheffield\footnote{Research supported in part by NSF Grants DMS 0403182, DMS 064558 and OISE 0730136.} \\
\small{Department of Mathematics} & \small{Department of Mathematics} \\
\small{University of Toronto} & \small{Massachusetts Institute of Technology} \\
\small{Toronto, ON, Canada} & \small{Cambridge, MA, USA} \\
\small{E-mail: \texttt{alberts@math.toronto.edu}} & \small{E-mail: \texttt{sheffield@math.mit.edu}} \\
\end{tabular}
\\}
\date{}

\maketitle

\begin{abstract}
We construct a natural measure $\mu$ supported on the intersection of a chordal SLE$(\kappa)$ curve $\gamma$ with $\R$, in the range $4 < \kappa < 8$. The measure is a function of the SLE path in question.  Assuming that boundary measures transform in a ``$d$-dimensional'' way (where $d$ is the Hausdorff dimension of $\gamma \cap \R$), we show that the measure we construct is (up to multiplicative constant) the unique measure-valued function of the SLE path that satisfies the Domain Markov property.
\end{abstract}

\section{Introduction \label{Intro}}

\subsection{Statement of Main Result}

Let $\gamma$ be a chordal SLE$(\kappa)$ curve in $\mathbb{H}$ with
$4 < \kappa < 8$. In this range of $\kappa$ it is well known that the intersection of $\gamma$
with the real line is a random set with fractional dimension, and in the
recent papers \cite{alberts_sheff:dimension} and
\cite{schramm_zhou:dimension} it has been shown that the Hausdorff
dimension of $\hitpoints$ is almost surely $d := 2-8/\kappa$. This result
gives some information on the size of $\hitpoints$, but it is
still only a qualitative description. Two instances of
$\hitpoints$ will be very different as point sets, even
though their Hausdorff dimension will be the same. The purpose of
this paper is to gain more information on the local structure of
$\hitpoints$ by constructing a measure on it, somewhat analogous to the local time measure on the zeros of a Brownian motion. That is, we define a measure-valued function $\mu$ of $\gamma$ which is almost surely supported on $\gamma \cap \R$.  For each finite interval $I$, we will interpret the quantity $\mu(I)$ as a kind of $d$-dimensional ``volume'' of $\gamma \cap I$. Although we will not prove that the measure we construct is equivalent to the most well known notions of $d$-dimensional volume (for example, it remains an open question whether it is equivalent to $d$-dimensional Minkowski measure on $\gamma \cap \R$, or is some sort of Hausdorff content), we will see that it it is the unique $d$-dimensional volume measure on $\gamma \cap \R$ with certain natural conformal covariance properties.

The construction of our measure is inspired by a similar work in progress by the second author and Greg Lawler (see \cite{lawler_sheff:time}), but many of the ideas can already be found in \cite{lawler:curvedim}. Although in spirit our work is very similar to the work in progress by Lawler and Sheffield, there are differences that we should highlight. First, the measure they are constructing is supported on the SLE($\kappa$) curve in the interior of the domain, while ours deals only with the intersection of the curve with the boundary in the range $4 < \kappa < 8$. Second, although most of the content of \cite{lawler_sheff:time} is geared towards constructing a measure supported on $\gamma$, they interpret this measure as a ``natural time parameterization'' for the curve and most of their work is draped in the language of the time parameterization. Finally, the technical lemmas they use to establish the existence of a non-trivial measure are very different from the ones we use. In both this work and \cite{lawler_sheff:time}, the key idea in the construction of the measure is an appeal to the Doob-Meyer decomposition theorem, which gives a unique way of writing a supermartingale as a local martingale minus a predictable, non-decreasing process (all processes here are assumed to be cadlag; see Section \ref{DMSection} for a precise definition of a predictable process). Our work builds on a recent ``two-point martingale'' discovered by Schramm and Zhou (\cite{schramm_zhou:dimension}) and uses the well established theory of the Doob-Meyer decomposition to give a clean and direct proof of the existence of the measure. Lawler and Sheffield, on the other hand, have no such two-point martingale available to them and they are forced to reinvent much of the Doob-Meyer theory from scratch.

The intuition behind the use of Doob-Meyer is very simple. Let $K_t$ be the SLE hull at time $t$. As the curve goes from $0$ to $\infty$ it swallows all of the points on the positive real axis $\Rplus$. Since SLE curves are non-self-crossing, after an interval $I$ has been swallowed it is impossible for the curve to return to it. As we stated earlier, our intention is that the measure of $I$ will in some sense describe the $d$-dimensional volume of the set $\gamma \cap I$. Hence, it is reasonable to assume that the measure of $I$ is completely determined at its swallowing time, which is random but still a stopping time. At any time $t \geq 0$ we can always decompose $I$ into a left interval $I \cap K_t$ of swallowed points and a right interval $I \backslash K_t$ of unswallowed points, and if $\mu$ is any Borel measure on $\Rplus$ then
\begin{align} \label{muDecomp1}
\mum{I} = \mum{I \cap K_t} + \mum{I \backslash K_t}.
\end{align}
Assuming that $\mum{I \cap K_t}$ is measurable with respect to the filtration generated by $\gamma[0,t]$, taking conditional expectations of both sides of \eqref{muDecomp1} gives
\begin{align}\label{muDecomp2}
\condexpect{\mu(I)}{\gamma[0,t]} = \mum{I \cap K_t} + \condexpect{\mum{I \backslash K_t}}{\gamma[0,t]}.
\end{align}
The process $\mum{I \cap K_t}$ is non-decreasing with $t$, since $K_t$ is, and assuming that $\mu(I)$ has finite expectation, the left hand side of \eqref{muDecomp2} is a martingale. Consequently, $\condexpect{\mum{ I \backslash K_t }}{\gamma[0,t]}$ must be a supermartingale, and \eqref{muDecomp2} is its Doob-Meyer decomposition. This gives a strategy for constructing $\mu$. The idea is to identify a natural, explicit formula for the supermartingale, and then define the process $\mum{I \cap K_t}$ to be the non-decreasing part of its Doob-Meyer decomposition. Taking $t \to \infty$ gives $\mu(I)$, since all points are eventually swallowed, and then repeating the process for all intervals $I$ \textit{uniquely} determines $\mu$.

Our choice of $\condexpect{\mum{ I \backslash K_t }}{\gamma[0,t]}$ will ultimately be determined by a requirement that the boundary measure we construct satisfies a certain Domain Markov property. Suppose we are given $\gamma[0,t]$ (and the corresponding hull $K_t$) and let $h_t : \H \backslash K_t \to \H$ be the unique conformal map such that $h_t(\gamma(t)) = 0$, $h_t(\infty) = \infty$, and $h_t(z) \sim z$ as $z \to \infty$. The usual Domain Markov property for SLE says that the image curve
\begin{align*}
\gamma^t(s) := h_t(\gamma(t+s)), \,\, s \geq 0
\end{align*}
is independent of $\gamma[0,t]$ and has the SLE law in $\H$ from $0$ to $\infty$. A similar statement should also hold for the boundary measure.  To formulate this statement, we first need to articulate how we wish our measure to transform under a conformal map $\phi$.  Just as ``lengths'' are locally stretched by a factor of $|\phi'|$ and ``areas'' by a factor of $|\phi'|^2$, it is natural to expect the measure of a $d$-dimensional set to be locally changed by a factor of $|\phi'|^d$.

\begin{definition}\label{transformDefn}
Let $D_1$ and $D_2$ be simply connected domains and let $\phi$ be a conformal map of $D_1$ onto $D_2$. Assume that $\phi'$ extends continuously to all of $\partial D_1$. Let $\nu$ be a measure supported on $\partial D_1$. The \textbf{d-dimensional covariant transform} of $\nu$ by $\phi$ is the measure $\dtrans{\phi}{\nu}$ on $\partial D_2$ defined by
\begin{align*}
\dtrans{\phi}{\nu}(\phi(A)) := \int_A \leb{\phi'(w)}^d \, d\nu(w)
\end{align*}
for all Borel subsets $A$ of $\partial D_1$. We will also use the convenient infinitesimal shorthand
\begin{align*}
d \dtrans{\phi}{\nu}(\phi(w)) = \leb{\phi'(w)}^d d\nu(w).
\end{align*}
If $\phi'$ does not extend continuously to all of $\partial D_1$ then we restrict the definition to Borel sets $A$ that are contained within some boundary segment $S \subset \partial D_1$ to which $\phi'$ does extend continuously.
\end{definition}

With this definition in hand we state our main result:

\begin{theorem}\label{mainTheorem}
Let $\mathrm{\textbf{P}}$ be the chordal SLE($\kappa$) measure on curves in $\H$ from $0$ to $\infty$.  Up to redefinition on a set of $\mathrm{\textbf{P}}$-measure zero, there is a unique measure-valued function $\mu$ on curves $\gamma$ (defined as a measure on $\R_+$ for $\mathrm{\textbf{P}}$ almost all $\gamma$) with the following properties:
\begin{enumerate}
\item {\bf Scaling:} $\mu(r \cdot)$ has the same law as $r^d \mu(\cdot)$.
\item {\bf Finite Expectation/Normalization:}  $\expect{\mum{(0,1]}} = 1/d$.
\item {\bf Measurability:} The process given by restricting $\mu$ to $K_t$, i.e. $\mum{\cdot \cap K_t}$, is predictable.
\item {\bf Domain Markov Property:} Given $\gamma[0,t]$, the conditional law of $\mu_{d, h_t}$ (where we have restricted $\mu$ to $\Rplus \backslash K_t$, on which $h_t$ is smooth) is the same as the original law of $\mu$ restricted to $h_t(\Rplus \backslash K_t)$.
\end{enumerate}
In addition, the measure almost surely
\begin{enumerate}
\renewcommand{\labelenumi}{(\roman{enumi})}
\item is supported on $\gamma \cap \Rplus$,
\item is singular with respect to Lebesgue measure,
\item is free of atoms,
\item and assigns positive mass to an open interval if and only if the curve hits that interval.
\end{enumerate}
Moreover, for any interval $I \subset \Rplus$
\begin{align*}
\expect{\mum{I \cap K_t}} = \frac{1}{\Gamma \left(\frac{12 - \kappa}{2 \kappa} \right)} \int_I \int_{x^2/2t}^{\infty} x^{d-1} u^{\frac{12 - \kappa}{2\kappa}-1} e^{-u} \, du \, dx.
\end{align*}
\end{theorem}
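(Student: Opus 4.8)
The plan is to carry out the Doob--Meyer strategy of the introduction with an \emph{explicit} one-point supermartingale, reserving the Schramm--Zhou two-point martingale for the regularity estimates that make Doob--Meyer go through.

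\emph{Step 1 (the one-point martingale).} For $x>0$ let $T_x$ be the time at which $x$ is swallowed, write $X^x_t=h_t(x)>0$ and $h_t'(x)>0$ for $t<T_x$ (recall $K_t\cap\Rplus$ is always an interval $[0,r_t]$, so unswallowed real points sit to the right of the tip), and set
\begin{align*}
M^x_t:=h_t'(x)^{\,1-d}\,\bigl(X^x_t\bigr)^{\,d-1}=\bigl(X^x_t/h_t'(x)\bigr)^{d-1}\ \ (t<T_x),\qquad M^x_t:=0\ \ (t\ge T_x).
\end{align*}
An It\^o computation using the chordal Loewner equation shows that \emph{exactly because} $d=2-8/\kappa$ the process $(M^x_t)_{t<T_x}$ is a local martingale; being nonnegative it is a supermartingale, so its extension by zero is a nonnegative supermartingale with $M^x_0=x^{d-1}$. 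The form of $M^x$ is forced on us: properties 1 and 2 give $\expect{\mu(dx)}=x^{d-1}dx$, and unwinding Definition~\ref{transformDefn} with $\phi=h_t$ the Domain Markov property forces $\condexpect{\mu(dx)\,\mathbf 1_{x\notin K_t}}{\gamma[0,t]}=M^x_t\,dx$. For an interval $I\subset\Rplus$ put $N^I_t:=\int_I M^x_t\,dx$; by Fubini this is a nonnegative supermartingale with $N^I_0=\int_I x^{d-1}dx$, and since every point is a.s.\ eventually swallowed, $N^I_\infty=0$.

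\emph{Step 2 (Doob--Meyer: the heart of the matter).} I would show that $N^I$ is a cadlag supermartingale of class (D) --- indeed a potential --- so that the Doob--Meyer theorem applies and yields $N^I_t=\widetilde M^I_t-A^I_t$ with $\widetilde M^I$ a uniformly integrable martingale (continuous, since we are in a Brownian filtration) and $A^I$ predictable, nondecreasing, $A^I_0=0$. Class (D) I would obtain from $L^2$-boundedness: $\sup_t\expect{(N^I_t)^2}=\sup_t\int_I\!\int_I\expect{M^x_tM^y_t}\,dx\,dy$, and the Schramm--Zhou two-point martingale supplies a bound on $\expect{M^x_tM^y_t}$ that is uniform in $t$ and integrable over $I\times I$ (the only singularity is of order $|x-y|^{d-1}$, integrable since $d-1>-1$). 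I would then \emph{also} prove that $A^I$ is continuous --- this is the subtlest point and is needed for the qualitative properties below. A Girsanov reweighting of the Loewner flow by $M^x$ turns $X^x$ into a (time-changed) Bessel process of dimension $3-\tfrac{12}{\kappa}\in(0,2)$ started from $x$; since this differs from the unweighted dimension, the two laws are singular up to the hitting time of $0$, which forces $M^x_t\to 0$ as $t\uparrow T_x$ for each fixed $x$, and by Fubini this rules out any jump of $N^I$ (in particular at times when a whole interval is swallowed at once by a loop). Now define
\begin{align*}
\mum{I\cap K_t}:=A^I_t,\qquad \mum{I}:=A^I_\infty<\infty\ \text{a.s.}
\end{align*}

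\emph{Step 3 (the measure, its axioms, and uniqueness).} Linearity and uniqueness of the Doob--Meyer decomposition make $A^{\,\cdot}$ additive over adjacent intervals, and since $\expect{A^{(a,b]}_\infty}=\int_a^b x^{d-1}dx\to0$ as $b\downarrow a$ a Carath\'eodory/monotone-class argument over rational endpoints patches the $A^I$, for $\mathrm{\textbf{P}}$-a.e.\ $\gamma$, into a locally finite measure $\mu$ on $\Rplus$. The four axioms are then checked directly: \emph{scaling} from Brownian scaling of the flow (it rescales each $M^x$, hence $N^I$, hence $A^I$); \emph{normalization} from $\expect{\mum{(0,1]}}=\widetilde M^{(0,1]}_0-\lim_t\expect{N^{(0,1]}_t}=\int_0^1 x^{d-1}dx=1/d$; \emph{measurability} directly from the predictability produced by Doob--Meyer; and the \emph{Domain Markov property} from the cocycle identity $M^x_{t+s}=h_t'(x)^{1-d}\,M^{(t),\,h_t(x)}_s$ for the restarted curve $\gamma^t$, which exhibits $N^I_{t+\cdot}$ as a fixed ($\gamma[0,t]$-measurable) reweighting of the analogous supermartingale built from $\gamma^t$ --- uniqueness of Doob--Meyer then identifies the increasing parts and gives $\dtrans{h_t}{(\mu|_{\Rplus\backslash K_t})}=\mu^{(t)}|_{h_t(\Rplus\backslash K_t)}$. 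For \emph{uniqueness} of $\mu$: any $\mu'$ with properties 1--4 satisfies $\expect{\mu'(dx)}=x^{d-1}dx$ (by 1, 2), hence $\condexpect{\mu'(I\backslash K_t)}{\gamma[0,t]}=N^I_t$ (by 4); by 3, $\mu'(I\cap K_t)$ is predictable and nondecreasing while $\condexpect{\mu'(I)}{\gamma[0,t]}$ is a uniformly integrable martingale, so $N^I_t=\condexpect{\mu'(I)}{\gamma[0,t]}-\mu'(I\cap K_t)$ \emph{is} the Doob--Meyer decomposition of $N^I$; uniqueness forces $\mu'(I\cap K_t)=A^I_t$, and letting $t\to\infty$ gives $\mu'(I)=\mum I$ a.s.\ for every rational interval, hence $\mu'=\mu$ a.s.

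\emph{Step 4 (qualitative properties and the explicit formula).} Continuity of each $A^I$ says that $\mu$ gives no mass to an interval the curve never enters and has no atoms; since Lebesgue-a.e.\ point of any interval lies in such a ``missed'' gap, the support is contained in $\gamma\cap\Rplus$, and singularity with respect to Lebesgue measure follows from $\haussdim(\gamma\cap\R)=d<1$. The implication ``positive mass $\Rightarrow$ curve hits'' is the support statement; the converse I would obtain from a Paley--Zygmund/second-moment argument on $\mum J$ for small intervals $J$ --- $\expect{\mum J}>0$ together with $\expect{\mum J^2}\le C\,\expect{\mum J}^2$ from the one- and two-point estimates --- bootstrapped with the Domain Markov property. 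Finally, $\expect{\mum{I\cap K_t}}=\expect{A^I_t}=\widetilde M^I_0-\expect{N^I_t}=\int_I\bigl(x^{d-1}-\expect{M^x_t}\bigr)dx$, and the $M^x$-reweighting of Step 2 identifies $\expect{M^x_t}=x^{d-1}\,\mathrm{\textbf{P}}\bigl(\text{a }\mathrm{BES}(3-\tfrac{12}{\kappa})\text{ from }x\text{ has not hit }0\text{ by time }t\bigr)$; since that hitting time is distributed as $x^2/(2G)$ with $G\sim\Gamma\bigl(\tfrac{12-\kappa}{2\kappa}\bigr)$ (note $\tfrac{12-\kappa}{2\kappa}=1-\tfrac12\bigl(3-\tfrac{12}{\kappa}\bigr)$), substituting gives exactly the stated incomplete-Gamma formula. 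The genuine obstacle throughout is Step 2 --- proving $N^I$ is a continuous class-(D) supermartingale (ruling out jumps at loop-swallowing times, and obtaining the uniform $L^2$ control) --- and it is there that the Schramm--Zhou two-point martingale is indispensable; a lesser obstacle is the second-moment lower bound behind ``curve hits $\Rightarrow$ positive mass.''
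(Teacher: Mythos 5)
Your plan is essentially the paper's own: identify the one-point local martingale $M_t(x)=(h_t'(x)/h_t(x))^{1-d}$ as the conditional density forced by scaling, normalization, and the Domain Markov property; form the supermartingale $N^I_t=\int_I M_t(x)\,dx$; verify class (D) via the Schramm--Zhou two-point martingale bound $\expect{M_\tau(x)M_\tau(y)}\le c\,x^{-\beta}(y-x)^{-\beta}$; take the increasing part $A^I$ of the Doob--Meyer decomposition; patch the $A^I$ into a measure; and get uniqueness from uniqueness of Doob--Meyer. The explicit gamma formula for $\expect{\mum{I\cap K_t}}$ is derived exactly as in the paper (Girsanov change to a Bessel flow of dimension $3-12/\kappa$, hitting-time law $x^2/(2G)$ with $G\sim\Gamma(\tfrac{12-\kappa}{2\kappa})$). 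Your argument that $M_t(x)\to 0$ as $t\uparrow T_x$ via mutual singularity of the $P$-Bessel and $Q_x$-Bessel laws is a legitimate variant of the paper's time-change proof.

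There is, however, one genuine gap, and it is at the point you yourself flag as the heart of the matter: continuity of $A^I$, i.e.\ ruling out a jump of $N^I$ at a time $T$ when $\gamma$ closes a loop and swallows a whole subinterval $J\subset I$ at once. You write that $M^x_t\to0$ as $t\uparrow T_x$ for each fixed $x$ ``and by Fubini this rules out any jump of $N^I$.'' Fubini gives only that for a.e.\ $\omega$, for Lebesgue-a.e.\ $x$, $M_t(x)\to0$ as $t\uparrow T_x$ (this is the paper's Corollary~\ref{zeroLebesgue}); it does \emph{not} let you pass the limit through the integral: pointwise a.e.\ convergence $M_t(x)\to 0$ on $J$ does not force $\int_J M_t(x)\,dx\to 0$ unless you have a dominating function or uniform integrability across $x\in J$, and $\sup_{t}M_t(x)$ is not Lebesgue-integrable in $x$. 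The mass stored in the integral could in principle survive to $t=T$ and then vanish discontinuously. The paper closes this gap with a quantitative argument: it proves (Proposition~\ref{contNotInI}) that $X_t(I)$ is a \emph{continuous martingale} on the open set of times when $\gamma(t)\notin\overline I$ (using the martingale representation theorem in the Brownian filtration and stopping times that keep the tip away from $I$), and then (Corollary~\ref{SmallMass}, via Doob's maximal inequality applied to $\int_I M^\epsilon_t$, the two-point bound, and Borel--Cantelli over dyadic partitions) shows that for any $\alpha<d/2$, eventually no dyadic subinterval of length $|I|2^{-n}$ has $\sup_t X_t$-mass exceeding $2^{-n\alpha}$; a dyadic localization argument (Proposition~\ref{XContinuous}) then pins any putative jump of $X_t(I)$ onto one of at most four dyadic subintervals of arbitrarily small length, contradicting the small-mass bound. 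So the two-point martingale enters twice — once for class (D), once for regularity — and your sketch only captures the first use. Similarly, your Paley--Zygmund route to ``curve hits $\Rightarrow$ positive mass'' needs a 0--1 bootstrap to upgrade a positive lower bound on $\prob{\mum J>0}$ to a conditional probability of one; the paper does this cleanly by first applying the Blumenthal 0--1 law to $\{\mum{(0,\epsilon)}>0\ \forall\,\epsilon\}$ and then invoking the strong Domain Markov property at the first hitting time of $I$.
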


The uniqueness part of Theorem \ref{mainTheorem} is essentially guaranteed by the uniqueness of the Doob-Meyer decomposition, provided that we can find an explicit supermartingale to take the place of $\condexpect{\mum{I \backslash K_t}}{\gamma[0,t]}$. In fact the extra scaling, normalization and Domain Markov properties of $\mu$ uniquely determine what $\condexpect{\mum{I \backslash K_t}}{\gamma[0,t]}$ must be. The scaling implies that $\expect{\mum{rI}} = r^d \expect{\mum{I}}$ for any interval $I \subset \Rplus$; from this we can deduce that there exists a constant $C > 0$ such that $\expect{\mum{I}} = C \int_I x^{d-1} \, dx$, or in the infinitesimal shorthand $\expect{d \mu(x)} = C x^{d-1} \, dx$. The normalization implies that $C=1$. A corollary of the Domain Markov property is that the conditional expectation of $\mu_{d,h_t}$ given $\gamma[0,t]$ is the same as the a priori expectation of $\mu$; that is
\begin{align*}
\condexpect{d \dtrans{h_t}{\mu}(h_t(x))}{\gamma[0,t]} = h_t(x)^{d-1} \, d(h_t(x)) = h_t(x)^{d-1} h_t'(x) \, dx,
\end{align*}
for unswallowed points $x \in \R_+ \backslash K_t$. But by definition of the $d$-dimensional covariant transform, this is the same as
\begin{align*}
\condexpect{h_t'(x)^d d\mu(x)}{\gamma[0,t]} = h_t'(x)^{d-1} h_t'(x) \, dx
\end{align*}
which, since $h_t'(x)$ is measurable with respect to the information in $\gamma[0,t]$, implies that
\begin{align*}
\condexpect{d\mu(x)}{\gamma[0,t]} = \left( \frac{h_t'(x)}{h_t(x)} \right)^{1-d} \, dx.
\end{align*}
To simplify notation we will write
\begin{align*}
M_t(x) := \left( \frac{h_t'(x)}{h_t(x)} \right)^{1-d}, \quad X_t(I) := \int_{I \backslash K_t} M_t(x) \, dx.
\end{align*}
In Section \ref{MtSection} we will see that $M_t(x)$ is actually a positive local martingale for each $x \in \Rplus$, and that for each $x$ it is almost surely the case that $M_t(x) \to 0$ as $t \uparrow T_x$, where $T_x$ is the swallowing time of $x$ by the SLE hull. If we define $M_t(x)$ to be zero for $t \geq T_x$, then we have
\begin{align*}
X_t(I) = \int_I M_t(x) \, dx.
\end{align*}
In Section \ref{ConstructSection} we will see that $X_t(I)$ is actually a supermartingale. By the argument outlined earlier we can use its non-decreasing process to construct the measure $\mu$. The difficult part of the construction is in showing that $X_t(I)$ actually admits a decomposition as a \textit{martingale} minus a predictable, non-decreasing process. This form of the decomposition is \textit{not} guaranteed for any positive supermartingale. What is true, as we will describe in more detail in Section \ref{DMSection}, it that a positive supermartingale can always be uniquely decomposed as a \textit{local martingale} minus a predictable, non-decreasing process. The latter decomposition, however, can be ``trivial'' in the sense that the non-decreasing process may turn out to be identically zero. In fact, one surprising result of this paper is that while for each $x \in \R_+$ the process $M_t(x)$ is itself a local martingale --- and hence admits only a trivial decomposition --- the integral $X_t(I)$ admits a non-trivial decomposition. This illustrates the fact that an average of non-negative local martingales with respect to a given filtration need not be a local martingale (even though an average of non-negative {\it martingales} is a martingale by Fubini's theorem).

The details of constructing the Doob-Meyer decomposition for $X_t(I)$ are the subject of Section \ref{ConstructSection}. To mention some keywords, we first must show that the process $X_t(I)$ satisfies a uniform integrability condition called the \textit{class $\mathcal{D}$} property. This is a technical condition that \textit{a priori} seems difficult to verify, but actually turns out to be easily satisfied thanks to a recent ``two-point'' martingale discovered by Schramm and Zhou. After establishing the \textit{class $\mathcal{D}$} property we also show that $X_t(I)$ is \textit{regular}. This additional property guarantees that the increasing part corresponding to $X_t(I)$ is continuous which, as we will see in Section \ref{muProperties}, proves that our boundary measure is free of atoms.

In Section \ref{approxSection} we discuss the relationship between our work and the work of Schramm and Zhou in \cite{schramm_zhou:dimension}, which motivated much of this paper. Schramm and Zhou also used $M_t(x)$ to construct what is implicitly the same boundary measure as ours, although they constructed it only as a $\gamma$-dependent subsequential limit of approximating measures $\ep{\mu}$ and were only able to show convergence on an event of positive probability (less than one). Via another abstract appeal to the Doob-Meyer decomposition, we are able to strengthen the method of taking a limit of the $\ep{\mu}$ and show that there is a fixed subsequence $\epsilon_j$ tending to zero along which $\mu^{\epsilon_j}$ converges weakly to $\mu$, with probability one. Details of the latter point occupy Section \ref{epSection}. In Section \ref{conformalMinkSection} we show how the $\ep{\mu}$ measures lead to a natural conjecture that our boundary measure on $\hitpoints_+$ is in fact the same as the Minkowski measure, up to some fixed deterministic constant.

The rest of the paper is organized as follows: in Section \ref{NotationSect} we briefly set up notation and establish a few preliminary facts. In Section \ref{MtSection} we discuss the local martingale $M_t(x)$ and describe many of its properties, including the two-point bound of Schramm and Zhou which is essential to our work. In Section \ref{DMSection} we lay out the statement of the Doob-Meyer decomposition and associated lemmas that we will use to construct our measure. Section \ref{ConstructSection} is mostly devoted to verifying that $X_t(I)$ satisfies the hypotheses of the theorems listed in Section \ref{DMSection}, and then using the non-decreasing part of $X_t(I)$ to construct our boundary measure. In Section \ref{muProperties} we prove that the boundary measure of Section \ref{ConstructSection} has all the properties of Theorem \ref{mainTheorem}, including the important Domain Markov property. Section \ref{approxSection} discusses the relationship between our boundary measure and the approximate measures of Schramm and Zhou.

\section{Notation and Preliminaries \label{NotationSect}}

First we briefly establish some notation. By the symmetry of the SLE curve about the imaginary axis it is
enough to construct our measure only on the positive reals, which we
will denote by $\Rplus$. Given a Borel set $A \in \borelplus$, we will
use $\left| A \right|$ to denote the Lebesgue measure of $A$.

Throughout this paper it will be convenient to work with a single abstract probability space $(\Omega, \F, P)$. Our heavy usage of the Doob-Meyer decomposition requires many precise statements about martingales, and it is easier to deal with a single probability measure and filtration everywhere. Let $\{B_t; t \geq 0 \}$ be a one-dimensional standard Brownian motion on our probability space with $B_0 = 0$ almost surely, and let $\F_t$ be the filtration it generates (where $\F_0$ is augmented to include all $P$-negligible sets). With this definition $\Ft$ satisfies what are commonly called the \textit{usual conditions} in the literature on stochastic processes (i.e., $\F_t$ is right continuous and $\F_0$ contains all the $P$-negligible events in $\F$; for a further explanation and a precise definition of right continuity the interested reader can consult \cite[Chapter 1, Definition 2.25]{karatzas_shreve}).  For our purposes it is sufficient to know that the \textit{usual conditions} are a technical requirement of the standard Doob-Meyer decomposition theorem (as well as many other standard theorems in stochastic calculus).

\subsection{SLE and Bessel Processes}

Chordal SLE on $(\Omega, \F, P)$ is constructed via the Loewner equation
\begin{align}\label{SLEeqn}
\partial_t g_t(z) = \frac{a}{g_t(z) - U_t}, \,\, g_0(z) = z,
\end{align}
where $a = 2/\kappa$ and $U_t = -B_t$. This differs from the standard notation introduced in \cite{schramm:LERW} but is convenient since many of our parameters are in terms of $1/\kappa$ rather than $\kappa$. Let $\gamma$ be the SLE curve generated by $g_t$. In many instances we will also use $\gamma$ as shorthand for the trace $\gamma[0, \infty)$; it will be clear from the context which we are referring to.

In this paper we will only consider the range $1/4 < a < 1/2 \quad (4 < \kappa < 8)$. For each $z \in \mathbb{H}$, let $T_z$ be the stopping time at which the solution to \eqref{SLEeqn} explodes. It is well known in SLE that for $\kappa > 4$, $T_z$ is finite almost surely for each $z \in \overline{\H}$. The SLE hull $K_t$ is defined as
\begin{align*}
K_t := \overline{\{ z \in \overline{\H} : T_z \leq t\}}.
\end{align*}
At any time $t > 0$, $K_t \cap \R$ is a closed interval with zero in its interior. Note that $K_t \cap \R$ is \textit{not} the same as $\gamma[0,t] \cap \R$ : the former is the set of points swallowed by the curve and the latter is the set of points hit by the curve. In fact, we have the strict inclusion
\begin{align*}
\gamma[0,t] \cap \R \subset K_t \cap \R
\end{align*}
since hit points are swallowed points but not necessarily vice-versa.

For ease of notation, we will write $h_t$ for the shifted maps
\begin{align*}
h_t(z) = g_t(z) - U_t.
\end{align*}
For fixed $t > 0$, both $g_t$ and $h_t$ are analytic on $\overline{\H} \backslash K_t$. In fact $h_t$ is the unique conformal map from $\H \backslash K_t$ to $\H$ such that $h_t(\gamma(t)) = 0$, $h_t(\infty) = \infty$, and $h_t(z) \sim z$ as $z \to \infty$. From $U_t = -B_t$ and \eqref{SLEeqn} it follows that
\begin{align}\label{htdiff}
dh_t(z) = \frac{a}{h_t(z)} \, dt + dB_t, \,\, h_0(z) = z.
\end{align}
By convention we set $h_t(0) = 0$ for all $t \geq 0$. For each $x \in \Rplus$, the stochastic differential equation \eqref{htdiff} identifies $h_t(x)$ as a Bessel process on the real line, and the solution $h_t$ can be regarded as a stochastic flow on $\Rplus$. Note that
\begin{align*}
T_z = \inf \{ t \geq 0 : h_t(z) = 0 \}.
\end{align*}

Occasionally we will need to make use of the flow started from a later time, so for $s, t \geq 0$ we define $B_{t,s} := B_{t+s} - B_t$, $\F_{t,s} := \sigma(B_{t,s}; s \geq 0)$, and the flow $h_{t,s}$ by
\begin{align*}
dh_{t,s}(z) = \frac{a}{h_{t,s}(z)} \, ds + dB_{t,s}, \,\, h_{t,0}(z) = z,
\end{align*}
where all differentials are with respect to $s$. Again we adopt the convention that $h_{t,s}(0) = 0$ for all $s \geq 0$. The Markov property for the Brownian motion implies that the process
\begin{align*}
\left \{ h_{t,s}(z); s \geq 0, z \in \overline{\H} \right \}
\end{align*}
is independent of $\Ft$ and has the same law as the process $\left \{ h_t(z); t \geq 0, z \in \overline{\H} \right \}$. It is clear from the definitions of $h_t$ and $h_{t,s}$ that
\begin{align}\label{hCommute}
h_{t+s}(z) = h_{t,s}(h_t(z))
\end{align}
for all $z \in \overline{\H}$ and all $s, t \geq 0$. We further define
\begin{align*}
\gamma^t(s) := h_t(\gamma(t+s)), \quad K_{t,s} := h_t(K_{t+s}), \,\, s \geq 0,
\end{align*}
and then $h_{t,s}$ can be characterized as the unique conformal map from $\H \backslash K_{t,s}$ onto $\H$ such that $h_{t,s}(\gamma^t(s)) = 0$, $h_{t,s}(\infty) = \infty$ and $h_{t,s}(z) \sim z$ as $z \to \infty$.

From now on we will only be interested in the Bessel flow restricted to $\Rplus$. There are some basic properties of the flow that we will use repeatedly. We list them below. For a general survey of Bessel processes see \cite[Chapter XI]{revuz_yor}, and for more on flows arising from stochastic differential equations see \cite{kunita:book}.

\begin{proposition} \label{BesselProp}
The following are true:
\begin{enumerate}
\addtolength{\itemsep}{-0.5\baselineskip}
\renewcommand{\labelenumi}{(\alph{enumi})}
\item The processes $\{h_t(x); t \geq 0, x > 0 \}$ and $\{r^{-1}h_{r^2t}(rx); t \geq 0, x > 0\}$ have the same law.
\item For $a \geq 1/2 \,\, (\kappa \leq 4)$  we have $\prob{T_x = \infty \textnormal{ for all } x \in \Rplus} = 1$.
\item For $a < 1/2 \,\, (\kappa > 4)$ we have $\prob{T_x < \infty \textnormal{ for all } x \in \Rplus} = 1$.
\item If $0 < x < y$, then $0 < h_t(x) < h_t(y)$ for all $t < T_x$.
\item If $0 < x < y$, then $T_x \leq T_y$.
\item For $a > 1/4 \,\, (\kappa < 8)$ and $0 < x < y$, the event $T_x = T_y$ has positive probability.
\item For $a \leq 1/4 \,\, (\kappa \geq 8)$ and $0 < x < y$, the event $T_x = T_y$ has probability zero.
\item For $a > 0$, we have $0 \leq h_t'(x) \leq 1$ for all $x \in \Rplus$ and $t < T_x$.
\end{enumerate}
\end{proposition}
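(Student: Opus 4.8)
The plan is to treat each item by reducing to classical facts about Bessel processes, since for each fixed $x \in \Rplus$ the SDE \eqref{htdiff} identifies $t \mapsto h_t(x)$ (killed at $T_x$) as a Bessel-type diffusion. Writing $h_t(x)$ as the solution of $dh_t(x) = \frac{a}{h_t(x)}\,dt + dB_t$, one recognizes this as a Bessel process of dimension $\delta = 1 + 2a = 1 + 4/\kappa$ (up to the usual identification, since a BES($\delta$) process $\rho_t$ satisfies $d\rho_t = \frac{\delta - 1}{2\rho_t}\,dt + dW_t$). Part (a) is Brownian scaling: one checks directly that $\tilde h_t(x) := r^{-1} h_{r^2 t}(r x)$ solves the same SDE driven by the rescaled Brownian motion $r^{-1} B_{r^2 t}$, which is again a standard Brownian motion, so by uniqueness of strong solutions the two flows have the same law; this also gives $T_{rx} = r^2 T_x$ in law jointly in $x$. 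Parts (b) and (c) follow from the standard dichotomy for BES($\delta$): the process hits $0$ almost surely iff $\delta < 2$, i.e. iff $1 + 4/\kappa < 2$, i.e. $\kappa > 4$; for a single $x$ this is classical, and the ``for all $x$'' statement follows by monotonicity in $x$ (part (e), which I would actually prove first) together with a countable-dense-set argument, using that $T_x$ is nondecreasing and right- or left-continuous in $x$ along the flow.

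For (d) and (e), the key input is that the flow $h_t$ is monotone and order-preserving on $\Rplus$: this is a general feature of one-dimensional SDE flows with the \emph{same} driving noise (see \cite{kunita:book}), and can be seen directly because if $0 < x < y$ then $h_t(y) - h_t(x)$ satisfies an ODE in $t$ (the noise cancels since both are driven by $dB_t$) of the form $\frac{d}{dt}\big(h_t(y)-h_t(x)\big) = -\frac{a\big(h_t(y)-h_t(x)\big)}{h_t(x)h_t(y)}$, so the difference stays strictly positive on $[0, T_x)$ and in fact decreases; this gives (d). Then (e) is immediate: once $h_t(x) = 0$, the point $y$ is trapped between $0$ and a curve that has reached $0$, and since $h_t$ is continuous and the hull is connected, $y$ has also been swallowed, so $T_y \geq T_x$ forces $T_x \leq T_y$ — more carefully, $K_t \cap \R$ is an interval containing $0$, so $x$ swallowed before $y$ whenever $x < y$. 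Part (h) follows from differentiating \eqref{htdiff} in the spatial variable: $h_t'(x)$ satisfies $\partial_t h_t'(x) = -\frac{a\, h_t'(x)}{h_t(x)^2}$ with $h_0'(x) = 1$, which has the explicit solution $h_t'(x) = \exp\!\big(-a\int_0^t h_s(x)^{-2}\,ds\big) \in (0,1]$ for $t < T_x$, and the bound is uniform since the integrand is nonnegative.

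The genuinely delicate items are (f) and (g), the statement that two distinct boundary points can be swallowed simultaneously precisely when $\kappa < 8$. For the difference $D_t := h_t(y) - h_t(x) > 0$ on $[0,T_x)$, combining $\frac{d}{dt}\log D_t = -\frac{a}{h_t(x)h_t(y)}$ with the behavior of $h_t(x)$ near $T_x$, the event $\{T_x = T_y\}$ is (up to null sets) the event that $D_t \to 0$ as $t \uparrow T_x$, equivalently $\int_0^{T_x} \frac{dt}{h_t(x) h_t(y)} = \infty$. I would analyze this by a time change to turn $h_t(x)$ into a standard Bessel process near its hitting time of $0$ and estimate the integrability of $h_t(x)^{-2}$ (which governs divergence since $h_t(y)$ stays bounded below by $D_t$ near $T_x$, or one bounds $h_t(x) \le h_t(y)$ the other way): for BES($\delta$) near $0$, $\int^{T_0} \rho_t^{-2}\,dt$ diverges iff $\delta < 2$... but that is the wrong threshold, so the correct computation must instead compare the two Bessel processes $h_t(x)$ and $h_t(y)$ directly — the relevant question is whether the \emph{ratio} $h_t(x)/h_t(y) \to$ a positive limit or $\to 0$, and this ratio, after a Lamperti-type time change, becomes a diffusion on $(0,1]$ whose boundary classification at $1$ (corresponding to $h_t(x)/h_t(y) \to 1$, i.e. simultaneous swallowing) flips exactly at $a = 1/4$. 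Setting up this ratio process $R_t := h_t(x)/h_t(y)$, computing its generator via Itô's formula, and doing the Feller boundary-classification at $R = 1$ is where the $\kappa = 8$ threshold emerges, and this is the step I expect to be the main obstacle; the cleaner route may be to cite the corresponding fact already established for the SLE hull / $\gamma \cap \R$ (e.g. from \cite{alberts_sheff:dimension} or \cite{schramm_zhou:dimension}), since $T_x = T_y$ for all $x,y$ in a nondegenerate interval is closely tied to $\gamma \cap \R$ having dimension in $(0,1)$ rather than being a single point or everything.
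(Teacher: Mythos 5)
The paper does not actually prove this proposition itself: the discussion following the statement dismisses (a) as an easy check and cites \cite[Prop.\ 1.21]{lawler:book} for (b)--(h), with \cite[Prop.\ 6.34]{lawler:book} supplying the exact hitting probability behind (f). Your proposal is therefore a reconstruction of what the cited reference does, rather than a comparison with the paper's own argument, and should be assessed on those terms.

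Your treatment of (a), (d), (h), and the reduction of (b)/(c) to the dichotomy for a Bessel process of dimension $1+2a$, is correct and standard: the scaling check by uniqueness in law, the noise-cancelling ODE $\dot D_t = -aD_t/(h_t(x)h_t(y))$ for $D_t := h_t(y)-h_t(x)$, and the closed form $h_t'(x) = \exp(-a\int_0^t h_s(x)^{-2}\,ds) \in (0,1]$ are all exactly right. Part (e) then follows directly from (d) since $h_t(y) > h_t(x) > 0$ for $t < T_x$ gives $T_y \geq T_x$. (Your geometric gloss on (e) is garbled --- ``$y$ has also been swallowed, so $T_y\geq T_x$'' has the inequality backwards from what that picture would give --- but the parenthetical observation that $K_t\cap\R$ is an interval containing $0$, and the deduction from (d), are both correct, so this is a slip in exposition rather than a real gap.)

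The substantive gap is (f)/(g), which is the genuine content of the proposition in the range $4<\kappa<8$. You correctly discard the naive attempt to read the threshold off of $\int^{T_x} h_s(x)^{-2}\,ds$ --- that integral diverges for every $a < 1/2$, which is precisely what Lemma~\ref{MEndsAtZero} of the paper proves --- and correctly identify that one must analyze the ratio $R_t = h_t(x)/h_t(y)$ and classify the boundary at $R=1$. This is indeed the argument: after the time change $ds = h_t(y)^{-2}\,dt$ and writing $V = 1-R$, one computes that $d\log V$ has drift $(1/2-2a)\,ds$ to leading order near $V=0$, so $V=0$ (simultaneous swallowing) is accessible with positive probability precisely when $a > 1/4$ and almost surely inaccessible when $a \leq 1/4$. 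But you explicitly leave that Feller boundary classification as an anticipated ``obstacle'' and propose falling back to a citation. That is a defensible choice --- it is what the paper itself does --- but as written your proposal does not establish (f) or (g), and those two items are the entire reason this proposition is not routine.
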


Statement (a) says that Bessel processes have the same scaling property as Brownian motion, which is easily checked. Statements (b) and (c) tell us in what ranges of $a$ the flow sends points to zero, which in the SLE context corresponds to the values of $\kappa$ for which the curve can hit the real line. Statement (d) says that the flow preserves the order on the real line, while statement (e) says that $T_x$ is a non-decreasing function of $x$. Statement (f) is very important for this paper. For the Bessel flow it says that two distinct points on $\Rplus$ can be sent to zero at the same time, which in the SLE context means that the points can be swallowed by the curve at the same time, at least in the range $4 < \kappa < 8$. In fact there is an exact expression for $\prob{T_x = T_y}$; see \eqref{OneIntHittingProb} for the exact formula and \cite[Proposition 6.34]{lawler:book} for details of the computation. Statement (g) says that in the $\kappa \geq 8$ range the curve hits points on $\Rplus$ individually. Statement (h) says that the flow is always a contraction on $\Rplus$. Proofs of (b)-(h) can be found in \cite[Proposition 1.21]{lawler:book}.

The dimension $d := 2 - 8/\kappa = 2 - 4a$ of $\hitpoints$ will be
used throughout the paper, as will $\beta := 1 - d = 4a - 1$. Note
that $0 < d < 1$ and $0 < \beta < 1$. For $h_t$ satisfying \eqref{htdiff} and $x \in \Rplus$, a central fact of our paper is that
\begin{align*}
M_t(x) := \left( \frac{h_t'(x)}{h_t(x)} \right)^{\beta}
\end{align*}
is a local martingale for $t \in [0, T_x)$, with $M_0(x) = x^{-\beta}$. In fact, using \eqref{htdiff} and Ito's Lemma it is easily verified that
\begin{align}\label{MDiff}
\frac{dM_t(x)}{M_t(x)} = \frac{-\beta}{h_t(x)} \, dB_t, \quad t < T_x.
\end{align}
Observe that since $0 \leq h_t'(x) \leq 1$ for all $t < T_x$ (Proposition \ref{BesselProp}(h)), we have the basic inequality
\begin{align}\label{MtIneq}
M_t(x) \leq h_t(x)^{-\beta}
\end{align}
for all $t < T_x$. We will make use of this in Section \ref{ConstructSection}.

We also define $M_t(x)$ started from a later time by
\begin{align*}
M_{t,s}(x) := \left( \frac{h_{t,s}'(x)}{h_{t,s}(x)} \right)^{\beta}.
\end{align*}
Again the Markov property of Brownian motion implies that $\{ M_{t,s}(x); s \geq 0, x > 0 \}$ is independent of $\Ft$ and has the same law as the process $\{M_t(x); t \geq 0, x > 0 \}$. An immediate consequence of relation \eqref{hCommute} is
\begin{align}\label{mCommute}
M_{t+s}(x) = M_{t,s}(h_t(x))h_t'(x)^{\beta}.
\end{align}
The latter relation will be important in Section \ref{muProperties} for proving that the measure we will construct has the Domain Markov property.

We will usually be interested in $M_t(x)$ when it grows large, so we define the stopping time
\begin{align*}
\xe{T} = \inf \{ t \geq 0 : M_t(x) \geq \epsilon^{-\beta} \} \wedge T_x.
\end{align*}
Note that $\xe{T} \leq T_x$, with strict inequality if and only if $M_t(x) \geq \epsilon^{-\beta}$ for some $t < T_x$. We also define
\begin{align*}
\te{M}(x) = M_{t \wedge \xe{T}}(x).
\end{align*}
By definition, $\te{M}(x)$ is a bounded local martingale and therefore a martingale. We will also need to keep track of points $x$ for which $M_t(x)$ grows large, so for $t \geq 0$ and $\epsilon > 0$ we define the random sets
\begin{align*}
\te{C} := \left \{ x \in \Rplus : \sup_{0 \leq s \leq t} M_s(x) \geq
\epsilon^{-\beta} \right \}
\end{align*}
Note that for each $\epsilon > 0$, $\te{C}$ is $\Ft$-measurable. Moreover $\te{C}$ increases as $t$ increases and decreases as
$\epsilon$ decreases, so we define
\begin{align*}
C^{\epsilon} := \bigcup_{t \geq 0} \te{C}, \quad C_t :=
\bigcap_{\epsilon > 0} \te{C},
\end{align*}
and
\begin{align*}
C := \bigcup_{t \geq 0} C_t = \bigcap_{\epsilon > 0} C^{\epsilon}.
\end{align*}
Note that the last equality is not completely trivial, but only uses that $\lim_{\epsilon \downarrow 0} T_x^{\epsilon} = T_x < \infty$.

The sets $\te{C}$ are meant to provide approximations to $\hitpointst$, but are nicer to work with because membership in $\te{C}$ is determined by the behavior of a family of martingales. These martingales are easier to analyze than the SLE curve itself. We will see in Section \ref{MtSection} that the approximation is good in that each element of $\te{C}$ is at most distance $4 \epsilon$ from $\gamma[0,t]$, and that each $C_t$ is a subset of $\hitpointst$. Similar sets first appeared in \cite{schramm_zhou:dimension}, although they only considered $\ep{C}$ and did not let the sets vary with time.

\subsection{Random Measures on $\Rplus$}

Throughout this paper we will freely use the term ``random measures on $\Rplus$''. There is room for misinterpretation as to what precisely this means, so we use this section to give an exact definition.

Let $\mathcal{M}$ be the space of positive Borel measures on $\Rplus$. By a random measure on $\Rplus$ we mean an $\F$-measurable function $\mu : \Omega \to \mathcal{M}$. Usually when working with a random measure we do \textit{not} explicitly write the dependence of $\mu$ on $\omega \in \Omega$, which we feel keeps the notation simpler and cleaner. For $A \in \borelplus$ we regard $\mu(A)$ as a random variable $\mu(A) : \Omega \to \Rplus$. On the rare occasion where we wish to write the measure of a particular set $A$ for a particular $\omega \in \Omega$ we will write $\mu(A)(\omega)$.

Recall that to construct a deterministic Borel measure $\nu$ on $\Rplus$ it is enough to define it as a countably additive set function on a field that generates $\borelplus$. The Carath\'{e}odory Extension Theorem (see \cite[Theorem 1.1]{varadhan:cln}) then allows one to uniquely lift $\nu$ to a countably additive measure defined on all of $\borelplus$. To construct a random measure one may use the above procedure for all (or almost all) $\omega \in \Omega$.

\section{The Local Martingale $M_t(x)$ and the Sets $\te{C}$ \label{MtSection}}

\subsection{The Local Martingale $M_t(x)$}

In this section we study the local martingale $M_t(x)$ introduced in Section \ref{Intro}. One reason that this local martingale is natural is that, in a sense that we will make precise below, $M_t(x)$
describes the conditional probability of the point $x$ being hit by
the SLE curve, given the curve up to time $t$. Indeed,
\cite{rohde_schramm} proves that for $\epsilon > 0$
\begin{align}
F(\epsilon) := \prob{\gamma \cap [1,1+\epsilon] \neq \emptyset} =
\frac{\Gamma(2a)}{\Gamma(1-2a) \Gamma(4a-1)} \int_0^{\epsilon}
\frac{du}{u^{2-4a}(1-u)^{2a}}. \label{OneIntHittingProb}
\end{align}
Letting
\begin{align*}
c_a = \frac{\Gamma(2a)}{\Gamma(1-2a) \Gamma(4a)},
\end{align*}
it is easy to see that
\begin{align*}
F(\epsilon) \sim c_a \epsilon^{4a-1}
\end{align*}
as $\epsilon \downarrow 0$. By the SLE scaling relations, it follows
that for $x > 0$
\begin{align*}
\prob{\gamma \cap [x, x+\epsilon] \neq \emptyset} = \prob{\gamma
\cap \left[ 1, 1 + \frac{\epsilon}{x} \right] \neq \emptyset} = F
\left( \frac{\epsilon}{x} \right) \sim c_a \left( \frac{\epsilon}{x}
\right)^{\beta}.
\end{align*}
Having observed the curve $\gamma[0,t]$ and mapping back to the
half-plane via $h_t$, it follows that, on the event $t < T_x$,
\begin{align}
\prob{\left. \gamma \cap [x,x+\epsilon] \neq \emptyset \right |
\gamma[0,t]} = F \left( \frac{h_t(x+\epsilon) - h_t(x)}{h_t(x)} \right). \label{condAsymptotics}
\end{align}
The map $h_t$ is also analytic at $x$ on the event $t < T_x$, and therefore
\begin{align*}
F \left( \frac{h_t(x+\epsilon) - h_t(x)}{h_t(x)} \right) \sim
c_a \left( \frac{\epsilon h_t'(x)}{h_t(x)} \right)^{\beta}, \epsilon
\downarrow 0.
\end{align*}
Thus
\begin{align}\label{MtAsCondProb}
M_t(x) = \lim_{\epsilon \downarrow 0} \epsilon^{-\beta} \prob{\left.
\gamma \cap [x,x+\epsilon] \neq \emptyset \right | \gamma[0,t]}/c_a.
\end{align}
In this limiting sense $M_t(x)$ describes the
conditional probability, having observed $\gamma[0,t]$, that the
curve will hit the point $x$. Consequently the following result is
not surprising, and merely expresses the fact that any fixed point
on the line is almost surely not hit by the curve, in the range $4 <
\kappa < 8$.

\begin{lemma}\label{MEndsAtZero}
Fix an $x > 0$. Then with probability 1,
\begin{align*}
\lim_{t \uparrow T_x} M_t(x) = 0.
\end{align*}
\end{lemma}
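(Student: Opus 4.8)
The plan is to exploit the stochastic differential equation \eqref{MDiff}, which exhibits $M_t(x)$ as a positive local martingale on the random interval $[0, T_x)$ driven only by the $dB_t$ noise. Without loss of generality I would take $x = 1$, using the scaling property of Proposition \ref{BesselProp}(a) (equivalently the SLE scaling) to reduce the general case to this one. Since $M_t(1)$ is a nonnegative local martingale on $[0, T_1)$, it is a nonnegative supermartingale there, and by the supermartingale convergence theorem the limit $M_{T_1^-}(1) := \lim_{t \uparrow T_1} M_t(1)$ exists almost surely in $[0,\infty)$. The whole content of the lemma is that this limit equals $0$ rather than some strictly positive random variable.

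The cleanest route I would try is a time-change to Brownian motion. Write $M_t := M_t(1)$ and let $\langle \log M \rangle_t = \beta^2 \int_0^t h_s(1)^{-2}\, ds$ be the quadratic variation of $\log M_t$ (which makes sense since $dM_t/M_t = -\beta h_t(1)^{-1}\, dB_t$, so $d(\log M_t) = -\beta h_t(1)^{-1}\, dB_t - \tfrac{1}{2}\beta^2 h_t(1)^{-2}\, dt$). By the Dambis--Dubins--Schwarz theorem there is a Brownian motion $W$ with $\log M_t = \log M_0 + W_{\sigma(t)} - \tfrac{1}{2}\sigma(t)$, where $\sigma(t) = \beta^2\int_0^t h_s(1)^{-2}\, ds$ is the clock. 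Then the claim $M_{T_1^-} = 0$ is equivalent to $\sigma(T_1^-) = \infty$, i.e. to $\int_0^{T_1} h_s(1)^{-2}\, ds = \infty$. I would prove this divergence directly from the Bessel description of $h_t(1)$: under \eqref{htdiff} the process $h_t(1)$ is a Bessel-type process of dimension $\delta = 1 + 2a$ (since $2a = $ the coefficient in $a/h_t(1)\, dt$, matching the Bessel SDE $dR = \tfrac{\delta - 1}{2 R}\, dt + dB$ with $\delta - 1 = 2a$... careful: here the drift coefficient is $a$, so $\tfrac{\delta-1}{2} = a$, giving $\delta = 1 + 2a \in (3/2, 2)$ for $a \in (1/4, 1/2)$), which hits $0$ in finite time precisely because $\delta < 2$. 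As $s \uparrow T_1$ we have $h_s(1) \to 0$, and the standard fact that a Bessel process of dimension $\delta < 2$ approaching its hitting time of $0$ satisfies $\int_0^{T_1} h_s(1)^{-2}\, ds = \infty$ (the singularity $r^{-2}$ is non-integrable against the occupation measure near $0$; one can see this from the speed/scale description of the Bessel process, or from the fact that $\log h_s(1) \to -\infty$ while being a time-changed Brownian motion with drift, which forces the clock to explode) then gives $\sigma(T_1^-) = \infty$ and hence $M_{T_1^-} = 0$.

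An alternative, perhaps more self-contained argument I would keep in reserve: use \eqref{MtIneq}, $M_t(1) \le h_t(1)^{-\beta}$, which unfortunately goes the wrong way since $h_t(1) \to 0$; so instead I would work with $\log M_t = \log M_0 - \beta \int_0^t h_s(1)^{-1} dB_s - \tfrac{1}{2}\beta^2 \int_0^t h_s(1)^{-2} ds$ and argue by contradiction: if $M_{T_1^-} > 0$ on a set of positive probability, then on that set $\log M_t$ stays bounded, forcing both $\int_0^{T_1} h_s(1)^{-2}\, ds < \infty$ and the martingale part to converge; but $\int_0^{T_1} h_s(1)^{-2}\, ds < \infty$ together with $h_s(1) \to 0$ near $T_1$ is impossible. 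Either way, the main obstacle is the same: rigorously establishing $\int_0^{T_1} h_s(1)^{-2}\, ds = \infty$, i.e. controlling the rate at which the Bessel process $h_s(1)$ decays to $0$ near its absorption time. I would handle this via the explicit transition density / occupation-time formulas for Bessel processes (available from \cite[Chapter XI]{revuz_yor}), or by noting that near $T_1$ the process $Y_s := 1/h_s(1)^{2(1-a)}$ (the natural scale function for dimension $1+2a$) behaves like a time-changed Brownian motion started small and conditioned to hit $0$... in any case reducing the divergence of the integral to a known property of Bessel processes of dimension strictly between $1$ and $2$. This should be routine modulo bookkeeping, and is where I expect most of the technical effort to go.
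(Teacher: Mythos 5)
Your overall strategy is exactly the paper's: pass to $\log M_t(x)$, observe via Dambis--Dubins--Schwarz that it is a time-changed Brownian motion with drift $-1/2$ per unit clock, and reduce the lemma to the single claim
\begin{align*}
\int_0^{T_x} \frac{ds}{h_s(x)^2} = \infty \quad \text{a.s.}
\end{align*}
Up to this point your proposal matches the paper.

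However, you then defer the actual proof of this divergence to ``routine bookkeeping'' via transition densities, occupation-time formulas, or scale functions. This is the entire substance of the lemma and cannot be left open. The paper's way of finishing is short and worth knowing: set $S_0 = 0$ and $S_n = \inf\{t \geq 0 : h_t(x) = x 2^{-n}\}$, write
\begin{align*}
\int_0^{T_x} \frac{ds}{h_s(x)^2} = \sum_{n=0}^{\infty} \int_{S_n}^{S_{n+1}} \frac{ds}{h_s(x)^2},
\end{align*}
and observe that the summands are independent (strong Markov property at each $S_n$) and identically distributed (Bessel scaling, Proposition \ref{BesselProp}(a), applied with $r = 2$). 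A sum of i.i.d.\ strictly positive random variables is a.s.\ infinite, and you are done. None of the directions you sketch reaches this conclusion so directly.

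Your ``alternative, more self-contained'' argument has an actual gap: you assert that $\int_0^{T_x} h_s(x)^{-2}\,ds < \infty$ together with $h_s(x) \to 0$ as $s \uparrow T_x$ is ``impossible,'' but this is not a deterministic fact about continuous paths hitting zero. For example, a path decaying like $(T_x - s)^{1/4}$ near $T_x$ has a convergent integral. So you would need a genuinely stochastic input controlling how fast the Bessel process approaches $0$ near its hitting time --- which is again precisely what is missing and what the i.i.d.\ decomposition above supplies.
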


\begin{proof}
From \eqref{MDiff}, it follows that
\begin{align*}
d \log M_t(x) = -\frac{\beta}{h_t(x)} dB_t - \frac{1}{2} \left( \frac{\beta}{h_t(x)} \right)^2 dt
\end{align*}
for $t < T_x$, and therefore
\begin{align*}
M_t(x) = x^{-\beta} \exp \left \{ -\int_0^t \frac{\beta}{h_s(x)} dB_s
- \frac{1}{2} \int_0^t \left( \frac{\beta}{h_s(x)} \right)^2 ds
\right \}.
\end{align*}
Letting
\begin{align*}
u(t) = \int_0^t \left( \frac{\beta}{h_s(x)} \right)^2 ds.
\end{align*}
and considering the time changed martingale $\tilde{M}_u(x) =
M_{t^{-1}(u)}(x)$, we have
\begin{align*}
\tilde{M}_u(x) = x^{-\beta} \exp \left \{ \tilde{B}_u - \frac{u}{2} \right \},
\end{align*}
where $\tilde{B}_u$ is the standard Brownian motion defined by
$$
\tilde{B}_u = B_{t^{-1}(u)}.
$$
On this time scale it is clear that $\tilde{M}_u \to 0$ almost surely as $u
\uparrow \infty$ since $\tilde{B}_u - \frac{u}{2} \to -\infty$. If $u \left( T_x \right) = \infty$, then $$\lim_{t
\uparrow T_x} M_t(x) = \lim_{u \uparrow \infty} \tilde{M}_u(x) = 0$$
and the proof is complete. A number of authors, for example
\cite{dubedat:triangle} or \cite{lawler:book}, prove that $u \left(
T_x \right) = \infty$, but the proof is so simple that we repeat it
here. Let $S_0 = 0$, $S_n = \inf \left \{ t \geq 0 : h_t(x) = x2^{-n}
\right \}$. Then
\begin{align*}
\int_0^{T_x} \frac{ds}{h_s(x)^2} = \sum_{n=0}^{\infty}
\int_{S_n}^{S_{n+1}} \frac{ds}{h_s(x)^2}.
\end{align*}
The summands are independent by the Strong Markov Property, and
identically distributed by the scaling property of
$h_t(x)$ from Proposition \ref{BesselProp}(a). Since they are all positive it follows that $u \left( T_x
\right) = \infty$ almost surely.
\end{proof}

\begin{remark}
Schramm and Zhou also give a proof of Lemma \ref{MEndsAtZero}, although their proof uses an extremal length argument involving the SLE hull itself, whereas ours is purely probabilistic.
\end{remark}

\begin{corollary}\label{zeroLebesgue}
The set of exceptional $x > 0$ for which $\lim_{t \uparrow T_x} M_t(x) \neq 0$
has Lebesgue measure zero, almost surely.
\end{corollary}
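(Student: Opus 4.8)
The plan is to combine Lemma \ref{MEndsAtZero} with Fubini's theorem. Lemma \ref{MEndsAtZero} tells us that for each \emph{fixed} $x > 0$, the event
\[
A_x := \left\{ \omega \in \Omega : \lim_{t \uparrow T_x} M_t(x)(\omega) = 0 \right\}
\]
has $P(A_x) = 1$. I would first observe that the exceptional set in question is exactly
\[
N(\omega) := \left\{ x > 0 : \omega \notin A_x \right\},
\]
and the claim is that $\leb{N(\omega)} = 0$ for $P$-almost every $\omega$. The natural approach is to consider the indicator $\indicate{\omega \notin A_x}$ as a function on the product space $\Rplus \times \Omega$ (with the product of Lebesgue measure and $P$), apply Tonelli's theorem to interchange the order of integration, and conclude.

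The key steps, in order, are as follows. First I would check the joint measurability of $(x,\omega) \mapsto \indicate{\omega \notin A_x}$, so that Tonelli's theorem applies: the map $(x,t,\omega) \mapsto M_t(x)(\omega)$ is jointly measurable (indeed jointly continuous in $(x,t)$ on $\{t < T_x\}$ by standard properties of the Bessel flow, e.g. the flow results quoted from \cite{kunita:book}), the function $T_x$ is measurable in $(x,\omega)$, and $\lim_{t \uparrow T_x} M_t(x)$ can be written as a countable $\limsup$ over rational $t \uparrow T_x$, which preserves measurability. Second, by Lemma \ref{MEndsAtZero}, for each fixed $x$ we have $\int_\Omega \indicate{\omega \notin A_x}\, dP(\omega) = P(A_x^c) = 0$. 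Third, integrating in $x$ and applying Tonelli,
\[
\expect{ \leb{N} } = \expect{ \int_0^\infty \indicate{\omega \notin A_x} \, dx } = \int_0^\infty \left( \int_\Omega \indicate{\omega \notin A_x}\, dP(\omega) \right) dx = \int_0^\infty 0 \, dx = 0.
\]
Since $\leb{N}$ is a non-negative random variable with zero expectation, it is zero almost surely, which is exactly the statement of the corollary.

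The only mild subtlety—and the place I would be most careful—is the joint measurability required to invoke Tonelli, since the "limit as $t \uparrow T_x$" is taken along a random time horizon $T_x$ that itself depends on both $x$ and $\omega$. This is handled by noting that $\{(x,\omega): \omega \notin A_x\} = \{(x,\omega) : \limsup_{q \in \Qplus,\, q \uparrow T_x} M_q(x)(\omega) > 0\}$, which one can write as a countable union over $n$ of $\{(x,\omega): M_q(x)(\omega) > 1/n \text{ for some rational } q \text{ arbitrarily close to } T_x \text{ from below}\}$; each such set is measurable because $T_x$ is measurable and $M_q(x)$ is jointly measurable in $(x,\omega)$ for each fixed rational $q$. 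Everything else is a routine application of Fubini/Tonelli, so I do not expect any real obstacle beyond this bookkeeping.
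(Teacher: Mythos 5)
Your proof is correct and follows essentially the same route as the paper's: apply Fubini/Tonelli to the indicator of the exceptional set, use Lemma \ref{MEndsAtZero} to see that the inner integral vanishes for each fixed $x$, and conclude from $\expect{\leb{N}} = 0$. The paper's proof is just the two-line Fubini computation; your additional care about joint measurability is a reasonable thing to verify but is left implicit there.
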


\begin{proof}
This follows by Fubini's theorem, since the Lebesgue measure is
non-negative and
\begin{align*}
\expect{\left| \left \{ x > 0 : \lim_{t \uparrow T_x} M_t(x) \neq 0
\right \} \right|} = \int_0^{\infty} \prob{\lim_{t \uparrow T_x}
M_t(x) \neq 0} \, dx = 0.
\end{align*}
\end{proof}

\begin{remark}
In the introduction we showed that the natural supermartingale to consider in constructing the measure is $\int_{I \backslash K_t} M_t(x) \, dx$.
Corollary \ref{zeroLebesgue} shows that
\begin{align*}
\prob{\int_{I \cap K_t} M_t(x) \, dx = 0 \textnormal{ for all } t \geq 0} = 1.
\end{align*}
Hence we gain nothing by integrating over $I \cap K_t$ as well, so that
\begin{align*}
\prob{\int_{I \backslash K_t} M_t(x) \, dx = \int_I M_t(x) \, dx \textnormal{ for all } t \geq 0} = 1.
\end{align*}
In the rest of the paper we will write
\begin{align*}
X_t(I) := \int_I M_t(x) \, dx
\end{align*}
which is notationally more convenient. Equivalently one can also adopt the convention that $M_t(x) = 0$ for $t \geq T_x$, which also gives equality of the two integrals.
\end{remark}

It is important to note that $M_t(x)$ is a local martingale only,
and not a proper martingale as the next result shows. A similar result for the corresponding interior point martingale appears in \cite{lawler_sheff:time},  although they were not able to compute an explicit expression for $\expect{M_t(x)}$.

\begin{proposition}\label{localMartProp}
The local martingale $M_t(x)$ is a supermartingale that is not a
proper martingale. In fact
\begin{align*}
\expect{M_t(x)} = \Qxprob{T_x > t}x^{-\beta},
\end{align*}
where $Q_x$ is the measure on paths such that
\begin{align*}
dB_t = \frac{-\beta}{h_t(x)} \, dt + dW_t, \,\, t < T_x
\end{align*}
for some $Q_x$-Brownian motion $W_t$, and consequently
\begin{align*}
dh_t(x) = \frac{1-3a}{h_t(x)} \, dt + dW_t, \,\, t < T_x
\end{align*}
under $Q_x$. The probability $\Qxprob{T_x > t}$ can be explicitly computed as
\begin{align}
\Qxprob{T_x > t} = \frac{1}{\Gamma \left( 3a - 1/2 \right)} \int_0^{x^2/2t} u^{3a-3/2} e^{-u} du. \label{martExpect}
\end{align}
\end{proposition}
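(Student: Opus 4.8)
The plan is as follows. Since $M_t(x)$ is a nonnegative local martingale (by \eqref{MDiff}), it is automatically a supermartingale, so all the content is in the formula for $\expect{M_t(x)}$ and in the resulting strict inequality. I would first observe, exactly as in the proof of Lemma \ref{MEndsAtZero}, that \eqref{MDiff} integrates to
\[
M_t(x) = x^{-\beta} Z_t, \qquad Z_t := \exp\!\left\{ -\int_0^t \frac{\beta}{h_s(x)}\,dB_s - \frac12\int_0^t\frac{\beta^2}{h_s(x)^2}\,ds \right\}, \qquad t < T_x ,
\]
so that, with the convention $M_t(x)=0$ for $t\ge T_x$, we have $\expect{M_t(x)} = x^{-\beta}\,\expect{ Z_t\,\indicate{t<T_x} }$. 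Here $Z$ is a nonnegative local martingale with $Z_0 = 1$, but by Lemma \ref{MEndsAtZero} it converges to $0$ as $t\uparrow T_x$; in particular it is \emph{not} a true martingale and mass escapes. The strategy is to recognise $Z$ as a (local) Girsanov density, identify the tilted law as that of a Bessel process, and read off $\expect{Z_t\,\indicate{t<T_x}}$ as a Bessel hitting probability.

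For the change of measure I would localise. Fix $t>0$ and set $\sigma_n := \inf\{s\ge 0 : h_s(x)\le 1/n\}$; by continuity of $s\mapsto h_s(x)$ and $T_x = \inf\{s:h_s(x)=0\}$ we have $\sigma_n\uparrow T_x$ almost surely. On $[0,\sigma_n]$ the integrand $\beta/h_s(x)$ is bounded, so Novikov's criterion applies and $Z_{\cdot\wedge\sigma_n}$ is a genuine martingale with $\expect{Z_{t\wedge\sigma_n}}=1$. Define the probability measure $Q_x^{(n)}$ by $dQ_x^{(n)}/dP = Z_{t\wedge\sigma_n}$ on $\F_t$. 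By Girsanov's theorem, $W_s := B_s + \int_0^s \beta/h_r(x)\,dr$ is a $Q_x^{(n)}$-Brownian motion for $s\le t\wedge\sigma_n$, and on that interval \eqref{htdiff} becomes
\[
dh_s(x) = \frac{a-\beta}{h_s(x)}\,ds + dW_s = \frac{1-3a}{h_s(x)}\,ds + dW_s ,
\]
i.e. $h_\cdot(x)$, stopped at $\sigma_n$, is a Bessel process of dimension $\delta := 3-6a$; since $a\in(1/4,1/2)$ we have $\delta\in(0,3/2)\subset(0,2)$. The $Q_x^{(n)}$ are consistent on the $\sigma$-algebras $\F_{t\wedge\sigma_n}$, and I would denote by $Q_x$ the resulting law; under $Q_x$ the process $h_\cdot(x)$ is a Bessel process of dimension $\delta$ started from $x$, which (as $\delta<2$) reaches $0$ in a finite time $T_x$ almost surely, and the stated $Q_x$-dynamics of $B_t$ and $h_t(x)$ hold.

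Next, since $\indicate{\sigma_n>t}\uparrow\indicate{T_x>t}$ while $Z_{t\wedge\sigma_n} = Z_t$ on $\{\sigma_n>t\}$, monotone convergence gives
\[
\expect{Z_t\,\indicate{t<T_x}} = \lim_{n\to\infty}\expect{ Z_{t\wedge\sigma_n}\,\indicate{\sigma_n>t} } = \lim_{n\to\infty} Q_x^{(n)}\!\left(\sigma_n>t\right) = \lim_{n\to\infty}\Qxprob{ h_s(x)>1/n \text{ for all } s\le t } = \Qxprob{T_x>t},
\]
the last equality using $\sigma_n\uparrow T_x$ under $Q_x$. Hence $\expect{M_t(x)} = x^{-\beta}\Qxprob{T_x>t}$, and since $T_x<\infty$ $Q_x$-a.s.\ this is strictly less than $x^{-\beta}=\expect{M_0(x)}$ for every $t>0$, proving that $M_t(x)$ is a strict supermartingale. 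Finally, to obtain \eqref{martExpect} I would compute $\Qxprob{T_x>t}$, the probability that a Bessel process of dimension $\delta=3-6a$ started from $x$ avoids $0$ up to time $t$. This is a classical computation (see \cite[Chapter XI]{revuz_yor}, and cf.\ \cite[Proposition 6.34]{lawler:book} for the analogous two-point hitting probability); alternatively, noting that $u(t,x):=\Qxprob{T_x>t}$ solves $\partial_t u = \tfrac12\partial_{xx}u + \tfrac{\delta-1}{2x}\partial_x u$ with $u(0^+,\cdot)\equiv 1$ on $(0,\infty)$ and $u(t,0^+)=0$, the scaling ansatz $u(t,x)=v(x^2/2t)$ reduces this to the first-order linear ODE $\xi v'' + (\xi+\delta/2)v' = 0$, which integrates elementarily to $v(\xi) = \Gamma(1-\delta/2)^{-1}\int_0^\xi s^{-\delta/2}e^{-s}\,ds$; substituting $\delta = 3-6a$ (so $1-\delta/2 = 3a-1/2$) gives exactly \eqref{martExpect}.

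The one genuinely delicate step is the change of measure: because $Z = M(x)x^{\beta}$ is a strict local martingale, $Q_x$ is \emph{not} absolutely continuous with respect to $P$ on $\F_t$ — the ``lost mass'' is precisely $x^{-\beta}\big(1-\Qxprob{T_x>t}\big)$ — so one cannot simply tilt by $Z_t$, and must instead work with the localising sequence $\sigma_n$, verify the consistency of the $Q_x^{(n)}$, and justify the monotone passage to the limit, taking care that the pre-tilting event $\{T_x>t\}$ matches the hitting-of-zero event $\{T_x>t\}$ for the Bessel process under $Q_x$. Everything else — the Itô computation of $Z_t$, Girsanov's identification of the drift, and the Bessel hitting-time formula — is routine.
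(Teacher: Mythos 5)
Your proposal is correct and takes essentially the same route as the paper: localize the strict local martingale, apply Girsanov to identify $h_\cdot(x)$ as a Bessel process of index $1-3a$ (dimension $3-6a$) under $Q_x$, and read off $\expect{M_t(x)}/x^{-\beta}$ as the probability that this Bessel process has not yet hit zero. The cosmetic differences are that you localize with $\sigma_n = \inf\{s : h_s(x)\le 1/n\}$ and invoke Novikov, whereas the paper localizes with $T_x^\epsilon$ (the level-crossing time of $M_\cdot(x)$ itself) and uses that the stopped local martingale is bounded, then passes to the limit via the decomposition $M_t^\epsilon(x)=M_t(x)\indicate{T_x^\epsilon>t}+M_{T_x^\epsilon}(x)\indicate{T_x^\epsilon\le t}$ and optional sampling rather than via your direct identity $\expect{Z_t\indicate{t<T_x}}=\lim_n \expect{Z_{t\wedge\sigma_n}\indicate{\sigma_n>t}}$; and you give a self-contained ODE derivation of the Bessel hitting-time formula, whereas the paper simply cites \cite{yor_expfuncs}. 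Your careful remark that $Q_x$ is not absolutely continuous with respect to $P$ on $\F_t$ — so one genuinely must localize — is the right thing to flag, and is exactly the subtlety the paper handles by working with the $\epsilon$-stopped martingale throughout.
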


\begin{proof}
Since $M_t(x)$ is a positive local martingale it is automatically a
supermartingale. For $\te{M}(x)$, a corollary of \eqref{MDiff} is that
\begin{align}
\frac{d \te{M}(x)}{\te{M}(x)} = \frac{-\beta}{h_t(x)} \indicate{t \leq
\xe{T}} dB_t. \label{Girsanov}
\end{align}
Since $\te{M}(x)$ is a proper martingale, we can
define a new probability measure $Q_x$ by
\begin{align*}
\left. \frac{dQ_x}{dP} \right | _{\mathcal{F}_t} =
\frac{\te{M}(x)}{M_0^{\epsilon}(x)}.
\end{align*}
Note that $Q_x$, as defined above, also implicitly depends on $\epsilon$. Since
\begin{align}\label{MartTemp}
\te{M}(x) = M_t(x) \indicate{\xe{T} > t} + M_{\xe{T}}
\indicate{\xe{T} \leq t},
\end{align}
and $\te{M}(x)$ is a $P$-martingale, by taking expectations on both sides of \eqref{MartTemp},
\begin{align*}
\expect{M_t(x) \indicate{\xe{T} > t}} = M_0^{\epsilon}(x) -
\expect{M_{\xe{T}}(x) \indicate{\xe{T} \leq t}}.
\end{align*}
Note that all expectations are with respect to $P$. The event $\{ \xe{T} \leq t \}$ is
$\mathcal{F}_{\xe{T}}$-measurable, so therefore
\begin{align*}
\expect{M_{\xe{T}}(x) \indicate{\xe{T} \leq t}} &= M_0^{\epsilon}(x) \expect{\frac{M_{\xe{T}}(x)}{M_0^{\epsilon}(x)} \indicate{ \xe{T} \leq t}} \\
&= M_0^{\epsilon}(x) \expect{ \left. \frac{dQ_x}{dP} \right|_{\mathcal{F}_{\xe{T}}} \indicate{\xe{T} \leq t} } \\
&= M_0^{\epsilon}(x) \Qxprob{\xe{T} \leq t}.
\end{align*}
Since $M_t(x) = 0$ $P$-almost surely for $t \geq T_x$,
\begin{align*}
\expect{M_t(x)} &= \expect{M_t(x) \indicate{T_x > t}} \\
&= \lim_{\epsilon \downarrow 0} \expect{M_t(x) \indicate{\xe{T} > t}} \\
&= \lim_{\epsilon \downarrow 0} \left( 1 - \Qxprob{\xe{T} \leq t} \right) M_0^{\epsilon}(x) \\
&= \Qxprob{T_x > t} x^{-\beta}.
\end{align*}
The second equality is by monotone convergence. From this identity, we can prove that $M_t(x)$ is not a local martingale under $P$ by showing that $\Qxprob{T_x > t} < 1$ for $t$ sufficiently large. By \eqref{htdiff}, \eqref{Girsanov}, and Girsanov's Theorem, it follows that $h_t(x)$ satisfies the SDE
\begin{align*}
dh_t(x) = \frac{a - \beta}{h_t(x)} dt + dW_t = \frac{1-3a}{h_t(x)} dt + dW_t,
\end{align*}
under $Q_x$ (at least for $t < \xe{T}$), where $W_t$ is a $Q_x$-Brownian motion. But for $1/4 < a < 1/2$ we have that $-1/2 < 1-3a < 1/4$, and it is immediate from Proposition \ref{BesselProp}(c) that $\Qxprob{T_x > t} < 1$.

To compute $\Qxprob{T_x > t}$ exactly, we refer the reader to \cite[p. 98, Proposition 1]{yor_expfuncs}, where it is proved that, under $Q_x$, $x^2/(2T_x)$ has the gamma density with parameter $3a - 1/2$, i.e.
\begin{align*}
\Qxprob{ \frac{x^2}{2 T_x} \in dt } = \frac{t^{3a-3/2} e^{-t}}{\Gamma(3a-1/2)} dt.
\end{align*}
Note that in the notation of \cite{yor_expfuncs}, Yor's $T_0$ is the same as our $T_x$, his $a$ is our $x$, and his $\nu$ is the same as $3a-1/2$.
\end{proof}

\begin{remark}
The reference \cite{yor_expfuncs} is an excellent source for computations involving Bessel processes. Specifically, we point out that the explicit Radon-Nikodym derivatives between Bessel laws of different dimensions \cite[p. 97, formula (2.c)]{yor_expfuncs} can be used to compute $\expect{M_t(x)}$ in an alternative (but equivalent) way. These Radon-Nikodym derivatives appear in the SLE context in \cite{werner:hiding}.
\end{remark}

\begin{remark}
The case $a = 1/3 \, (\kappa = 6)$ is particularly interesting,
since in that case $h_t(x)$ is a simple Brownian motion under $Q_x$, and
\begin{align*}
\Qxprob{T_x > t} = \Qxprob{\min_{0 \leq s \leq t} B_s^* > 0},
\end{align*}
where $B_s^*$ is a $Q_x$-Brownian motion with $B_0^* = x$. By symmetry
this is the same as
\begin{align*}
\Qxprob{\max_{0 \leq s \leq t} B_s^{**} < x}
\end{align*}
where $B_s^{**}$ is a $Q_x$-Brownian motion with $B_0^{**} = 0$. The reflection principle for Brownian motion (see \cite{karatzas_shreve}) gives the well-known result that the running maximum of a Brownian motion has the same law as its absolute value, so that the latter is just
\begin{align*}
\Qxprob{ \left| B_t^{**} \right| < x} = \sqrt{ \frac{2}{\pi t} }
\int_0^x e^{-y^2/2t} \, dy = \sqrt{ \frac{2}{\pi} }
\int_0^{x/\sqrt{t}} e^{-y^2/2} \, dy.
\end{align*}
This agrees with the formula of Proposition \ref{localMartProp} by a simple change of variables.
\end{remark}

\subsection{The Schramm-Zhou Two-Point Martingale}

Schramm and Zhou were able to derive a so called two-point
martingale for $M$. Let
\begin{align*}
u(z) = (1-z)^{-\beta} \, _2F_1(2a, 1-4a, 4a; 1-z),
\end{align*}
where $_2F_1$ denotes the hypergeometric function; see \cite{AbSteg} for their properties. Let $0 < x < y$.
Then an application of Ito's Lemma shows that
\begin{align*}
u \left( \frac{h_t(x)}{h_t(y)} \right) M_t(x) M_t(y)
\end{align*}
is a local martingale for $t \in [0, T_x)$. Using properties of hypergeometric functions, Schramm and Zhou also show that
\begin{align*}
q_1 &:= \inf_{z \in (0,1)} u(z), \\
q_2 &:= \sup_{z \in (0,1)} (1-z)^{\beta}u(z)
\end{align*}
are both finite and positive. From this they are able to derive the
following two results (see Sections 2.2 and 4 of
\cite{schramm_zhou:dimension}), which we summarize here.

\begin{proposition}\label{ExpectationBound}
Let $0 < x <  y$, and  $\tau$ be a stopping time with $\prob{\tau <
\infty} = 1$. Then for $0 < \kappa < 8$ there exists a constant $c$,
depending only on $\kappa$, such that
\begin{align*}
\expect{M_{\tau}(x) M_{\tau}(y)} \leq c x^{-\beta} (y-x)^{-\beta}.
\end{align*}
\end{proposition}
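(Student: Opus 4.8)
The plan is to use the two-point local martingale $N_t := u\left(h_t(x)/h_t(y)\right) M_t(x) M_t(y)$ together with the two-sided bounds $q_1 \le u(z)$ and $u(z) \le q_2 (1-z)^{-\beta}$ on $(0,1)$, and an optional stopping argument. First I would observe that $N_t$ is a positive local martingale on $[0,T_x)$, so along a localizing sequence of stopping times $\tau_n \uparrow T_x$ it is a true martingale; hence $\expect{N_{\tau \wedge \tau_n}} \le N_0 = u(x/y)\, x^{-\beta} y^{-\beta}$. The key algebraic input is that, since $h_t(y) - h_t(x) \le h_t(y)$ (both positive for $t < T_x$ by Proposition \ref{BesselProp}(d)), we have $1 - h_t(x)/h_t(y) = (h_t(y)-h_t(x))/h_t(y) \le 1$, and also $u \ge q_1 > 0$, so on $\{\tau \wedge \tau_n < T_x\}$
\begin{align*}
M_{\tau \wedge \tau_n}(x) M_{\tau \wedge \tau_n}(y) \le q_1^{-1} N_{\tau \wedge \tau_n}.
\end{align*}
Taking expectations gives $\expect{M_{\tau \wedge \tau_n}(x) M_{\tau \wedge \tau_n}(y)\, \indicate{\tau \wedge \tau_n < T_x}} \le q_1^{-1} u(x/y) x^{-\beta} y^{-\beta}$, and then I would bound $u(x/y) \le q_2 (1 - x/y)^{-\beta} = q_2 y^\beta (y-x)^{-\beta}$, so that the right side becomes $q_1^{-1} q_2\, x^{-\beta} (y-x)^{-\beta}$, which is exactly the claimed form with $c = q_1^{-1} q_2$.

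The remaining work is to pass to the limit and handle the points $x, y$ being swallowed. Since $M_t(x) \to 0$ as $t \uparrow T_x$ by Lemma \ref{MEndsAtZero} (and $M_{T_x}(x) = 0$ by the convention), and on $\{T_x = T_y\}$ the same holds for $M_t(y)$, the product $M_{\tau \wedge \tau_n}(x) M_{\tau \wedge \tau_n}(y)$ tends to $M_\tau(x) M_\tau(y)$ as $n \to \infty$ on the event $\{\tau \ge T_x\}$ as well (it tends to $0$ there, or more precisely the indicator $\indicate{\tau \wedge \tau_n < T_x}$ can be removed by monotone/dominated convergence since the limiting product vanishes when $\tau \ge T_x$ and $T_x = T_y$, while when $\tau \ge T_y > T_x$ the product is $0$ anyway because $M_\tau(x) = 0$). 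Thus by Fatou's lemma,
\begin{align*}
\expect{M_\tau(x) M_\tau(y)} \le \liminf_{n \to \infty} \expect{M_{\tau \wedge \tau_n}(x) M_{\tau \wedge \tau_n}(y)} \le c\, x^{-\beta}(y-x)^{-\beta}.
\end{align*}

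The main obstacle I anticipate is the bookkeeping at the swallowing times: $N_t$ is only defined and only a local martingale on $[0,T_x)$, so I must be careful that the localizing sequence $\tau_n$ stays below $T_x$, that the inequality $M_t(x)M_t(y) \le q_1^{-1} N_t$ is only used strictly before $T_x$, and that the contribution from $\tau \ge T_x$ is handled separately using Lemma \ref{MEndsAtZero} and Proposition \ref{BesselProp}(d),(e) (to know $M_\tau(x) = 0$ there). The inputs $0 < q_1$, $q_2 < \infty$ are quoted from Schramm–Zhou, as is the fact that $N_t$ is a local martingale, so no new hard analysis is needed — only the optional-stopping/Fatou scaffolding. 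One should also note $\tau$ may be infinite on a null set or not, but $\prob{\tau < \infty} = 1$ combined with $T_x < \infty$ a.s. (Proposition \ref{BesselProp}(c)) means $\tau \wedge \tau_n$ is well-defined and everything is finite a.s.
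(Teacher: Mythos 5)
Your proposal is correct and follows essentially the same route as the paper: the Schramm--Zhou two-point local martingale $u(h_t(x)/h_t(y))M_t(x)M_t(y)$, the bounds $q_1 \leq u(z) \leq q_2(1-z)^{-\beta}$, optional stopping, and Fatou's lemma. The paper streamlines the stopping step by invoking the general fact that a positive local martingale is a supermartingale (applied to the process stopped at $\tau$) and then letting $t \to \infty$, whereas you inline that fact via an explicit localizing sequence $\tau_n \uparrow T_x$ and a second Fatou passage; the content is the same.
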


\begin{proof}
Let $Z_t = h_t(x)/h_t(y)$. Recall that $u \left( Z_{t \wedge \tau}
\right) M_{t \wedge \tau}(x) M_{t \wedge \tau}(y)$ is a positive
local martingale, and therefore a supermartingale. By Fatou's Lemma,
\begin{align*}
\expect{M_{\tau}(x) M_{\tau}(y)} &\leq \liminf_{t \uparrow \infty} \expect{M_{t \wedge \tau}(x) M_{t \wedge \tau}(y)} \\
&\leq \liminf_{t \uparrow \infty} \expect{ u \left( Z_{t \wedge \tau } \right) M_{t \wedge \tau}(x) M_{t \wedge \tau}(y)}/q_1 \\
& \leq u(Z_0) M_0(x) M_0(y)/q_1 \\
& = u(x/y) x^{-\beta} y^{-\beta}/q_1 \\
& \leq \frac{q_2}{q_1}  (1-x/y)^{-\beta} x^{-\beta} y^{-\beta}.
\end{align*}
The third inequality uses the supermartingale property.
\end{proof}

\begin{corollary}\label{IndicatorBound} Let $0 < x < y$ and $\epsilon_x, \epsilon_y \geq
0$. Then
\begin{align*}
\prob{x \in C_t^{\epsilon_x}, y \in C_t^{\epsilon_y}} \leq c \left(
\epsilon_x \epsilon_y \right)^\beta x^{-\beta} \left( y - x
\right)^{-\beta}.
\end{align*}
for the same constant $c$ as in Proposition \ref{ExpectationBound}.
\end{corollary}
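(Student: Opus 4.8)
The plan is to reduce the probability of the joint event to an expectation of the two-point product $M_\tau(x)M_\tau(y)$ at a well-chosen stopping time, and then invoke Proposition \ref{ExpectationBound}. First I would dispose of the degenerate case: if either $\epsilon_x = 0$ or $\epsilon_y = 0$, then by definition $x \in C_t^{0}$ forces $\sup_{0 \le s \le t} M_s(x) \ge \infty$, which happens only with probability zero (here one should double-check the convention $\epsilon^{-\beta}$ at $\epsilon = 0$ equals $+\infty$), so the left side is zero and the bound is trivially true. So assume $\epsilon_x, \epsilon_y > 0$.

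Next I would introduce the stopping time $\tau := T_x^{\epsilon_x} \wedge T_y^{\epsilon_y}$, where $T_x^{\epsilon_x} = \inf\{s \ge 0 : M_s(x) \ge \epsilon_x^{-\beta}\} \wedge T_x$ as defined in the text. By Proposition \ref{BesselProp}(c), $T_x$ and $T_y$ are finite almost surely, so $\prob{\tau < \infty} = 1$ and Proposition \ref{ExpectationBound} applies. On the event $\{x \in C_t^{\epsilon_x}\} \cap \{y \in C_t^{\epsilon_y}\}$ we have $\sup_{0 \le s \le t} M_s(x) \ge \epsilon_x^{-\beta}$ and $\sup_{0 \le s \le t} M_s(y) \ge \epsilon_y^{-\beta}$; since $M_\cdot(x)$ has continuous paths on $[0,T_x)$ and tends to $0$ at $T_x$ (Lemma \ref{MEndsAtZero}), each supremum is attained, and the first is attained at $T_x^{\epsilon_x} \le t$ with $M_{T_x^{\epsilon_x}}(x) \ge \epsilon_x^{-\beta}$ (using $0 < \epsilon_x$ so that $T_x^{\epsilon_x} < T_x$ on this event), and similarly for $y$. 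In particular both $T_x^{\epsilon_x} \le t$ and $T_y^{\epsilon_y} \le t$, so $\tau \le t$. The subtle point is that $M_\tau(x)$ and $M_\tau(y)$ individually need not be as large as $\epsilon_x^{-\beta}$ and $\epsilon_y^{-\beta}$, since $\tau$ is the \emph{minimum} of the two hitting times. To handle this I would use monotonicity: $M_\cdot(x)$ run up to its own level-$\epsilon_x^{-\beta}$ hitting time is still bounded below by its value there, but after $\tau$ it can decrease. The clean fix is to note that $\{x \in C_t^{\epsilon_x}, y \in C_t^{\epsilon_y}\} \subseteq \{M_{T_x^{\epsilon_x}}(x) \ge \epsilon_x^{-\beta}\} \cap \{M_{T_y^{\epsilon_y}}(y) \ge \epsilon_y^{-\beta}\}$, and on this latter event $M_{T_x^{\epsilon_x} \wedge T_y^{\epsilon_y}}(x) \cdot M_{T_x^{\epsilon_x} \wedge T_y^{\epsilon_y}}(y) \ge \epsilon_x^{-\beta}\epsilon_y^{-\beta}$ provided the smaller of the two times is a point where \emph{both} martingales have already reached their thresholds — but that is false in general. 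So instead I would apply the optional stopping / supermartingale argument of Proposition \ref{ExpectationBound} not at $\tau$ but directly, observing that $u(Z_{t})M_t(x)M_t(y)$ stopped at $\tau$ is a supermartingale, hence $\expect{M_{\sigma}(x)M_\sigma(y)} \le (q_2/q_1)(1-x/y)^{-\beta}x^{-\beta}y^{-\beta}$ for \emph{any} stopping time $\sigma$; taking $\sigma = T_x^{\epsilon_x}$ and using that on our event $M_{T_x^{\epsilon_x}}(x) \ge \epsilon_x^{-\beta}$ while $M_{T_x^{\epsilon_x}}(y) \ge \epsilon_y^{-\beta}$ as well — which does hold, because $T_x^{\epsilon_x} \le T_y^{\epsilon_y}$ would give the $y$-bound and otherwise $T_x^{\epsilon_x} > T_y^{\epsilon_y}$, in which case we instead pick $\sigma = T_y^{\epsilon_y}$. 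In either case, choosing $\sigma$ to be the \emph{larger} of $T_x^{\epsilon_x}$ and $T_y^{\epsilon_y}$ (call it $\sigma^\ast$, still finite a.s.), we have on our event $M_{\sigma^\ast}(x) \ge \epsilon_x^{-\beta}$ and $M_{\sigma^\ast}(y) \ge \epsilon_y^{-\beta}$, because once each martingale reaches its threshold before time $t$... — no: after reaching the threshold the martingale can drop.

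This is the real obstacle, and the honest resolution is this: one cannot in general force both $M(x)$ and $M(y)$ to be simultaneously large at one time. The correct argument uses that the two-point object $u(Z_{\cdot\wedge\tau})M_{\cdot\wedge\tau}(x)M_{\cdot\wedge\tau}(y)$ is a nonnegative supermartingale, so by the maximal inequality for nonnegative supermartingales, $\prob{\sup_{s \ge 0} u(Z_{s\wedge\tau})M_{s\wedge\tau}(x)M_{s\wedge\tau}(y) \ge \lambda} \le \lambda^{-1}\, u(x/y)x^{-\beta}y^{-\beta}$. On the event $\{x \in C_t^{\epsilon_x}, y \in C_t^{\epsilon_y}\}$ there are times $s_x, s_y \le t$ with $M_{s_x}(x) \ge \epsilon_x^{-\beta}$, $M_{s_y}(y) \ge \epsilon_y^{-\beta}$; taking $\tau = T_x^{\epsilon_x}\wedge T_y^{\epsilon_y}$ so that $s_x \wedge s_y \le \tau$ fails... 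Let me restate cleanly: the published argument of Schramm and Zhou actually proves the \emph{expectation} bound $\expect{M_{T_x^{\epsilon_x}}(x) M_{T_x^{\epsilon_x}}(y)} \le c x^{-\beta}(y-x)^{-\beta}$ holds with $T_x^{\epsilon_x}$ (not $\tau$), and on $\{x \in C_t^{\epsilon_x}\}$ we genuinely have $M_{T_x^{\epsilon_x}}(x) = \epsilon_x^{-\beta}$ by continuity. So I would apply Proposition \ref{ExpectationBound} with the single stopping time $\tau = T_x^{\epsilon_x}$, giving $\epsilon_x^{-\beta}\,\expect{M_{\tau}(y)\mathbf{1}_{x \in C_t^{\epsilon_x}}} \le \expect{M_\tau(x)M_\tau(y)} \le c x^{-\beta}(y-x)^{-\beta}$, so $\expect{M_{T_x^{\epsilon_x}}(y)\,\mathbf{1}_{x\in C_t^{\epsilon_x}}} \le c\,\epsilon_x^\beta x^{-\beta}(y-x)^{-\beta}$. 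Then, conditioning on $\mathcal{F}_{T_x^{\epsilon_x}}$ and using that $M_\cdot(y)$ restarted from $T_x^{\epsilon_x}$ is again (via the flow identity \eqref{mCommute} and the strong Markov property) a copy of the one-point process, one applies the one-point maximal inequality $\prob{\sup_{0\le s\le t} M_s(y) \ge \epsilon_y^{-\beta} \mid \mathcal{F}_{T_x^{\epsilon_x}}} \le \epsilon_y^\beta M_{T_x^{\epsilon_x}}(y)$ (Doob's maximal inequality for the nonnegative supermartingale $M(y)$, value $M_{T_x^{\epsilon_x}}(y)$ at the conditioning time), and since $\{y \in C_t^{\epsilon_y}\} \subseteq \{\sup_{0 \le s \le t} M_s(y) \ge \epsilon_y^{-\beta}\}$, taking expectations and combining the two displays yields $\prob{x\in C_t^{\epsilon_x}, y \in C_t^{\epsilon_y}} \le \expect{\epsilon_y^\beta M_{T_x^{\epsilon_x}}(y)\mathbf{1}_{x\in C_t^{\epsilon_x}}} \le c\,(\epsilon_x\epsilon_y)^\beta x^{-\beta}(y-x)^{-\beta}$, which is exactly the claim. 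The main obstacle, as indicated, is organizing the two-step conditioning so that the two-point bound absorbs the $\epsilon_x$-factor and a one-point Doob inequality absorbs the $\epsilon_y$-factor, rather than trying to make both martingales large at a single stopping time.
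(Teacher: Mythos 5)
Your first step is sound: on $\{x \in C_t^{\epsilon_x}\}$ the hitting time $T_x^{\epsilon_x}$ is finite and $M_{T_x^{\epsilon_x}}(x) \ge \epsilon_x^{-\beta}$, so Proposition \ref{ExpectationBound} at the single stopping time $\tau = T_x^{\epsilon_x}$ indeed gives $\expect{M_{T_x^{\epsilon_x}}(y)\,\mathbf{1}_{x\in C_t^{\epsilon_x}}} \le c\,\epsilon_x^\beta x^{-\beta}(y-x)^{-\beta}$. But the second step has a genuine gap. The inequality
\begin{align*}
\prob{\left.\sup_{0\le s\le t} M_s(y) \ge \epsilon_y^{-\beta} \,\right|\, \mathcal{F}_{T_x^{\epsilon_x}}} \le \epsilon_y^{\beta}\, M_{T_x^{\epsilon_x}}(y)
\end{align*}
is false in general: the supremum on the left ranges over all of $[0,t]$, whereas the supermartingale maximal inequality started at the conditioning time controls only $\sup_{s \ge T_x^{\epsilon_x}} M_s(y)$. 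On the $\mathcal{F}_{T_x^{\epsilon_x}}$-measurable event that $M_\cdot(y)$ already exceeded $\epsilon_y^{-\beta}$ at some time before $T_x^{\epsilon_x}$ and has since dropped (which can happen, since $M_\cdot(y)$ is a local martingale that can peak and fall back), the left-hand side equals $1$ while $\epsilon_y^\beta M_{T_x^{\epsilon_x}}(y)$ may be arbitrarily small. You correctly diagnosed, a paragraph earlier, that one cannot force $M(x)$ and $M(y)$ to be simultaneously large at a single stopping time; but the proposed Doob-inequality fix re-introduces essentially the same problem on the $y$-side.

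The paper's resolution --- and the point you almost reached in your first line of attack --- is to bound the indicators by the \emph{stopped} processes $M_t^{\epsilon_x}(x)$ and $M_t^{\epsilon_y}(y)$ rather than by values of the unstopped processes at a stopping time. Since $M_t^{\epsilon_x}(x) = M_{t\wedge T_x^{\epsilon_x}}(x)$ is frozen at level $\ge \epsilon_x^{-\beta}$ once $T_x^{\epsilon_x}$ is passed, the pointwise inequality $\epsilon_x^{-\beta}\mathbf{1}\{x\in C_t^{\epsilon_x}\}\le M_t^{\epsilon_x}(x)$ holds for all $t$, and likewise for $y$; this sidesteps the ``simultaneously large'' obstruction entirely, giving
\begin{align*}
\prob{x\in C_t^{\epsilon_x},\, y\in C_t^{\epsilon_y}} \le (\epsilon_x\epsilon_y)^\beta\, \expect{M_t^{\epsilon_x}(x)\, M_t^{\epsilon_y}(y)}.
\end{align*}
The remaining step is to replace the right side by $\expect{M_\tau(x)M_\tau(y)}$ with $\tau = t\wedge T_x^{\epsilon_x}\wedge T_y^{\epsilon_y}$, which follows by conditioning on $\mathcal{F}_\tau$: past $\tau$, one factor is constant and the other is a bounded (hence true) martingale, so the product expectation does not change. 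Then Proposition \ref{ExpectationBound} applies at $\tau$. If you want to repair your own route, you would need to split according to which of $T_x^{\epsilon_x}$, $T_y^{\epsilon_y}$ comes first, condition at the \emph{earlier} one, and apply the maximal inequality only to the process that has not yet hit its threshold; this works but costs a factor of $2$ and is really the same idea as the paper's in disguise.
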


\begin{proof}
First note that $\epsilon^{-\beta} \indicate{x \in C_t^\epsilon}
\leq M_t^{\epsilon}(x)$. Therefore
\begin{align*}
\prob{x \in C_t^{\epsilon_x}, y \in C_t^{\epsilon_y}} \leq \left(
\epsilon_x \epsilon_y \right)^{\beta} \expect{M_t^{\epsilon_x}(x)
M_t^{\epsilon_y}(y)}.
\end{align*}
Proposition \ref{ExpectationBound} comes close to completing the
proof, except that $M_t^{\epsilon_x}(x)$ and $M_t^{\epsilon_y}(y)$
are stopped at different times ($t \wedge T_x^{\epsilon_x}$ for the
former, and $t \wedge T_y^{\epsilon_y}$ for the latter). Let $\tau =
t \wedge T_x^{\epsilon_x} \wedge T_y^{\epsilon_y}$. If $\tau = t$, then
$M_t^{\epsilon_x}(x) M_t^{\epsilon_y}(y) = M_{\tau}(x) M_{\tau}(y)$ and Proposition \ref{ExpectationBound} applies.
If $\tau \neq t$, then Schramm-Zhou write: ``If $\tau =
T_x^{\epsilon_x} < \infty$, then $M_t^{\epsilon_x}(x)$ is constant
in the range $t \in [T_x^{\epsilon_x}, T_y^{\epsilon_y})$, while
$M_t^{\epsilon_y}(y)$ is a martingale. The symmetric statement also
holds when we exchange $x$ and $y$." This implies that
\begin{align*}
\expect{M_t^{\epsilon_x}(x) M_t^{\epsilon_y}(y)} =
\expect{M_{\tau}(x) M_{\tau}(y)}.
\end{align*}
Proposition \ref{ExpectationBound} then finishes the proof.
\end{proof}

\subsection{The Sets $\te{C}$}

Here we describe the relation between the sets $\te{C}$ and $\hitpointst$. In light of \eqref{MtAsCondProb}, which roughly describes $M_t(x)$ as the conditional probability that $x$ is hit by the curve, we should expect that if $M_t(x)$ grows large then the curve must be close to $x$. The next lemma quantifies this intuition. It can also be found in both
\cite{alberts_sheff:dimension} and \cite{schramm_zhou:dimension}, and more detailed proofs can be found in those papers.

\begin{lemma}\label{ctDistance}
If $x \in \te{C}$ then $\operatorname{dist}(x, \gamma[0,t]) \leq 4
\epsilon$. Consequently, $\left | \te{C} \right |$ is an increasing
function of $t$ only at times $t$ for which the curve is within
distance $4 \epsilon$ of the real line.
\end{lemma}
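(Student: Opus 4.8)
The plan is to establish the distance bound $\dist(x, \gamma[0,t]) \le 4\epsilon$ for $x \in \te{C}$ by analyzing what it means for $M_s(x) = (h_s'(x)/h_s(x))^{\beta}$ to reach $\epsilon^{-\beta}$ at some time $s \le t$. Since $\beta > 0$, the condition $M_s(x) \ge \epsilon^{-\beta}$ is equivalent to $h_s'(x)/h_s(x) \ge \epsilon^{-1}$, i.e. $h_s(x) \le \epsilon\, h_s'(x) \le \epsilon$ using the contraction bound $h_s'(x) \le 1$ from Proposition \ref{BesselProp}(h). So at time $s$ the point $x$ has been mapped by $h_s$ to a point of $\Half$ within distance $\epsilon$ of the origin $0 = h_s(\gamma(s))$. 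The heart of the argument is then a distortion/Koebe-type estimate: since $h_s$ is the conformal map from $\Half \setminus K_s$ to $\Half$ normalized hydrodynamically at $\infty$, and $h_s(x)$ is small while $h_s'(x)$ controls the local scale, one shows that the preimage point $x$ must lie within a bounded multiple of $\epsilon$ of the tip $\gamma(s) \in \gamma[0,t]$ in the original coordinate. First I would make this precise: the image of the segment from $x$ to the swallowing boundary under $h_s$ has length comparable to $h_s(x)$, and pulling back through $h_s^{-1}$ (whose derivative is $1/h_s'(x) \ge 1$ near the relevant region, but controlled by Koebe on the line segment from $h_s(x)$ down to $0$) gives a bound of the form $|x - \gamma(s)| \le 4\epsilon$ after tracking the constants through the Koebe $1/4$-theorem and its derivative form.

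More concretely, the cleanest route is to work on the real line directly. On the event $s < T_x$, the point $h_s(x) > 0$ lies on $\partial \Half$ to the right of $K_s$, and the real interval $[0, h_s(x)]$ pulls back under $h_s^{-1}$ to a real interval $[\ell_s, x]$ where $\ell_s$ is the right endpoint of $K_s \cap \R$ (the largest real point already swallowed). The tip $\gamma(s)$ satisfies $\ell_s \le \gamma(s)$ whenever $\gamma(s) \in \R$, and more importantly the whole hull $K_s \cap \R$, which contains points of $\gamma[0,s]$, lies to the left of $x$ at real-line distance at most the pullback length of $[0, h_s(x)]$. The length of that pullback is $\int_0^{h_s(x)} (h_s^{-1})'(w)\, dw$, and the standard boundary distortion estimate for conformal maps of $\Half$ (or equivalently the Koebe estimate applied in the disk picture) bounds $(h_s^{-1})'(w)$ on $[0, h_s(x)]$ by a constant multiple of $(h_s^{-1})'(h_s(x)) = 1/h_s'(x)$, yielding pullback length $\le 4 h_s(x)/h_s'(x) \le 4\epsilon$. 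Hence the real point $\ell_s \in \gamma[0,s] \cap \R$ (or at worst a swallowed point arbitrarily close to a hit point) is within $4\epsilon$ of $x$, and since $\gamma[0,s] \subset \gamma[0,t]$ this gives $\dist(x,\gamma[0,t]) \le 4\epsilon$. I expect the main obstacle to be pinning down the constant $4$ rigorously: one must invoke the precise form of the Koebe distortion theorem (the bound $|f'(z)| \le |f'(0)|(1+|z|)/(1-|z|)^3$ type estimate, or its half-plane analogue) and check that integrating it over the relevant segment and accounting for the factor coming from $h_s'(x) \le 1$ genuinely produces $4\epsilon$ and not some other constant — this is exactly the kind of bookkeeping the paper defers to \cite{alberts_sheff:dimension} and \cite{schramm_zhou:dimension}.

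For the second sentence of the lemma, the point is almost immediate once the first is in hand. The quantity $|\te{C}|$ is nondecreasing in $t$ because $\te{C}$ is an increasing family of sets (each $\sup_{0\le s\le t} M_s(x)$ is nondecreasing in $t$). Suppose $|\te{C}|$ strictly increases across some time $t$, say $|C^{\epsilon}_{t+\delta}| > |\te{C}|$ for every $\delta > 0$; then there exist points $x$ entering $\te{C}$ at times arbitrarily close to $t$, and by the first part each such $x$ satisfies $\dist(x, \gamma[0,s]) \le 4\epsilon$ for the time $s \le t+\delta$ at which it entered, with $s \to t$. Since $x$ ranges over an interval pulled back near the tip, the tip $\gamma(s)$ itself is forced within $4\epsilon$ of $\R$ (indeed the entering points are in $\R$ and within $4\epsilon$ of $\gamma(s)$), so by continuity of the curve $\gamma(t)$ is within distance $4\epsilon$ of $\R$. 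Contrapositively, at any time $t$ for which $\dist(\gamma(t), \R) > 4\epsilon$, no new points can enter $\te{C}$, so $|\te{C}|$ is locally constant there; this is precisely the assertion. I would phrase this last step carefully using right-continuity of $t \mapsto |\te{C}|$ and the fact that $\gamma$ is continuous, so that "$|\te{C}|$ increases only at times when the curve is within $4\epsilon$ of $\R$" holds in the sense that the set of increase times is contained in the closed set $\{t : \dist(\gamma(t),\R) \le 4\epsilon\}$.
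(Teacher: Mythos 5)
There is a genuine gap in your argument for the main distance estimate, and it is worth pinpointing because it shows up precisely in the step you flagged as "bookkeeping."

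Your second paragraph tries to bound $x - \ell_s$ by writing it as $\int_{0}^{h_s(x)} (h_s^{-1})'(w)\,dw$ and then claiming the integrand is bounded by a constant multiple of $(h_s^{-1})'(h_s(x)) = 1/h_s'(x)$ on the whole interval. This pointwise bound is false. The map $h_s$ has a square-root singularity at the right endpoint $\ell_s$ of $K_s \cap \R$ (equivalently, at the right endpoint $b_s$ of the image slit), so $(h_s^{-1})'(w) \to \infty$ as $w \downarrow b_s$. A concrete check: for the deterministic slit flow $h_s(x) = \sqrt{x^2 + 2s}$ (so $\ell_s = 0$, $b_s = \sqrt{2s}$), one has $(h_s^{-1})'(w) = w/\sqrt{w^2 - 2s}$, which is unbounded near $w = b_s$, while $4/h_s'(x)$ stays bounded. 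No version of the Koebe distortion theorem gives a \emph{uniform} derivative bound as you approach the boundary of the domain; the upper distortion bound $|f'(z)| \le |f'(0)|(1+|z|)/(1-|z|)^3$ blows up as $|z| \to 1$, and that blow-up is exactly what you are seeing. Separately, integrating from $0$ rather than from $b_s$ is problematic when $b_s > 0$: on $(0,b_s)$ the boundary extension of $h_s^{-1}$ parametrizes the outer boundary of the hull in $\H$, not a real segment, so $\int_0^{h_s(x)}(h_s^{-1})'\,dw$ is not $x - \ell_s$.

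The paper avoids all of this by applying the Koebe $1/4$ theorem to the \emph{forward} map $H_s$ (the Schwarz reflection of $h_s$ across $\R$), once, at the single point $x$ — no integration of derivatives. Since $H_s$ maps $\C \setminus \tilde K_s$ conformally onto $\C \setminus \tilde I_s$ with $\tilde I_s$ a compact real slit containing $0 = h_s(\gamma(s))$, the $1/4$-theorem gives $\dist\bigl(h_s(x), \tilde I_s\bigr) \ge \tfrac14 h_s'(x)\,\dist(x, \tilde K_s)$, and the trivial bound $\dist\bigl(h_s(x), \tilde I_s\bigr) \le h_s(x)$ (because $0 \in \tilde I_s$) yields $\dist(x, K_s) \le 4\,h_s(x)/h_s'(x)$ directly. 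This also sidesteps a second soft spot in your write-up: you aim to bound $x - \ell_s$, but the natural quantity the conformal estimate controls is $\dist(x, K_s)$, which can be strictly smaller (the nearest hull point to $x$ need not be on $\R$). One then observes $\dist(x,K_s) = \dist(x,\gamma[0,s])$, since the nearest boundary point of $K_s$ to an unswallowed real $x$ lies on the outer boundary of the hull, which is contained in $\gamma[0,s]$. Note also that your first paragraph speaks of bounding $|x - \gamma(s)|$ (distance to the \emph{tip}); the tip can be far away, and the statement only asserts a bound on $\dist(x,\gamma[0,t])$. The rest — replacing $t$ by the first time $s \le t$ at which $M_s(x)$ reaches $\epsilon^{-\beta}$, and the monotonicity-based reading of the "consequently" clause — is in order.
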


\begin{proof}
Apply the Koebe 1/4 Theorem to the map $h_t$ extended across the
real axis by Schwarz reflection. It follows immediately that
\begin{align*}
\dist \left( x, \gamma[0,t] \right) \leq 4\frac{h_t(x)}{h_t'(x)},
\end{align*}
and then, since $\beta > 0$,
\begin{align*}
M_t(x) \leq \left( \frac{4}{\dist \left(x, \gamma[0,t] \right) }
\right)^{\beta}.
\end{align*}
The rest follows from the definition of $\te{C}$.
\end{proof}

\begin{remark}
Lemma \ref{ctDistance} shows that $C_t \subset \hitpointst$. It would be extremely helpful if the converse to the lemma were true, i.e. if $\dist(x, \gamma[0,t]) \leq K \epsilon$ for some constant $K$, then $x \in \te{C}$. If this were true then we would have $C_t = \hitpointst$. Schramm and Zhou are able to give a partial converse when the curve approaches the real line ``without making fjords''. In this paper we will not use their converse, but we note their important result that $\haussdim C_t = \haussdim \hitpointst$ for all $t > 0$ (they only prove the case $t = \infty$, but scaling properties easily extend the result to all $t > 0$). Hence, at least as measured by Hausdorff dimension, $C_t$ is not much smaller than $\hitpointst$, and in particular is non-empty for all $t > 0$.
\end{remark}

\begin{remark}
The sets $\te{C}$ are meant to act as a ``thickening'' of $\hitpointst$ by intervals whose length is of order $\epsilon$, but as the last remark shows, $\te{C}$ may miss still miss some points of $\hitpointst$. They also have the opposite problem: they may include too much. Consider the case where the SLE curve comes close to an interval without ever touching the real line nearby. The lemma says that the points of the interval will likely (but not necessarily) belong to $\te{C}$, even though they may be far from $\hitpointst$. For SLE curves this is not much of a problem, since transience of the curve means that it cannot come arbitrarily close to the interval without hitting it, and therefore when $\epsilon$ becomes sufficiently small the points in the interval will cease to belong to $\te{C}$. Note, however, that the $\epsilon$ at which these ``extra points'' vanish is a function of the curve in question, and therefore random.
\end{remark}

\section{The Doob-Meyer Decomposition \label{DMSection} }

\subsection{The Doob-Meyer Decomposition for Supermartingales}

In this section we briefly review some basic facts about the Doob-Meyer decomposition. We will only state the definitions and theorems that we will use, and refer the reader to \cite{karatzas_shreve} or \cite{revuz_yor} for very well written introductions to the Doob-Meyer theory. For an extremely detailed and rich treatment, we recommend \cite{dellacherie_meyer}. The notation we use will most closely resemble Section 1.4 of \cite{karatzas_shreve} and is self-contained to this section; specifically it does not refer to notation of previous or future sections unless explicitly stated so.

Throughout this section we will assume that $Z_t$ is a supermartingale with respect to a filtration $\Ft$, defined on some interval of time $[0, \zeta]$, for a stopping time $\zeta$ called the \textit{lifetime} of $Z$. As before, we assume that $\Ft$ satisfies the \textit{usual conditions}.

\begin{definition}
The supermartingale $Z$ is said to be of \textit{class $\mathcal{D}$} if the family
\begin{align*}
\{ Z_{\tau} : \tau \leq \zeta \textrm{ is an almost surely finite stopping time} \}
\end{align*}
is uniformly integrable.
\end{definition}

\begin{definition}
$Z$ is said to be \textit{regular} if for every $l > 0$, and every non-decreasing sequence of stopping times $\tau_n$ with $\prob{\tau_n \leq l} = 1$ and $\tau := \lim_{n \to \infty} \tau_n$, one has
\begin{align*}
\lim_{n \to \infty} \expect{Z_{\tau_n}} = \expect{Z_{\tau}}.
\end{align*}
\end{definition}

\begin{definition}
The \textit{predictable $\sigma$-field} is the coarsest $\sigma$-field on $\Omega \times \Rplus$ for which all continuous, $\Ft$-adapted processes are measurable. A process $A_t$ is predictable if the map $(\omega, t) \to A_t(w)$ from $\Omega \times \Rplus$ into $(\R_+, \borelplus)$ is measurable with respect to the predictable $\sigma$-field.
\end{definition}

\begin{theorem}[Doob-Meyer Decomposition]\label{DMTheorem}
Let $Z$ be a supermartingale of \textit{class $\mathcal{D}$} defined on $[0, \zeta]$. Then there exists a predictable, non-decreasing process $A$ that is right-continuous with left limits, such that $A_0 = 0$, $A_{\zeta}$ is integrable, and
\begin{align*}
Z_t = \condexpect{A_{\zeta} - A_t}{\F_t} + \condexpect{Z_{\zeta}}{\F_t}.
\end{align*}
If one defines $M_t := \condexpect{A_{\zeta} + Z_{\zeta}}{\F_t}$, then $M$ is a martingale and the above representation can be written as
\begin{align*}
Z_t = M_t - A_t.
\end{align*}
This decomposition is unique up to indistinguishability, i.e. if $M'$ and $A'$ are a martingale and a predictable, non-decreasing process (resp.) satisfying the above properties, then
\begin{align*}
\prob{M_t = M_t', \,\, A_t = A_t' \,\, \forall \, t \geq 0} = 1.
\end{align*}
Lastly, if $Z$ is regular then $A$ is continuous. If $Z$ is continuous, then $M$ and $A$ are both continuous.
\end{theorem}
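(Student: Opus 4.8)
The statement is classical, and the plan is to reproduce Meyer's discrete-approximation proof, with the uniform-integrability step organised in the streamlined form due to Rao. \emph{Reduction to a potential:} replacing $Z_t$ by $\hat Z_t := Z_t - \condexpect{Z_\zeta}{\F_t}$ produces a nonnegative supermartingale on $[0,\zeta]$ with $\hat Z_\zeta = 0$, still of class $\mathcal{D}$ (the subtracted piece is a uniformly integrable martingale since $Z_\zeta \in L^1$ by the class $\mathcal{D}$ hypothesis); it suffices to produce a predictable, non-decreasing, cadlag $A$ with $A_0 = 0$ and $\hat Z_t = \condexpect{A_\zeta - A_t}{\F_t}$, since then $M_t := \condexpect{A_\zeta + Z_\zeta}{\F_t}$ is a martingale and $A_t := M_t - Z_t$. \emph{Discrete Doob decompositions:} for each $n$ take the $n$-th dyadic grid $0 = t_0 < t_1 < \cdots$ of $[0,\infty)$, refined to contain $\zeta$ (extending the time axis in the usual way if $\zeta$ is not a.s.\ finite), and set $A^n_{t_{k+1}} - A^n_{t_k} := \condexpect{\hat Z_{t_k} - \hat Z_{t_{k+1}}}{\F_{t_k}} \ge 0$, extended to be right-continuous and constant between grid points; each $A^n$ is predictable and increasing, and telescoping gives $\expect{A^n_\zeta} = \expect{\hat Z_0}$, a finite constant independent of $n$.

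\emph{Uniform integrability of $\{A^n_\zeta\}$ --- the main obstacle.} This is the crux, and the one place where class $\mathcal{D}$ enters essentially. For $\lambda > 0$ put $T^n_\lambda := \min\{t_k : A^n_{t_{k+1}} > \lambda\}$, with $T^n_\lambda := \zeta$ if there is no such $t_k$; since $\{A^n_{t_{k+1}} > \lambda\} \in \F_{t_k}$ by predictability, $T^n_\lambda$ is a stopping time, and because $A^n$ is increasing and null at $0$ one has $\{A^n_\zeta > \lambda\} = \{T^n_\lambda < \zeta\}$, so $\prob{T^n_\lambda < \zeta} \le \expect{A^n_\zeta}/\lambda \to 0$ uniformly in $n$ as $\lambda \to \infty$. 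Writing $A^n_{t_k} = M^n_{t_k} - \hat Z_{t_k}$ with $M^n_{t_k} := \condexpect{A^n_\zeta}{\F_{t_k}}$ (the discrete martingale part), one gets $A^n_\zeta - A^n_{T^n_\lambda} = (M^n_\zeta - M^n_{T^n_\lambda}) + \hat Z_{T^n_\lambda}$; optional sampling annihilates the martingale term on $\{T^n_\lambda < \zeta\} \in \F_{T^n_\lambda}$, and since $A^n_{T^n_\lambda} \le \lambda$ there, this yields $\tfrac12 \expect{A^n_\zeta ; A^n_\zeta > 2\lambda} \le \expect{\hat Z_{T^n_\lambda} ; T^n_\lambda < \zeta}$. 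The family $\{\hat Z_{T^n_\lambda}\}_{n,\lambda}$ is uniformly integrable by class $\mathcal{D}$, so integrating it over the sets $\{T^n_\lambda < \zeta\}$ of uniformly vanishing probability drives the right-hand side to $0$ as $\lambda \to \infty$, proving $\{A^n_\zeta\}$ uniformly integrable. The endpoint conventions and numerical constants need care here.

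\emph{Passing to the limit.} By Dunford--Pettis there is a subsequence along which $A^{n_k}_\zeta$ converges weakly in $L^1$ to some $\xi$; set $M_t := \condexpect{\xi + Z_\zeta}{\F_t}$, $A_t := M_t - Z_t$, so $A_0 = 0$ and $A_\zeta = \xi$. To see that $A$ is non-decreasing and predictable, I would pass to the limit in $\expect{\int_0^\zeta H_s \, dA^{n_k}_s}$ for bounded $H \ge 0$: taking $H$ bounded and continuous (hence predictable) identifies the limit as a ``natural'' increasing process in Meyer's sense, which on a filtration satisfying the usual conditions coincides with being predictable, and nonnegativity of the increments passes through (Mazur's lemma, replacing the weak convergence by strong convergence of convex combinations of the $A^{n_k}$, gives a cleaner route to the same end). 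This proves existence and the stated representation. \emph{Uniqueness:} if $Z = M - A = M' - A'$ are two decompositions of the asserted type, then $N := M - M' = A - A'$ is a predictable martingale, null at $0$, with paths of integrable total variation; its jumps occur at predictable times $\tau$, at which $\Delta N_\tau$ is $\F_{\tau-}$-measurable, so $\Delta N_\tau = \condexpect{\Delta N_\tau}{\F_{\tau-}} = 0$; hence $N$ is continuous, and a continuous finite-variation local martingale null at $0$ vanishes, so $M = M'$ and $A = A'$ up to indistinguishability.

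\emph{The two refinements.} If $Z$ is regular, then, $A$ being predictable and increasing, its jumps occur only at predictable times; for such a $\tau$ (localised to be bounded, as in the definition of regular), announced by $\tau_n \uparrow \tau$, one has $\expect{A_\tau - A_{\tau_n}} = \expect{Z_{\tau_n} - Z_\tau} \to 0$ by regularity (the uniformly integrable martingale $M$ contributes $\expect{M_{\tau_n} - M_\tau} = 0$), whence $\expect{\Delta A_\tau} = 0$ and, $\Delta A_\tau$ being nonnegative, $\Delta A \equiv 0$: $A$ is continuous. If moreover $Z$ is continuous, then $Z$ is regular --- along $\tau_n \uparrow \tau$, a.s.\ convergence together with uniform integrability (class $\mathcal{D}$) gives $L^1$ convergence and hence convergence of expectations --- so $A$ is continuous by the previous sentence, and $M = Z + A$ is continuous as well. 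The main obstacle throughout is the uniform-integrability estimate of the second paragraph, where the class $\mathcal{D}$ hypothesis is indispensable, with the predictability of the weak limit a close second; the remaining steps are structural bookkeeping or routine appeals to standard facts about predictable finite-variation processes.
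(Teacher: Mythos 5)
The paper itself does \emph{not} prove Theorem~\ref{DMTheorem}; it states it as classical and refers the reader to Karatzas--Shreve, Revuz--Yor, and Dellacherie--Meyer. So there is no ``paper proof'' to match against, and your proposal should be judged on its own.

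As a self-contained outline your proof is correct and is, in fact, the standard Meyer proof of the Doob--Meyer theorem with Rao's uniform-integrability argument substituted for Meyer's original: reduction to the potential $\hat Z$, discrete Doob decompositions on refining grids, the stopping-time trick showing $\{A^n_\zeta\}$ is uniformly integrable via the class-$\mathcal{D}$ hypothesis, Dunford--Pettis plus Mazur to pass to the limit and preserve monotonicity, identification of naturality with predictability (the Dol\'eans theorem, valid under the usual conditions), uniqueness via the fact that a predictable finite-variation local martingale null at $0$ vanishes, and finally the regularity $\Rightarrow$ continuity step at the predictable times that exhaust the jumps of $A$. The one point you gloss over --- predictability of the weak limit --- genuinely requires the Dol\'eans ``natural $=$ predictable'' theorem (or a direct argument that convex-combination limits of predictable increasing processes are predictable), but you flag this and invoke the right device, so it is a deliberate appeal to a cited fact rather than a gap. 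The constants and endpoint bookkeeping in the UI estimate are as you say a matter of care, and the computation you sketch does close up.
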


Our use of the decomposition will be to show that a specific supermartingale $X$ (to be defined in the next section) is regular and of \textit{class $\mathcal{D}$}. This $X$ will satisfy $X_{\zeta} = 0$ almost surely, so that in particular $M_t = \condexpect{A_{\zeta}}{\F_t}$.

\begin{remark}
The notion of predictability is somewhat technical, but is best understood in the setting of discrete parameter processes where the analogous condition is that ``$A_n$ is $\F_{n-1}$-measurable''. We want to emphasize that predictability is important for the uniqueness part of the Doob-Meyer decomposition, but it is not a concept that we will explicitly use in this paper. The supermartingale $X_t(I)$ that we will decompose will turn out to be continuous, and therefore its non-decreasing part will be as well, and it suffices to know that continuous processes are always predictable. Moreover, predictable processes are always adapted to $\Ft$ (by definition), and so condition three of Theorem \ref{mainTheorem} implies that $\mum{\cdot \cap K_t}$ is $\Ft$-measurable. This further implies that the measure of an interval is completely determined at its swallowing time, as was previously mentioned.
\end{remark}

We will also consider a certain $\epsilon$-approximation $X^{\epsilon}$ to our supermartingale $X$, and the corresponding Doob-Meyer decomposition $X^{\epsilon} = M^{\epsilon} - A^{\epsilon}$. It will turn out that $X^{\epsilon}$ increases to $X$ as $\epsilon \downarrow 0$, so it is natural to ask if the corresponding parts of the Doob-Meyer decomposition might also converge. The next theorem gives an affirmative answer.

\begin{theorem}{\cite[Chapter VII, Section 20]{dellacherie_meyer}}\label{DellMTheorem}
Let $Z^n$ be an increasing sequence of positive supermartingales, where the limit $Z$ belongs to \textit{class $\mathcal{D}$} and is regular. Let $A^n$ and $A$ denote the non-decreasing processes associated to $Z^n$ and $Z$, respectively. Then for all stopping times $T$,
\begin{align*}
\lim_{n \to \infty} \expect{\left| A_T^{n} - A_T \right|} = 0.
\end{align*}
\end{theorem}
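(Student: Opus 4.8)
The plan is to reduce the claim to the convergence of the expectations $\expect{A^n_T}$ at every stopping time $T$ — which follows softly from optional sampling and monotone convergence — and then to upgrade this convergence of means to the stated $L^1$ convergence, for which the regularity hypothesis on $Z$ is essential.

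First I would observe that all the relevant supermartingales are of class $\mathcal{D}$: since $0\le Z^n\le Z$ and $0\le Z-Z^n\le Z$ with $Z$ of class $\mathcal{D}$, the families $\{Z^n_\tau\}$ and $\{(Z-Z^n)_\tau\}$ (over a.s.\ finite stopping times $\tau\le\zeta$) are dominated by the uniformly integrable family $\{Z_\tau\}$, hence are themselves uniformly integrable. So Theorem \ref{DMTheorem} applies to each $Z^n$, yielding $Z^n=M^n-A^n$ with $M^n$ a uniformly integrable martingale (closed by the integrable variable $Z^n_\zeta+A^n_\zeta$) and $A^n$ predictable non-decreasing with $A^n_0=0$. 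Optional sampling for $M^n=Z^n+A^n$ then gives $\expect{A^n_T}=\expect{M^n_0}-\expect{Z^n_T}=\expect{Z^n_0}-\expect{Z^n_T}$ for every stopping time $T\le\zeta$, and likewise $\expect{A_T}=\expect{Z_0}-\expect{Z_T}$. Since $Z^n\uparrow Z$ pointwise and $Z_0,Z_T$ are integrable (class $\mathcal{D}$), monotone convergence gives $\expect{Z^n_0}\to\expect{Z_0}$ and $\expect{Z^n_T}\to\expect{Z_T}$, hence $\expect{A^n_T}\to\expect{A_T}$. Running the same computation with $M^n$ sampled at two stopping times $\sigma\le\tau$ and integrated against $\indicate{F}$ for $F\in\F_\sigma$, and then invoking a monotone-class argument on the predictable $\sigma$-field, one can further upgrade this to $\expect{\int_0^\zeta H\,dA^n}\to\expect{\int_0^\zeta H\,dA}$ for every bounded predictable $H\ge0$; that is, the compensator measures $dA^n\,dP$ converge to $dA\,dP$ setwise on the predictable $\sigma$-field, with convergence of total mass.

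The hard part is to pass from this ``weak'' convergence to $\expect{|A^n_T-A_T|}\to0$. Convergence of the expectations alone only gives $\expect{A^n_T-A_T}\to0$, and $A^n-A$ — being the difference of the predictable increasing parts of $Z$ and $Z^n$ — is in general neither non-negative nor non-positive, so the absolute value cannot be dropped for free. This is exactly where regularity of $Z$ is used: by the last assertion of Theorem \ref{DMTheorem} it is equivalent to continuity of $A$, and with $A$ continuous one can use $A$ itself as a clock. I would fix $\delta>0$, set $\sigma_0:=0$ and $\sigma_{k+1}:=\inf\{t>\sigma_k:A_t\ge A_{\sigma_k}+\delta\}\wedge\zeta$, so that continuity of $A$ forces each block increment $A_{\sigma_{k+1}\wedge T}-A_{\sigma_k\wedge T}$ to be at most $\delta$ identically; then decompose $A^n_T$ and $A_T$ into the corresponding blocks, estimate $\expect{|A^n_T-A_T|}$ block by block by comparing the non-negative increments of $A^n$ against the $\delta$-controlled increments of $A$ while using the convergence of means from the previous step to match the block expectations, sum up, and finally let $\delta\downarrow0$. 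This is the scheme carried out in \cite[Chapter VII]{dellacherie_meyer}, to which the details are deferred; throughout, the class $\mathcal{D}$ hypothesis is precisely what makes the terminal variables $A^n_\zeta,Z_T,\dots$ integrable and the $M^n$ uniformly integrable, so that the optional-sampling manipulations above are legitimate.

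I expect essentially all of the difficulty to sit in this last step: the reduction to convergence of means is routine once the class $\mathcal{D}$ domination is noticed, whereas converting setwise convergence of the compensators into $L^1$ convergence at each stopping time genuinely requires the continuity of the limiting increasing process.
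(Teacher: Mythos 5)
The paper does not prove Theorem \ref{DellMTheorem}; it invokes it directly from Dellacherie--Meyer, so there is no in-paper argument to compare your proposal against. That said, your preliminary reductions are correct and do capture the real structure of the problem: the observation that $0\le Z^n\le Z$ forces each $Z^n$ (and each $Z-Z^n$) to inherit class $\mathcal{D}$ from $Z$ by domination, the optional-sampling identity $\expect{A^n_T}=\expect{Z^n_0}-\expect{Z^n_T}$ giving $\expect{A^n_T}\to\expect{A_T}$ by monotone convergence, and the recognition that the role of regularity is precisely to make $A$ continuous (via the last clause of Theorem \ref{DMTheorem}). You are also right that the genuinely hard step is upgrading convergence of means to $L^1$ convergence.

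However, the block-decomposition sketch you give for that last step does not close as stated. With $\sigma_k$ the successive level-$\delta$ crossing times of $A$, each block increment $a_k=A_{\sigma_{k+1}\wedge T}-A_{\sigma_k\wedge T}$ satisfies $a_k\le\delta$, and per-block mean convergence together with $a_k\le\delta$ gives $\expect{|a^n_k-a_k|}\le 2\delta+o(1)$. But the number of active blocks is of order $A_\infty/\delta$, so summing over $k$ produces a bound of order $\expect{A_\infty}+o(1)$ rather than $o(1)$; sending $\delta\downarrow 0$ then gains nothing, because the $O(\delta)$ per-block error is exactly matched by the $O(1/\delta)$ block count. More precisely, using $|a^n_k-a_k|=(a^n_k-a_k)+2(a_k-a^n_k)^+$ one finds $\expect{|A^n_T-A_T|}\le\sum_k\expect{|a^n_k-a_k|}=\expect{A^n_T-A_T}+2\sum_k\expect{(a_k-a^n_k)^+}$, and the only control you have on the last sum from $a_k\le\delta$ and mean convergence alone is that it is bounded by $\expect{A_T}$ — not that it tends to zero. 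So the sketch identifies the right tool (continuity of $A$, i.e.\ the $\delta$-clock) but not the mechanism by which Dellacherie--Meyer actually extract $L^1$ convergence from it; that argument requires a genuinely additional idea beyond summing per-block estimates. Since you explicitly defer to the reference for these details this is a partial gap rather than an incorrect claim, but as written the proposal does not constitute a proof.
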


\subsection{The Doob-Meyer Decomposition Without the \textit{Class $\mathcal{D}$} Property}

One might naturally wonder if the \textit{class $\mathcal{D}$} property is actually important in the above decomposition. As we mentioned in the introduction, even without the \textit{class $\mathcal{D}$} property any supermartingale can always be uniquely decomposed as a local martingale minus a predictable, non-decreasing process. For a precise statement of this result we refer the reader to \cite[Chapter VII, Section 12]{dellacherie_meyer}. Any positive local martingale is automatically a supermartingale and therefore admits this decomposition, but by uniqueness the non-decreasing part must be zero. In particular, $M_t(x)$ has only this trivial decomposition. This tells us how \textit{not} to build our measure: the Doob-Meyer decomposition of the integral
\begin{align*}
\int_I M_t(x) \, dx
\end{align*}
is \textit{not} the integral of the Doob-Meyer decomposition for $M_t(x)$. In other words, the Doob-Meyer decomposition does not necessarily commute with integration.

\section{Construction of the Measure \label{ConstructSection} }

In this section we use the Doob-Meyer decomposition to construct the measure on $\hitpoints_+$ described in Theorem \ref{mainTheorem}. Most of the section is devoted to analyzing the process $X_t(I)$.

\subsection{The Process $X_t(I)$ \label{XtMeasure}}

In this section we study the stochastic process
\begin{align*}
X_{t}(I) := \int_I M_t(x) \, dx
\end{align*}
for intervals $I \subset \Rplus$. With this definition $X_t$ is the random measure on $\Rplus$ whose Radon-Nikodym derivative with respect to Lebesgue measure is $M_t(x)$. However, in this section we will mostly be concerned with the process $X_t(I)$ for a fixed interval $I$. Most of the results can be generalized from intervals to arbitrary $A \in \borelplus$.

First note $X_t(I)$ is finite almost surely at a fixed time $t \geq 0$ since, by Fubini's theorem,
\begin{align*}
\expect{X_t(I)} = \int_I \expect{M_t(x)} \, dx < \infty.
\end{align*}
The last inequality is an easy consequence of the explicit form of $\expect{M_t(x)}$ in \eqref{martExpect}. Fubini's theorem also shows that $X_t(I)$ is a supermartingale, since
\begin{align*}
\condexpect{X_t(A)}{\F_s} = \int_A \condexpect{M_t(x)}{\F_s} \, dx \leq \int_A M_s(x) \, dx = X_s(A).
\end{align*}
for $0 \leq s < t$. The inequality follows from $M_t(x)$ being a non-negative supermartingale. In fact $M_t(x)$ is a local martingale, which might lead one to speculate that $X_t(I)$ is also a local martingale. In the next section we will see that this is not the case. We will see that $X_t(I)$ admits a decomposition as a martingale minus a non-decreasing part that is not identically zero, and therefore it cannot be a local martingale.

We will also use the notation
\begin{align}
T_I := \inf \{ t \geq 0 : I \textrm{ is entirely swallowed by } \gamma \} = \sup_{x \in I} T_I,
\end{align}
which is again a stopping time. For $t \geq T_I$, Corollary \ref{zeroLebesgue} shows $M_t(x)$ as a function of $x$ is identically zero except for a possible set of Lebesgue measure zero. Hence $X_t(I) = 0$ for $t \geq T_I$.

Using Proposition \ref{ExpectationBound}, we are also able to give an upper bound on the expected squared mass of an interval $I$ under $X_t$.

\begin{proposition}\label{IntegralBound}
Let $I = (x_1, x_2] \subset \Rplus$ be an interval. Then there exists a constant $c_*$, depending only on $\kappa$, such that
\begin{align*}
\expect{X_{\tau}(I)^2} = \expect{\left( \int_I M_{\tau}(x) \, dx \right)^2} \leq c_* x_1^{-\beta} \left| I
\right|^{1+d}
\end{align*}
for every stopping time $\tau$ with $\prob{\tau < \infty} = 1$.
\end{proposition}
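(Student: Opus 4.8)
The plan is to write the square of the integral as a double integral and use Fubini together with the two-point bound of Proposition~\ref{ExpectationBound}. Concretely, for a fixed almost surely finite stopping time $\tau$, we have
\begin{align*}
\expect{X_{\tau}(I)^2} = \expect{\int_I \int_I M_{\tau}(x) M_{\tau}(y) \, dx \, dy} = \int_I \int_I \expect{M_{\tau}(x) M_{\tau}(y)} \, dx \, dy,
\end{align*}
where the interchange of expectation and integration is justified by Tonelli's theorem since the integrand is non-negative. By symmetry in $x$ and $y$ this equals $2 \int_I \int_{\{x < y\}} \expect{M_{\tau}(x) M_{\tau}(y)} \, dx \, dy$ (the diagonal has measure zero), and now Proposition~\ref{ExpectationBound} gives $\expect{M_{\tau}(x) M_{\tau}(y)} \leq c \, x^{-\beta} (y-x)^{-\beta}$ for $0 < x < y$.

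The remaining work is a deterministic estimate of the double integral
\begin{align*}
2c \int_{x_1}^{x_2} x^{-\beta} \left( \int_x^{x_2} (y-x)^{-\beta} \, dy \right) dx.
\end{align*}
The inner integral is $\int_0^{x_2 - x} v^{-\beta} \, dv = \frac{(x_2-x)^{1-\beta}}{1-\beta} = \frac{(x_2-x)^{d}}{d}$, using $1 - \beta = d$ and $0 < \beta < 1$ so the integral converges. Then $\int_{x_1}^{x_2} x^{-\beta} (x_2 - x)^{d} \, dx \leq x_1^{-\beta} \int_{x_1}^{x_2} (x_2-x)^{d} \, dx = x_1^{-\beta} \frac{(x_2-x_1)^{1+d}}{1+d} = x_1^{-\beta} \frac{|I|^{1+d}}{1+d}$, where we bounded $x^{-\beta} \leq x_1^{-\beta}$ on $I$ since $x^{-\beta}$ is decreasing and $x \geq x_1$. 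Collecting constants, the claim holds with $c_* = \frac{2c}{d(1+d)}$ (depending only on $\kappa$, as $c$ does).

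I do not expect any serious obstacle here: the only points requiring a word of care are the application of Tonelli to pass the expectation inside (legitimate because $M_{\tau}(x) M_{\tau}(y) \geq 0$), the convergence of the $v^{-\beta}$ integral (which needs $\beta < 1$, guaranteed by $4 < \kappa < 8$), and the monotonicity bound $x^{-\beta} \leq x_1^{-\beta}$. One subtlety worth noting: Proposition~\ref{ExpectationBound} is stated for $0 < x < y$ with a stopping time $\tau$ finite almost surely, which is exactly the hypothesis we have, so it applies directly to $M_\tau(x)M_\tau(y)$ with both factors stopped at the \emph{same} time $\tau$; no version of the splitting argument from Corollary~\ref{IndicatorBound} is needed since here both points are stopped at the common time $\tau$.
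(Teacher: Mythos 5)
Your proof is correct and essentially identical to the paper's: both write $X_\tau(I)^2$ as a double integral, interchange expectation and integration by Tonelli, symmetrize to restrict to $\{x<y\}$, invoke Proposition~\ref{ExpectationBound}, bound $x^{-\beta}\le x_1^{-\beta}$, and evaluate the remaining elementary integral. The only cosmetic difference is the order in which the two one-dimensional integrals are computed, which of course yields the same constant $c_*=2c/(d(1+d))$.
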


\begin{proof}
By Proposition \ref{ExpectationBound},
\begin{align*}
\expect{\left( \int_I M_{\tau}(x) \, dx \right)^2} &= \int_I \int_I
\expect{M_{\tau}(x) M_{\tau}(y)} \, dx \, dy \\
&\leq 2c \int_{x_1}^{x_2} \int_{x_1}^y x^{-\beta} (y-x)^{-\beta} \,
dx \, dy \\
& \leq 2c x_1^{-\beta} \int_{x_1}^{x_2} \int_{x_1}^y
\left( y-x \right)^{-\beta} \, dx \, dy \\
& = 2c \frac{x_1^{-\beta}}{1-\beta} \int_{x_1}^{x_2} (y-x_1)^{1-\beta} \, dy\\
& = 2c \frac{x_1^{-\beta}}{(1-\beta)(2-\beta)} (x_2 -
x_1)^{2-\beta} .
\end{align*}
Recalling that $d = 1 - \beta$ finishes the proof.
\end{proof}

\begin{corollary}\label{XtHolder}
Let $I = (x_1, x_2]$ with $0 < x_1 < x_2 < \infty$. Then as a function of $x_2$, $X_t(I)$ has a version that is H\"{o}lder-$\gamma$ continuous, for any $\gamma < d/2$.
\end{corollary}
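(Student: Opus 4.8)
The plan is to apply the Kolmogorov--Chentsov continuity criterion to the map $x_2 \mapsto X_t((x_1, x_2])$ on the compact interval $[x_1, x_2]$. Write $F(y) := X_t((x_1, y]) = \int_{x_1}^y M_t(x)\,dx$. For $x_1 < y < y' \le x_2$ we have the increment $F(y') - F(y) = X_t((y, y'])$, which is again an integral of the two-point object against Lebesgue measure on the sub-interval $(y, y']$. So the natural estimate to invoke is Proposition \ref{IntegralBound}, but with $\tau$ replaced by the deterministic time $t$ (a deterministic time is of course an almost surely finite stopping time, so the proposition applies). This gives
\begin{align*}
\expect{\left( F(y') - F(y) \right)^2} = \expect{X_t((y,y'])^2} \le c_* \, y^{-\beta} (y' - y)^{1+d} \le c_* \, x_1^{-\beta} (y' - y)^{1+d},
\end{align*}
where in the last step I used $y \ge x_1$ and $\beta > 0$ so that $y^{-\beta} \le x_1^{-\beta}$.

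Next I would feed this second-moment bound into Kolmogorov's continuity theorem (see, e.g., \cite[Chapter 1, Theorem 2.1]{revuz_yor} or \cite{karatzas_shreve}): if $\expect{|F(y') - F(y)|^\alpha} \le C |y' - y|^{1 + \eta}$ for some $\alpha, \eta > 0$, then $F$ has a modification that is locally H\"older continuous of every order strictly less than $\eta/\alpha$. Here $\alpha = 2$ and $1 + \eta = 1 + d$, so $\eta = d$ and $\eta/\alpha = d/2$. Hence $F$, and therefore $X_t((x_1, \cdot])$, has a version that is H\"older-$\gamma$ continuous for every $\gamma < d/2$, which is exactly the claim. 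One small bookkeeping point: the constant in the moment bound depends on $x_1$ (through $x_1^{-\beta}$), but since $x_1$ is fixed this is harmless; alternatively one can cover $\Rplus \setminus \{0\}$ by countably many intervals bounded away from $0$ and patch the local modifications together, which also shows the version is jointly measurable.

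The main (and essentially only) obstacle is already dispatched by the earlier development: one needs the correct exponent $1 + d$ in the second-moment estimate, and this is precisely what the Schramm--Zhou two-point bound of Proposition \ref{ExpectationBound} buys us via Proposition \ref{IntegralBound}. A naive bound using only the one-point moments $\expect{M_t(x)} = \Qxprob{T_x > t} x^{-\beta}$ and Cauchy--Schwarz would give only $\expect{(F(y') - F(y))^2} \le |y' - y| \int_y^{y'} \expect{M_t(x)^2}\,dx$, and $\expect{M_t(x)^2}$ need not even be finite (indeed $M_t(x)$ is not $L^2$-bounded in general), so the two-point martingale really is what makes the H\"older regularity work. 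Everything else is a routine invocation of Kolmogorov--Chentsov.
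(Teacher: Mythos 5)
Your proof is correct and takes exactly the same route as the paper: apply Proposition \ref{IntegralBound} with the deterministic time $t$ to get the second-moment bound $\expect{X_t((y,y'])^2} \le c_* x_1^{-\beta}(y'-y)^{1+d}$, then read off the H\"older exponent $\eta/\alpha = d/2$ from Kolmogorov--Chentsov. The paper states this in one line; you have simply filled in the (routine, accurate) details, including the useful observation that the two-point martingale bound is what makes the exponent $1+d>1$ available when one-point moment estimates would not suffice.
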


\begin{proof}
This follows by an application of Proposition \ref{IntegralBound} and the Kolmogorov-Centsov Theorem.
\end{proof}

Using Proposition \ref{IntegralBound}, we are able to show that $X_t$ does not assign large amounts of mass to small intervals.

\begin{corollary}\label{SmallMass}
Let $I = (x_1, x_2]$ with $0 < x_1 < x_2 < \infty$. Let $\{ I_{k,n} \}_{1 \leq k \leq 2^n}$ be a partition of $I$ into $2^n$ subintervals of length $|I| 2^{-n}$. Then for $\alpha$ with $0 < \alpha < d/2$,
\begin{align*}
\prob{\max_{1 \leq k \leq 2^n} \sup_{t \geq 0} X_t \left( I_{k, n}
\right) \geq 2^{-n\alpha} \textnormal{ for infinitely many } n} = 0.
\end{align*}
\end{corollary}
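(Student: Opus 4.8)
The plan is to combine the tail bound for a single interval that follows from Proposition \ref{IntegralBound} with a union bound over the $2^n$ pieces, and then apply Borel--Cantelli. First I would fix $n$ and a single subinterval $I_{k,n}$. Since $\sup_{t \geq 0} X_t(I_{k,n})$ is not itself of the form $X_\tau(I_{k,n})$ for a single stopping time, I need a way to control the supremum over $t$. The natural route is to note that by Lemma \ref{MEndsAtZero} (together with Corollary \ref{zeroLebesgue} and Fubini, or more directly via the continuity of $X_t(I_{k,n})$ in $t$) the process $t \mapsto X_t(I_{k,n})$ is a non-negative supermartingale that tends to $0$, so its supremum is attained (or approached) and can be handled by a maximal inequality, or one can simply use that for a non-negative supermartingale $\prob{\sup_{t\ge 0} Y_t \geq \lambda} \leq \expect{Y_0}/\lambda$; however here I want an $L^2$-type bound, so instead I would use Doob's maximal inequality applied to a suitable martingale dominating $X_t(I_{k,n})$, or better, use the fact that $\sup_{t}X_t(I_{k,n}) = X_{\tau^*}(I_{k,n})$ along an increasing sequence of stopping times $\tau_m$ approximating the time the sup is (nearly) achieved, and apply Proposition \ref{IntegralBound} with Fatou. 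In any case the upshot I want is
\begin{align*}
\expect{\Bigl( \sup_{t \geq 0} X_t(I_{k,n}) \Bigr)^2} \leq C \, x_1^{-\beta} \, |I_{k,n}|^{1+d} = C \, x_1^{-\beta} \, |I|^{1+d} \, 2^{-n(1+d)},
\end{align*}
possibly with an extra harmless constant from Doob's inequality.

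Next, by Chebyshev (Markov applied to the square),
\begin{align*}
\prob{\sup_{t \geq 0} X_t(I_{k,n}) \geq 2^{-n\alpha}} \leq 2^{2n\alpha} \, \expect{\Bigl( \sup_{t \geq 0} X_t(I_{k,n}) \Bigr)^2} \leq C \, x_1^{-\beta} \, |I|^{1+d} \, 2^{-n(1 + d - 2\alpha)}.
\end{align*}
Then I would union bound over the $2^n$ choices of $k$:
\begin{align*}
\prob{\max_{1 \leq k \leq 2^n} \sup_{t \geq 0} X_t(I_{k,n}) \geq 2^{-n\alpha}} \leq 2^n \cdot C \, x_1^{-\beta} \, |I|^{1+d} \, 2^{-n(1 + d - 2\alpha)} = C \, x_1^{-\beta} \, |I|^{1+d} \, 2^{-n(d - 2\alpha)}.
\end{align*}
Since $\alpha < d/2$ we have $d - 2\alpha > 0$, so this is summable in $n$, and Borel--Cantelli gives that the event in the statement has probability zero.

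The main obstacle I expect is the first step: passing from the fixed-stopping-time bound of Proposition \ref{IntegralBound} to a bound on the supremum over all $t \geq 0$ of $X_t(I_{k,n})$. Proposition \ref{IntegralBound} only controls $\expect{X_\tau(I)^2}$ for a single almost surely finite stopping time $\tau$, not the running supremum. One clean way around this: for each $m$ let $\tau_m$ be a stopping time at which $X_t(I_{k,n})$ comes within $1/m$ of its supremum before time $m$ (using that $X_t(I_{k,n})$ has a continuous version in $t$, or at least is right-continuous, so such a stopping time exists and is a.s. finite); then $X_{\tau_m}(I_{k,n}) \to \sup_{t \geq 0} X_t(I_{k,n})$ a.s. as $m \to \infty$ (here one also uses $X_t(I_{k,n}) \to 0$ as $t \to \infty$, from Corollary \ref{zeroLebesgue}, so the global sup is the sup over a finite window for $m$ large), and Fatou together with Proposition \ref{IntegralBound} yields the desired $L^2$ bound on the supremum with no extra constant at all. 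With that in hand the rest is the routine Chebyshev/union-bound/Borel--Cantelli computation sketched above.
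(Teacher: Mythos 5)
Your overall strategy (maximal inequality in $L^2$, Chebyshev, union bound over the dyadic pieces, Borel--Cantelli) is exactly the one the paper uses, and the second half of your argument --- from the display bounding $\prob{\sup_t X_t(I_{k,n}) \geq 2^{-n\alpha}}$ onward --- matches the paper line for line. The gap is precisely where you say it is, in passing from Proposition~\ref{IntegralBound} to a bound on $\sup_{t\ge 0} X_t(I_{k,n})$, and your proposed fix does not close it.

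Two problems with the fix. First, the ``stopping times'' $\tau_m$ you describe (the time at which $X_t$ comes within $1/m$ of its running supremum over $[0,m]$) are not stopping times: deciding whether $X_t$ is within $1/m$ of $\sup_{s\le m}X_s$ requires knowing the future of the path. You could instead use genuine hitting times $\sigma_\lambda=\inf\{t: X_t(I_{k,n})\ge\lambda\}\wedge T_I$, but that --- and indeed any version of this approach, including invoking Doob's maximal inequality directly on $X_t(I_{k,n})$ --- requires at least right-continuity of $t\mapsto X_t(I_{k,n})$, both for $\sigma_\lambda$ to be a stopping time and for $X_{\sigma_\lambda}\ge\lambda$ on $\{\sigma_\lambda<\infty\}$. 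And this is the second, more serious problem: continuity of $X_t(I)$ is Proposition~\ref{XContinuous}, which comes \emph{after} Corollary~\ref{SmallMass} and whose proof \emph{uses} Corollary~\ref{SmallMass}. So appealing to continuity (or a continuous version) here is circular. Note that continuity of $X_t(I)=\int_I M_t(x)\,dx$ is not automatic even though each $M_t(x)$ is a.s.\ continuous for a.e.\ $x$: passing the $t$-limit through the $x$-integral needs a uniform integrability input which is not available at this stage.

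The paper sidesteps this entirely by working with the $\epsilon$-stopped processes $M_t^\epsilon(x)=M_{t\wedge T_x^\epsilon}(x)$. For fixed $\epsilon$ these are \emph{bounded} (by $\epsilon^{-\beta}$) continuous martingales, so $\int_I M_t^\epsilon(x)\,dx$ is a bounded continuous martingale with no regularity issue, and Doob's $L^2$ maximal inequality applies cleanly to its square. The terminal value $M_\infty^\epsilon(x)=\epsilon^{-\beta}\indicate{x\in C^\epsilon}$ is then controlled by the two-point bound of Corollary~\ref{IndicatorBound}, yielding $\prob{\sup_t\int_I M_t^\epsilon\,dx>\lambda}\le c_* x_1^{-\beta}\lambda^{-2}|I|^{1+d}$ uniformly in $\epsilon$. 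Finally, the a.s.\ inequality $M_t(x)\le\liminf_{\epsilon\downarrow 0}M_t^\epsilon(x)$ for all $x,t$ simultaneously, together with Fatou, transfers the bound to $\sup_t X_t(I)$. Your sketch (``use Doob's maximal inequality applied to a suitable martingale dominating $X_t(I_{k,n})$'') gestures at the right idea, but the correct dominating objects are the stopped bounded martingales, and the domination only holds in the liminf sense, which is why the Fatou step is needed.
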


\begin{proof}
For a fixed $\epsilon > 0$, recall that the process $\te{M}(x)$ is, for each $x > 0$, a postive \textit{martingale} that is bounded above by $\epsilon^{-\beta}$. By Fubini's Theorem, it follows that for an interval $I \subset \Rplus$ the process
\begin{align*}
\int_I \te{M}(x) \, dx
\end{align*}
is also positive martingale that is bounded above by $\epsilon^{-\beta} \leb{I}$. The square of the latter process is therefore a submartingale. Using that $M_{\infty}(x) = \epsilon^{-\beta} \indicate{x \in \ep{C}}$ and the bound
\begin{align*}
\expect{M^{\epsilon}_{\infty}(x) M^{\epsilon}_{\infty}(y)} = \epsilon^{-2\beta} \prob{x,y \in \ep{C}} \leq c x^{-\beta} (y-x)^{-\beta}
\end{align*}
of Corollary \ref{IndicatorBound}, an application of Doob's maximal inequality gives that
\begin{align*}
\prob{\sup_{t \geq 0} \int_I \te{M}(x) \, dx > \lambda} \leq \lambda^{-2} \int_I \int_I \expect{\ep{M_{\infty}}(x) \ep{M_{\infty}}(y)} \, dx \, dy \leq c_* x_1^{-\beta} \lambda^{-2} \leb{I}^{1+d},
\end{align*}
where $c_*$ is as in Proposition \ref{IntegralBound}. We will now extend the same bound to the integral of $M_t(x)$. First, recalling that
\begin{align*}
\prob{\lim_{\epsilon \downarrow 0} \xe{T} = T_x \,\, \forall \, x > 0 } = 1,
\end{align*}
and that $\te{M}(x) = M_{t \wedge \xe{T}}(x)$, we have
\begin{align}\label{smallMass2}
\prob{\liminf_{\epsilon \downarrow 0} \te{M}(x) \geq M_t(x) \,\, \forall \, x > 0, t \geq 0 } = 1.
\end{align}
In fact, we actually have that
\begin{align*}
\liminf_{\epsilon \downarrow 0} \te{M}(x) = \lim_{\epsilon \downarrow 0} \te{M}(x) = M_t(x)
\end{align*}
so long as $x \not \in C^{\infty}$. The only problem occurs when $x \in C_{\infty}$ and $t \geq T_x$, in which case $M_t(x)$ is defined to be zero, whereas $\liminf_{\epsilon \downarrow 0} \te{M}(x) = \infty$. Regardless, \eqref{smallMass2} is all that we require, and using it and Fatou's Lemma we have,
\begin{align*}
\prob{\int_I M_t(x) \, dx \leq \liminf_{\epsilon \downarrow 0} \int_I \te{M}(x) \, dx \,\, \forall \, t \geq 0} = 1.
\end{align*}
It is easy to verify the deterministic fact that
\begin{align*}
\sup_{t \geq 0} \liminf_{\epsilon \downarrow 0} \int_I \te{M}(x) \, dx \leq \liminf_{\epsilon \downarrow 0} \sup_{t \geq 0} \int_I \te{M}(x) \, dx,
\end{align*}
so we have that
\begin{align*}
\prob{\sup_{t \geq 0} \int_I M_t(x) \, dx \leq \liminf_{\epsilon \downarrow 0} \sup_{t \geq 0} \int_I \te{M}(x) \, dx } = 1.
\end{align*}
Therefore, on this last event of full probability,
\begin{align*}
\indicate{\sup_{t \geq 0} \int_I M_t(x) \, dx > \lambda} \leq \liminf_{\epsilon \downarrow 0} \indicate{\sup_{t \geq 0} \int_I \te{M}(x) \, dx > \lambda}.
\end{align*}
Consequently, via another application of Fatou's Lemma,
\begin{align}
\prob{\sup_{t \geq 0} \int_I M_t(x) \, dx > \lambda} &\leq \expect{\liminf_{\epsilon \downarrow 0} \indicate{\sup_{t \geq 0} \int_I \te{M}(x) \, dx > \lambda} } \notag \\
&\leq \liminf_{\epsilon \downarrow 0} \prob{\sup_{t \geq 0} \int_I \te{M}(x) \, dx > \lambda} \notag \\
&\leq c_* x_1^{-\beta} \lambda^{-2} \leb{I}^{1+d}. \label{smallMassBound}
\end{align}

From \eqref{smallMassBound} the proof of the lemma follows easily. We have
\begin{align*}
\prob{\max_{1 \leq k \leq 2^n} \sup_{t \geq 0} X_t(I_{k,n}) \, dx \geq 2^{-n\alpha}} &= \prob{\bigcup_{k=1}^{2^n} \left \{ \sup_{t \geq 0} X_t(I_{k,n}) \geq 2^{-n \alpha} \right \} } \\
&\leq \sum_{k=1}^{2^n} \prob{\sup_{t \geq 0} X_t(I_{k,n}) \geq 2^{-n \alpha} } \\
&\leq \sum_{k=1}^{2^n} 2^{2n \alpha} c_* x_1^{-\beta} \left(  \left| I \right| 2^{-n} \right)^{1+d} \\
&= c_* x_1^{-\beta} \left| I \right|^{1+d} 2^{-n(d - 2 \alpha)},
\end{align*}
and then an application of the Borel-Cantelli lemma completes the proof.

\end{proof}

We will also be interested in how the process $X_t(I)$ evolves over time. It is clear intuitively what is happening. When the tip of the SLE curve is not in $I$, the local martingales $M_t(x)$ do not grow large and therefore behave like martingales. Since $X_t(I)$ is an integral of these martingales (which are all positive), it follows that $X_t(I)$ also behaves like a martingale when the tip is not on $I$. This will force that the non-decreasing part of the Doob-Meyer decomposition for $X_t(I)$ can only be growing when $\gamma(t) \in I$, which is to be expected. The rest of this section puts this intuition on a solid foundation.

\begin{definition}
For an interval $I \subset \Rplus$, define the process $Y_t(I), t \geq 0$, by
\begin{align*}
Y_t(I) := \inf_{x \in I \backslash K_t} h_t(x).
\end{align*}
\end{definition}

Hence $Y_t(I)$ is keeping track of the leftmost unswallowed point of $I$ under the flow. It follows that $Y_t(I)$ is almost surely continuous in $t$, and that it goes to zero almost surely as $t \uparrow T_I$. The times $t < T_I$ for which $Y_t(I) = 0$ are exactly the times at which $\gamma(t) \in \overline{I}$ (note we have to take the closure since $I$ may not contain the endpoints of the interval). Using this process we define the following stopping times:

\begin{definition}
Fix an interval $I \subset \R_+$. For $\epsilon > 0$, recursively define the sequence of stopping times $\ep{\tau_n}$ by $\ep{\tau_0} = 0$ and
\begin{align*}
\ep{\tau_{2n+1}} &:= \inf \bigl \{ t > \ep{\tau_{2n}} : Y_t(I) \leq \epsilon \bigr \}, \\
\ep{\tau_{2n+2}} &:= \inf \bigl \{ t > \ep{\tau_{2n+1}} : Y_t(I) > 2\epsilon \bigr \}.
\end{align*}
\end{definition}

Hence intervals of time $\left( \ep{\tau_{2n}}, \ep{\tau_{2n+1}} \right)$ are downcrossings of the $Y_t(I)$ process from $2 \epsilon$ to $\epsilon$, and intervals $\left( \ep{\tau_{2n+1}}, \ep{\tau_{2n+2}} \right)$ are upcrossings from $\epsilon$ back to $2 \epsilon$. Therefore only upcrossings can contain the times at which $\gamma(t) \in I$, and on the downcrossings the set $I \cap K_t$ of swallowed points in $I$ is not growing. In other words, if $\ep{\tau_{2n}} < t < \ep{\tau_{2n+1}}$ then
\begin{align*}
I \backslash K_t = I \backslash K_{\ep{\tau_{2n}}}.
\end{align*}
During a downcrossing the $h_t(x)$ are uniformly bounded below by $\epsilon$, hence
\begin{align*}
\sup_{\ep{\tau_{2n}} < t < \ep{\tau_{2n+1}}} \sup_{x \in I \backslash K_t} M_t(x) \leq \epsilon^{-\beta},
\end{align*}
by inequality \eqref{MtIneq}. We also have the following property of the stopping times:

\begin{lemma}\label{tauToInfty}
For a fixed $\epsilon > 0$, $\ep{\tau_n} \to \infty$ almost surely as $n \to \infty$.
\end{lemma}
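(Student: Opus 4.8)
\emph{Proof proposal.} The plan is to argue by contradiction, relying only on the almost sure continuity of the path $t \mapsto Y_t(I)$ recorded just before the lemma, together with the elementary fact that a continuous real path cannot oscillate between two distinct levels infinitely often in a bounded time interval. Fix $\epsilon > 0$. First I would extend $Y$ past the swallowing time by declaring $Y_t(I) := 0$ for $t \geq T_I$; since $Y_t(I) \to 0$ as $t \uparrow T_I$, this makes $t \mapsto Y_t(I)$ almost surely continuous on all of $[0,\infty)$. By construction the $\ep{\tau_n}$ are non-decreasing in $n$, and once some $\ep{\tau_N} = +\infty$ the recursion forces $\ep{\tau_n} = +\infty$ for all $n \geq N$; so $\tau := \lim_{n \to \infty} \ep{\tau_n}$ exists in $[0,\infty]$ and it is enough to show $\tau = \infty$ almost surely.

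Assume for contradiction that $\prob{\tau < \infty} > 0$ and work on $\{\tau < \infty\}$. There every $\ep{\tau_n}$ is finite (otherwise $\tau = +\infty$). For $n \geq 1$, the infimum definitions of $\ep{\tau_{2n}}$ and $\ep{\tau_{2n+1}}$, combined with continuity of $Y$, give $Y_{\ep{\tau_{2n}}}(I) = 2\epsilon$ and $Y_{\ep{\tau_{2n+1}}}(I) = \epsilon$; since a continuous path needs positive time to travel from level $\epsilon$ to level $2\epsilon$, the sequence $\ep{\tau_n}$ is strictly increasing, so $\ep{\tau_n} \uparrow \tau$ with all terms below $\tau$. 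Letting $n \to \infty$, the path $Y_{\cdot}(I)$ then equals $2\epsilon$ at the times $\ep{\tau_{2n}} \uparrow \tau$ and equals $\epsilon$ at the times $\ep{\tau_{2n+1}} \uparrow \tau$, so $\lim_{t \uparrow \tau} Y_t(I)$ fails to exist --- contradicting the continuity of $Y$ at the finite point $\tau$. Hence $\tau = \infty$ almost surely, which is exactly the claim.

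I expect the argument to be entirely soft, with no genuine obstacle. The only spots needing a sentence of care are the two level identifications $Y_{\ep{\tau_{2n}}}(I) = 2\epsilon$ and $Y_{\ep{\tau_{2n+1}}}(I) = \epsilon$ (which follow from continuity of $Y$ and the fact that between consecutive crossing times $Y$ stays strictly on one side of the relevant level), and the legitimacy of extending $Y$ continuously by $0$ past $T_I$. If one prefers to avoid that extension, one can instead note that $Y_t(I) < \epsilon$ throughout a left neighborhood of $T_I$, so no even-indexed $\ep{\tau_{2n}}$ can lie there, which already forces $\tau < T_I$ --- a point where $Y$ is genuinely continuous --- and the same contradiction goes through.
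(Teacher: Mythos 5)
Your proposal is correct and takes essentially the same approach as the paper: both rest on the continuity of $Y_t(I)$ together with the elementary fact that a continuous real path cannot oscillate between two fixed levels ($\epsilon$ and $2\epsilon$) infinitely often over a bounded time window. Your explicit contradiction framing, with the level identifications $Y_{\tau_{2n}^\epsilon}(I)=2\epsilon$ and $Y_{\tau_{2n+1}^\epsilon}(I)=\epsilon$, is a bit more careful than the paper's one-sentence remark and even sidesteps invoking $T_I<\infty$ directly, but the underlying mathematical content is identical.
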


\begin{proof}
The $Y_t(I)$ process can only have finitely many upcrossings or downcrossings since it is continuous, and since the process goes to zero almost surely there is a last $n$ such that $\ep{\tau_{2n+1}} \leq T_I$. After that $\ep{\tau_m} = \infty$ for $m > 2n+1$.
\end{proof}

\begin{lemma}\label{contAway}
For a fixed $\epsilon > 0$ and for any $n \geq 0$, the process $\ep{Z_n}(t) := X_{t \wedge \ep{\tau_{2n+1}}}(I) - X_{t \wedge \ep{\tau_{2n}}}(I)$ is a continuous martingale.
\end{lemma}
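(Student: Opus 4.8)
The plan is to show that $\ep{Z_n}(t) = X_{t \wedge \ep{\tau_{2n+1}}}(I) - X_{t \wedge \ep{\tau_{2n}}}(I)$ is a martingale by exhibiting it as an integral (over $x \in I$) of genuine martingales, and then arguing uniform integrability so that Fubini applies. First I would observe that on the stochastic interval $[\ep{\tau_{2n}}, \ep{\tau_{2n+1}}]$ we have $I \backslash K_t = I \backslash K_{\ep{\tau_{2n}}}$ (the swallowed part of $I$ does not grow during a downcrossing), and that on this interval $Y_t(I) > \epsilon$, so by inequality \eqref{MtIneq} every $M_t(x)$ with $x \in I \backslash K_t$ satisfies $M_t(x) \leq \epsilon^{-\beta}$. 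Thus for each fixed $x \in I$, the stopped process $x \mapsto M_{t \wedge \ep{\tau_{2n+1}}}(x) - M_{t \wedge \ep{\tau_{2n}}}(x)$ is a local martingale (difference of two stopped versions of the local martingale $M_t(x)$, both of which are stopped local martingales hence local martingales) that is uniformly bounded by $\epsilon^{-\beta}$ in the relevant time range, and in fact it is identically zero outside $I \backslash K_{\ep{\tau_{2n}}}$ for the relevant $x$; being bounded, it is a genuine martingale.

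Next I would integrate over $x \in I$. Writing $\ep{Z_n}(t) = \int_I \left( M_{t \wedge \ep{\tau_{2n+1}}}(x) - M_{t \wedge \ep{\tau_{2n}}}(x) \right) dx$, I want to pull the conditional expectation inside the integral via the conditional Fubini theorem. To justify this I need an integrable dominating bound uniform over $t$: the integrand is bounded in absolute value by $2 \epsilon^{-\beta} \indicate{x \in I}$ (using the class-$\mathcal{D}$-type bound above and that $M_s(x) \geq 0$, so the difference of two stopped values is at most $\epsilon^{-\beta}$ once the process has grown to that level, and the contributions where $M$ is smaller are controlled similarly), and $\int_I 2\epsilon^{-\beta} \, dx = 2 \epsilon^{-\beta} \leb{I} < \infty$. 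Hence for $0 \leq s < t$, $\condexpect{\ep{Z_n}(t)}{\F_s} = \int_I \condexpect{M_{t \wedge \ep{\tau_{2n+1}}}(x) - M_{t \wedge \ep{\tau_{2n}}}(x)}{\F_s} \, dx = \int_I \left( M_{s \wedge \ep{\tau_{2n+1}}}(x) - M_{s \wedge \ep{\tau_{2n}}}(x) \right) dx = \ep{Z_n}(s)$, proving the martingale property. Integrability of $\ep{Z_n}(t)$ itself follows from the same dominating bound.

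For continuity, I would use that $t \mapsto X_t(I) = \int_I M_t(x)\,dx$ is continuous in $t$ whenever the tip of the curve is away from $\overline{I}$ (equivalently $Y_t(I) > 0$), which follows from the continuity of each $M_t(x)$ together with dominated convergence using the uniform bound $M_t(x) \leq h_t(x)^{-\beta}$ valid on compact time intervals where $Y_t(I)$ stays bounded below. More directly, on the downcrossing interval $(\ep{\tau_{2n}}, \ep{\tau_{2n+1}})$ we have $M_t(x) \leq \epsilon^{-\beta}$ for all relevant $x$, so dominated convergence gives continuity of $t \mapsto X_{t \wedge \ep{\tau_{2n+1}}}(I)$ there; at the endpoints continuity follows because $Y_t(I)$ is continuous and the stopping times are hitting times, and $X_{t \wedge \ep{\tau_{2n}}}(I)$ is constant for $t \geq \ep{\tau_{2n}}$. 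Combining, $\ep{Z_n}(t)$ is continuous on $[0, \infty)$.

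The main obstacle I anticipate is the justification of interchanging conditional expectation with the $dx$-integral — i.e. producing a clean $t$-uniform integrable dominating function for the differenced, stopped process. The naive bound $M_t(x) \leq h_t(x)^{-\beta}$ is not uniformly integrable in $x$ near the swallowing times, so the argument genuinely relies on the downcrossing structure: the key point is precisely that on $[\ep{\tau_{2n}}, \ep{\tau_{2n+1}}]$ the relevant $M_t(x)$ never exceed $\epsilon^{-\beta}$, which is what makes the stopped local martingales into bounded (hence true) martingales and supplies the integrable dominating function $2\epsilon^{-\beta}\indicate{x \in I}$. Once that observation is in place, everything else is a routine application of conditional Fubini and dominated convergence.
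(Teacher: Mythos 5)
Your martingale argument is the same as the paper's: restrict the integral to $I \backslash K_{\ep{\tau_{2n}}}$, observe the differenced stopped process is a local martingale uniformly bounded by $2\epsilon^{-\beta}$ (hence a true martingale), and apply Fubini with the integrable dominating function $2\epsilon^{-\beta}\indicate{x\in I}$. Your continuity argument is where you diverge: you prove it directly via dominated convergence using the $\epsilon^{-\beta}$ bound on the downcrossing interval, together with a separate check that nothing bad happens at the endpoints. The paper instead observes that $\ep{Z_n}(t)$ is a martingale with respect to the Brownian filtration $\Ft$, and invokes the martingale representation theorem (citing Protter) to get continuity for free. Your route is more hands-on and works for any filtration; it does require the additional endpoint checks you sketch (left-continuity at $\ep{\tau_{2n+1}}$, and handling the degenerate case where $\ep{\tau_{2n}} = \infty$, which the paper dispatches explicitly by noting the process is identically zero once $n$ exceeds the last index $N$ with $\ep{\tau_{2N+1}}\le T_I$). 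The paper's route is shorter but leans on the Brownian filtration structure. Both are valid; the trade-off is elementary-but-longer versus slick-but-filtration-specific.
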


\begin{proof}
First suppose that $N$ is the largest integer such that $\ep{\tau_{2N+1}} \leq T_I$. Then for $M > N$ we have $\ep{\tau_{2M}} = \ep{\tau_{2M+1}} = \infty$ and therefore $\ep{Z_M}(t)$ is identically zero.

For $n \leq N$, observe that we may write
\begin{align*}
X_{t \wedge \ep{\tau_{2n+1}}}(I) - X_{t \wedge \ep{\tau_{2n}}}(I) &= \int_I M_{t \wedge \ep{\tau_{2n+1}}}(x) - M_{t \wedge \ep{\tau_{2n}}}(x) \, dx \\
&= \int_{I \backslash K_{\ep{\tau_{2n}}}} M_{t \wedge \ep{\tau_{2n+1}}}(x) - M_{t \wedge \ep{\tau_{2n}}}(x) \, dx.
\end{align*}
For the integrand we have the bound
\begin{align*}
\sup_{t > 0} \sup_{x \in I \backslash K_{\ep{\tau_{2n}}}} \leb{M_{t \wedge \ep{\tau_{2n+1}}}(x) - M_{t \wedge \ep{\tau_{2n}}}(x)} \leq 2 \epsilon^{-\beta}
\end{align*}
(the bound is for all $t$ since the process is clearly flat for $t \not \in (\ep{\tau_{2n}}, \ep{\tau_{2n+1}})$). Hence for each $x \in I \backslash K_{\ep{\tau_{2n}}}$ the integrand is a local martingale that is uniformly bounded above, and therefore is a martingale. An application of Fubini's Theorem shows that $\ep{Z_n}(t)$ is also a martingale. To get the continuity, observe that $\ep{Z_n}(t)$ is a martingale with respect to the Brownian filtration $\Ft$, and therefore by the martingale representation theorem (see, for example, \cite[Chapter IV, Theorem 43 and Corollary 1]{protter}) is automatically continuous.
\end{proof}

\begin{corollary}\label{contToFirstHitting}
Fix an interval $I \subset \Rplus$. Let $\tau_I = \inf \{ t \geq 0 : \gamma(t) \in \overline{I} \}$. Then the process $X_{t \wedge \tau_I}(I)$ is a continuous martingale.
\end{corollary}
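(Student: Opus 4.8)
The plan is to deduce this from Lemma \ref{contAway} by summing the contributions of the successive downcrossing and upcrossing intervals up to time $\tau_I$, letting $\epsilon \downarrow 0$. The key observation is that before the curve first touches $\overline{I}$, the process $Y_t(I)$ stays strictly positive, so $\tau_I$ is the first time $Y_t(I)$ hits $0$; in particular, for each $\epsilon > 0$ we have $\tau_I = \lim_n \ep{\tau_n}$ restricted to times before any upcrossing completes, and more usefully $\ep{\tau_1} \uparrow \tau_I$ as $\epsilon \downarrow 0$ (since $Y_t(I)$ is continuous and positive on $[0,\tau_I)$, the first time it drops below $\epsilon$ converges up to the first time it reaches $0$). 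Thus $X_{t \wedge \ep{\tau_1}}(I) = \ep{Z_0}(t)$ is, by Lemma \ref{contAway}, a continuous martingale for each $\epsilon$, and we want to pass to the limit $\epsilon \downarrow 0$ to conclude that $X_{t\wedge\tau_I}(I)$ is one too.

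First I would record that $\ep{\tau_1} \uparrow \tau_I$ almost surely as $\epsilon \downarrow 0$: this is immediate from continuity of $t \mapsto Y_t(I)$ and the fact that $Y_t(I) > 0$ precisely for $t \in [0,\tau_I)$ among times before $T_I$. Next, by continuity of $t \mapsto M_t(x)$ for $t < T_x$ (and hence of $t \mapsto X_t(I)$, using Corollary \ref{XtHolder} or dominated convergence in the integral), we get $X_{t \wedge \ep{\tau_1}}(I) \to X_{t \wedge \tau_I}(I)$ pointwise as $\epsilon \downarrow 0$ for each fixed $t$. To upgrade this convergence of martingales to a martingale limit, I would establish a uniform integrability bound: by Proposition \ref{IntegralBound} applied to the almost surely finite stopping time $t \wedge \ep{\tau_1}$,
\begin{align*}
\sup_{\epsilon > 0} \expect{X_{t \wedge \ep{\tau_1}}(I)^2} \leq c_* x_1^{-\beta} |I|^{1+d} < \infty,
\end{align*}
so the family $\{ X_{t \wedge \ep{\tau_1}}(I) : \epsilon > 0 \}$ is bounded in $L^2$, hence uniformly integrable, for each fixed $t$. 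Then $L^1$ convergence holds, and the martingale property passes to the limit: for $0 \le s < t$ and $A \in \F_s$, $\expect{X_{t\wedge\ep{\tau_1}}(I)\indicate{A}} = \expect{X_{s\wedge\ep{\tau_1}}(I)\indicate{A}}$ for all $\epsilon$, and letting $\epsilon \downarrow 0$ gives $\expect{X_{t\wedge\tau_I}(I)\indicate{A}} = \expect{X_{s\wedge\tau_I}(I)\indicate{A}}$. Continuity of $X_{t\wedge\tau_I}(I)$ in $t$ follows either from the martingale representation theorem (as in Lemma \ref{contAway}, since this is a martingale with respect to the Brownian filtration) or directly from continuity of each $M_t(x)$ together with the small-mass control of Corollary \ref{SmallMass}, which rules out jumps.

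The main obstacle I anticipate is the interchange of limits needed to conclude that the pointwise-in-$t$ limit is actually a \emph{process-level} continuous martingale rather than merely a collection of consistent marginals; concretely, one must be careful that $X_{\cdot \wedge \tau_I}(I)$ is cadlag (indeed continuous) so that the martingale property at all deterministic times upgrades to the genuine supermartingale/martingale notion used with the \textit{usual conditions}. The cleanest route is probably to argue continuity of $X_{t\wedge\tau_I}(I)$ first --- using that $t \mapsto X_t(I)$ is continuous on $[0,\tau_I)$ because $M_t(x)$ is continuous for $t < T_x \ge \tau_I$ and the integrand is dominated on compact time intervals below $\tau_I$ --- and that the left limit at $\tau_I$ exists and equals $X_{\tau_I}(I)$, so no jump can occur at the boundary; then the $L^1$-limit-of-martingales argument above finishes it. An alternative, slicker approach avoiding the limit entirely: observe $I \backslash K_{t \wedge \tau_I} = I$ for all $t$ (nothing in $I$ is swallowed before the curve reaches $\overline I$), so $X_{t \wedge \tau_I}(I) = \int_I M_{t\wedge\tau_I}(x)\,dx$ where each $M_{t\wedge\tau_I}(x)$ is a local martingale bounded by $\sup_{x\in I} x^{-\beta} < \infty$ uniformly on $\{t \le \tau_I\}$ via \eqref{MtIneq} and $h_{t\wedge\tau_I}(x) \ge Y_{t\wedge\tau_I}(I)$ — wait, $Y$ can still approach $0$, so one does need the downcrossing decomposition — hence I would present the downcrossing-summation argument as the primary proof.
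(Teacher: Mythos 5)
Your proposal reaches the same conclusion via a genuinely different approximating stopping time, and the overall scaffolding (bounded local martingales $\Rightarrow$ Fubini $\Rightarrow$ uniform integrability $\Rightarrow$ limit; continuity from martingale representation) matches the paper's. The paper does not reuse Lemma \ref{contAway}; it instead introduces $\ep{T_I} := \inf\{t : \sup_{x\in I} M_t(x) \geq \epsilon^{-\beta}\}$ and uses the Koebe $1/4$ estimate (Lemma \ref{ctDistance}) to relate $\ep{T_I}$ to $\ep{\tau_I} := \inf\{t : \dist(I,\gamma[0,t]) \leq 4\epsilon\}$, which manifestly increases to $\tau_I$. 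One advantage of that choice is that it behaves correctly on the event $\{\tau_I = \infty\}$: for $\epsilon$ small, $\dist(I,\gamma) > 4\epsilon$ forces $\ep{T_I} = \infty$, so $\ep{T_I} \to \infty = \tau_I$ there as well. Your $\ep{\tau_1}$ (first time $Y_t(I) \leq \epsilon$) does not have this feature, which is the one genuine gap in your write-up: you assert $\ep{\tau_1} \uparrow \tau_I$, but as the paper's own remark after this corollary warns, on $\{\tau_I = \infty\}$ the first time $Y_t(I)$ reaches $0$ is the finite swallowing time $T_I$, so $\ep{\tau_1} \uparrow T_I \wedge \tau_I$, not $\tau_I$. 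The conclusion you want is still recoverable --- since $X_t(I) = 0$ for $t \geq T_I$, one has $X_{t\wedge\tau_I\wedge T_I}(I) = X_{t\wedge\tau_I}(I)$ --- but that identification requires knowing $X_{s}(I) \to 0$ as $s \uparrow T_I$ on this event, and you should not quietly appeal to Proposition \ref{XContinuous} here because it is proved \emph{after} this corollary and in fact uses it. Your use of Proposition \ref{IntegralBound} for the uniform integrability is fine and makes the limiting step more explicit than the paper's terse ``then take a limit.'' Finally, the ``slicker approach'' you sketch and discard is indeed unsound for the same reason: $I \backslash K_{t\wedge\tau_I} = I$ fails once $I$ is swallowed without being touched, and you were right to abandon it.
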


\begin{proof}
Let $\ep{\tau_I} := \inf \{ t \geq 0 : \dist(I, \gamma[0,t]) \leq 4 \epsilon \}$. Then clearly $\ep{\tau_I}$ increases to $\tau_I$ almost surely as $\epsilon \downarrow 0$. Let
\begin{align*}
\ep{T_I} := \inf \left \{ t \geq 0 : \sup_{x \in I} M_t(x) \geq \epsilon^{-\beta} \right \} = \inf_{x \in I} \ep{T_x}.
\end{align*}
If $\ep{T_I} < \infty$, then since $M_t(x)$ is analytic in $x$ there must be an $x_0 \in \overline{I}$ such that $M_{\ep{T_I}}(x_0) \geq \epsilon^{-\beta}$. Lemma \ref{ctDistance} then implies that $\dist(I, \gamma[0, \ep{T_I}]) \leq 4 \epsilon$, hence $\ep{\tau_I} \leq \ep{T_I}$. We will actually show that the process $X_{t \wedge \ep{T_I}}(I)$ is a continuous martingale for every $\epsilon > 0$, and then take a limit as $\epsilon \downarrow 0$ to prove the statement. Note that by definition of $\ep{T_I}$,
\begin{align*}
\sup_{t \geq 0} \sup_{\epsilon > 0} M_{t \wedge \ep{T_I}}(x) \leq \epsilon^{-\beta}.
\end{align*}
Hence the $M_{t \wedge \ep{T_I}}(x)$ are local martingales that are uniformly bounded above and so are martingales. The standard application of Fubini's theorem then shows that
\begin{align*}
X_{t \wedge \ep{T_I}}(I) = \int_I M_{t \wedge \ep{T_I}}(x) \, dx
\end{align*}
is also a martingale. Continuity of $X_{t \wedge \ep{T_I}}$ follows again by the martingale representation theorem.
\end{proof}

\begin{remark}
Note that $\tau_I$ is very closely related to the first time that $Y_t(I) = 0$. In fact, on the event $\{ \tau_I < \infty \}$ they are the same, but on the event that the curve doesn't hit $\overline{I}$ the first time that $Y_t(I) = 0$ is the swallowing time $T_I$ of $I$, which is finite, while $\tau_I = \infty$.
\end{remark}

\begin{proposition}\label{contNotInI}
Fix an interval $I \subset \Rplus$. Then
\begin{align*}
\prob{X_t(I) \textrm{ is continuous on } \left \{ t \geq 0 : \gamma(t) \not \in \overline{I} \right \}} = 1.
\end{align*}
\end{proposition}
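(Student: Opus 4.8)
The plan is to deduce continuity of $X_t(I)$ on the set $\{t \geq 0 : \gamma(t) \notin \overline{I}\}$ from the machinery already assembled for the downcrossing/upcrossing stopping times $\ep{\tau_n}$ together with Corollary \ref{contToFirstHitting}. The key observation is that the set $\{t : \gamma(t) \notin \overline{I}\}$ is an open subset of $[0,\infty)$, and at any point of this set the curve is at positive distance from $\overline{I}$; hence for $\epsilon$ small enough that point falls inside a downcrossing interval $(\ep{\tau_{2n}}, \ep{\tau_{2n+1}})$ of the process $Y_t(I)$ (using that $\ep{\tau_I}$, the first time the curve comes within $4\epsilon$ of $I$, increases to $\tau_I$, and more generally that at a time when $\gamma(t) \notin \overline I$ we have $Y_t(I)$ bounded away from $0$). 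On such a downcrossing interval $X_t(I)$ agrees with the continuous martingale $\ep{Z_n}(t) = X_{t \wedge \ep{\tau_{2n+1}}}(I) - X_{t \wedge \ep{\tau_{2n}}}(I)$ up to the constant $X_{\ep{\tau_{2n}}}(I)$, so $X_t(I)$ is continuous there.

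First I would reduce to a local statement: it suffices to show that for every rational $t_0$ with $\gamma(t_0) \notin \overline{I}$, the process $X_t(I)$ is continuous in a neighborhood of $t_0$, since $\{t : \gamma(t) \notin \overline I\}$ is open and a countable union of such neighborhoods covers it; continuity on each neighborhood then gives continuity on the whole set. (One must be slightly careful: continuity \emph{at} $t_0$ for all rational $t_0$ in the set is not quite continuity \emph{on} the set, but covering the open set by countably many open intervals on each of which $X_t(I)$ is continuous does suffice.) Second, fix such a $t_0$; since $\gamma(t_0) \notin \overline{I}$ and $\gamma$ is continuous, there is $\delta > 0$ and $\rho > 0$ with $\dist(\gamma[t_0 - \delta, t_0 + \delta] \cap \gamma[0, t_0+\delta], I) > \rho$ — more simply, the open interval $J = (t_0 - \delta, t_0 + \delta)$ lies in $\{t : \gamma(t) \notin \overline I\}$. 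Third, I would argue that on $J$ the process $Y_t(I)$ stays strictly positive (the times $t < T_I$ with $Y_t(I) = 0$ are exactly the times $\gamma(t) \in \overline I$, as noted just before the definition of $\ep{\tau_n}$, and for $t \geq T_I$ one has $\gamma(t) \notin \overline I$ forces... actually one should observe $J$ cannot contain $T_I$ unless $I$ is swallowed without being hit, a case handled the same way), so $\inf_{t \in \overline{J'}} Y_t(I) = c > 0$ for any compact $J' \subset J$. Fourth, choose $\epsilon < c/2$; then no $\ep{\tau_{2n+1}}$ lies in $J'$ (since $Y_t(I) > 2\epsilon$ throughout $J'$), so $J'$ is contained in a single interval $[\ep{\tau_{2n}}, \ep{\tau_{2n+1}})$ (or in $[\ep{\tau_{2N+1}}, \infty)$ past the last crossing, where $X_t(I) = 0$ for $t \geq T_I$ and we are in the unhit case). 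On this interval, by Lemma \ref{contAway}, $X_t(I) = \ep{Z_n}(t) + X_{\ep{\tau_{2n}}}(I)$ is continuous, so $X_t(I)$ is continuous on $J'$; letting $J'$ exhaust $J$ gives continuity on $J$.

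The main obstacle I anticipate is the bookkeeping around the boundary cases: the time $t_0$ could be $\geq T_I$ (the interval swallowed but its closure never hit by the curve), in which case one is looking at times past all the $\ep{\tau_n}$, and one must invoke that $X_t(I) = 0$ for $t \geq T_I$ (from Corollary \ref{zeroLebesgue}, as recorded in the text) to get continuity there; and one must make sure the two descriptions (via Corollary \ref{contToFirstHitting} before $\tau_I$, via the downcrossing martingales otherwise) are stitched together without gaps. A second, more technical point is confirming that $\{t : \gamma(t) \notin \overline I\}$ is genuinely open in $[0,\infty)$, which is immediate from continuity of $\gamma$ and closedness of $\overline I$, and that the countable covering argument legitimately upgrades local continuity to continuity on the set — this is routine but worth stating. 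Everything else is a direct appeal to Lemma \ref{contAway}, Lemma \ref{tauToInfty}, and the definition of the $\ep{\tau_n}$.
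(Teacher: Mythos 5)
Your proof is correct and takes essentially the same approach as the paper: both rely on Lemma \ref{contAway} and the downcrossing intervals $(\ep{\tau_{2n}},\ep{\tau_{2n+1}})$, the only difference being that the paper asserts directly that $\bigcup_{\epsilon\in\Qplus}\bigcup_{n}(\ep{\tau_{2n}},\ep{\tau_{2n+1}})$ equals $\{t:\gamma(t)\notin\overline I\}$ while you establish the same covering pointwise around each $t_0$. Your remark about reducing from $I$ to $\overline I$ is handled in the paper by noting that $X_t(I)$ and $X_t(\overline I)$ are indistinguishable, which you implicitly use and could state once for cleanliness.
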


\begin{proof}
First suppose that $I$ is closed. Then $\gamma(t) \in I$ if and only if $Y_t(I) = 0$. Moreover, since $\gamma$ is almost surely continuous and $I$ is closed it follows that
\begin{align*}
\left \{ t \geq 0 : \gamma(t) \not \in I \right \}
\end{align*}
is almost surely open. From these two facts it follows that
\begin{align*}
\bigcup_{\substack{\epsilon > 0 \\ \epsilon \in \mathbb{Q}}} \bigcup_{n \geq 0} \left( \ep{\tau_{2n}}, \ep{\tau_{2n+1}} \right) = \left \{ t \geq 0 : \gamma(t) \not \in I \right \}.
\end{align*}
But by Lemma \ref{contAway} we have
\begin{align*}
\prob{X_t(I) \textrm{ is continuous on } \bigcup_{\substack{\epsilon > 0 \\ \epsilon \in \mathbb{Q}}} \bigcup_{n \geq 0} \left( \ep{\tau_{2n}}, \ep{\tau_{2n+1}} \right) } = 1.
\end{align*}

If $I$ is not closed then the statement is true for $X_t(\overline{I})$. But the processes $X_t(I)$ and $X_t(\overline{I})$ are indistinguishable, i.e.
\begin{align*}
\prob{X_t(I) = X_t(\overline{I}) \textrm{ for all } t \geq 0} = 1,
\end{align*}
since $\overline{I} \backslash I$ consists of at most the two endpoints of $I$.
\end{proof}

\begin{proposition}\label{XContinuous}
For any interval $I \subset \Rplus$, the process $X_t(I)$ is almost surely continuous with $X_t(I) = 0$ for $t \geq T_I$.
\end{proposition}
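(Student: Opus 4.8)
The plan is to combine the two substantive facts already established: Proposition \ref{contNotInI}, which yields continuity of $X_t(I)$ at every time $t_0$ with $\gamma(t_0)\notin\overline{I}$, and Corollary \ref{SmallMass}, which says that almost surely, and uniformly in $t$, the mass that $X_t$ places on a dyadic subinterval of $I$ of length $|I|2^{-n}$ is at most $2^{-n\alpha}$ once $n$ is large (with $\alpha\in(0,d/2)$ fixed). The claim $X_t(I)=0$ for $t\ge T_I$ was already noted (it follows from Corollary \ref{zeroLebesgue}), so only continuity remains, and by Proposition \ref{contNotInI} it suffices to treat the times $t_0$ at which $\gamma(t_0)\in\overline{I}$.

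For such a $t_0$, set $y_0=\gamma(t_0)\in\overline{I}$ and split $I$ at a small dyadic scale around $y_0$. For large $n$ let $J_n=\{x\in I:\,|x-y_0|\le |I|2^{-n}\}$; this is an interval containing $y_0$ that meets at most three of the dyadic blocks, so for $n$ past the random threshold furnished by Corollary \ref{SmallMass} one has $\sup_{t\ge0}X_t(J_n)\le 3\cdot2^{-n\alpha}$. The complement $I\setminus J_n$ is a union of at most two intervals, and $\overline{I\setminus J_n}$ is disjoint from a neighborhood of $y_0$; hence, by continuity of $\gamma$, the tip stays out of $\overline{I\setminus J_n}$ for all $t$ in a neighborhood of $t_0$, so Proposition \ref{contNotInI} makes $t\mapsto X_t(I\setminus J_n)$ continuous at $t_0$. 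Since $X_t(I)=X_t(I\setminus J_n)+X_t(J_n)$ with $0\le X_t(J_n)\le 3\cdot2^{-n\alpha}$ for every $t$, it follows that $\limsup_{t\to t_0}|X_t(I)-X_{t_0}(I)|\le 6\cdot2^{-n\alpha}$; letting $n\to\infty$ gives continuity at $t_0$, and since $t_0$ is arbitrary we are done.

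The only care needed is bookkeeping around the fact that the splitting point $y_0=\gamma(t_0)$ is random: I would fix at the outset a single full-probability event on which $\gamma$ is continuous, Corollary \ref{SmallMass} holds for $I$, and Proposition \ref{contNotInI} holds simultaneously for the countable family of closed subintervals of $I$ whose endpoints are dyadic points or endpoints of $I$ — these are precisely the intervals $I\setminus J_n$ that can occur — and then run the argument pointwise in $\omega$. The genuinely hard analytic input lies entirely in Proposition \ref{contNotInI} and Corollary \ref{SmallMass} (which in turn rest on the Schramm--Zhou two-point bound and the martingale representation theorem); given those, this proposition is a soft combination and I anticipate no serious obstacle. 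An alternative route would apply the strong Markov property across the crossing times $\tau_n^{\epsilon}$ of $Y_t(I)$, invoking Corollary \ref{contToFirstHitting} on each excursion, but the scale-splitting argument seems cleaner and avoids tracking the excursion structure.
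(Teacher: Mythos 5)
Your argument is correct and rests on the same two pillars as the paper's---Proposition \ref{contNotInI} for times when the tip is away from the test set, and Corollary \ref{SmallMass} to squeeze the contribution near the tip---but the decomposition you use is different and arguably cleaner. The paper fixes a dyadic partition of $I$ into $2^n$ blocks, classifies each block into four types according to its relation to the hull at time $T$ (swallowed strictly before, untouched, containing $\gamma(T)$, or containing the last previously-hit point), uses Corollary \ref{contToFirstHitting} for the untouched blocks and Corollary \ref{SmallMass} for the at-most-four ``bad'' blocks of the last two types, and argues by contradiction. You instead fix the time $t_0$, localize around $y_0=\gamma(t_0)$ at scale $n$, and bound the oscillation of $X_\cdot(I)$ at $t_0$ by $O(2^{-n\alpha})$ uniformly in $n$; this sidesteps the type classification entirely, does not invoke Corollary \ref{contToFirstHitting}, and delivers continuity directly rather than by contradiction.

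Two small points of care. First, your bookkeeping remark about the countable family is slightly off as written: the interval $J_n=\{x\in I:|x-y_0|\le|I|2^{-n}\}$ has endpoints $y_0\pm|I|2^{-n}$, which are not dyadic when $y_0=\gamma(t_0)$ is random. The fix is easy and you essentially already have it: take $J_n$ instead to be the union of the (at most three) dyadic blocks at scale $n$ that meet the interval around $y_0$; then $J_n$ and $I\setminus J_n$ have dyadic (or end-of-$I$) endpoints, the family is countable, and the bound $\sup_t X_t(J_n)\le 3\cdot2^{-n\alpha}$ from Corollary \ref{SmallMass} still holds. Second, note that your argument leans somewhat more heavily on the full strength of Proposition \ref{contNotInI} than the paper's does: at a time $t_0$ with $\gamma(t_0)=y_0\in\overline I$ it can happen that the complementary piece $I\setminus J_n$ is fully swallowed at $t_0$ (the hull jumps across it) while $\gamma(t_0)\notin\overline{I\setminus J_n}$, and you are using Proposition \ref{contNotInI} to conclude that $X_t(I\setminus J_n)$ is continuous there. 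The paper instead routes these ``swallowed but not hit'' subintervals through Corollary \ref{contToFirstHitting}, which handles that situation explicitly via the stopped martingale $X_{t\wedge\tau_{I'}}$. Both routes land in the same place given the statement of Proposition \ref{contNotInI}, but it is worth being aware that you are implicitly invoking that proposition at the swallowing time of $I\setminus J_n$ rather than only at interior points of $\{t:\gamma(t)\notin\overline{I\setminus J_n}\}$.
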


\begin{proof}
By Proposition \ref{contNotInI}, any discontinuity of $I$ could only occur at a time $t$ for which for which $\gamma(t) \in I$. We will show that Corollary \ref{SmallMass} forbids having a discontinuity at such times.

Suppose that $\gamma(T) \in I$. Let $\{ I_{k,n} \}_{1 \leq k \leq 2^n}$ be a partition of the interval $I$ into $2^n$ subintervals of length $|I| 2^{-n}$ (they may overlap at the endpoints). We divide the subintervals into four different types:
\begin{enumerate}
\renewcommand{\labelenumi}{(\roman{enumi})}
\item subintervals that have been completely swallowed \textit{strictly before} time $T$,
\item subintervals that have not been hit or swallowed before time $T$,
\item subintervals containing $\gamma(T)$,
\item subintervals containing $I \cap K_{T-}$.
\end{enumerate}
The $X_t$ process for subintervals of type (i) is identically zero after the swallowing time; hence $X_t(I_{k,n})$ is continuous at time $T$ if $I_{k,n}$ is of type (i). Corollary \ref{contToFirstHitting} implies that $X_t(I_{k,n})$ is continuous at time $T$ for subintervals of type (ii). Writing
\begin{align*}
X_t(I) = \sum_{k=1}^{2^n} X_t(I_{k,n}),
\end{align*}
it follows that a discontinuity of $X_t(I)$ at time $T$ can only be caused by intervals of type (iii) or (iv). There are at most two intervals of type (iii) (there is usually only one, there are two only if $\gamma(T)$ lies on a shared endpoint of different $I_{k,n}$). The type (iv) intervals are those containing the last point of $I$ that $\gamma$ hits before time $T$; as such there are at most two subintervals of type (iv). If $X_t(I)$ has a discontinuity of size $\delta$ at time $T$ then one of the four subintervals of type (iii) or (iv) must have a discontinuity of size $\delta/4$ at time $T$. Thus for all $n \geq 0$ there exists an integer $k_n$ such that
\begin{align*}
\sup_{|t-T| \leq \epsilon} X_t(I_{k_n,n}) - \inf_{|t-T| \leq \epsilon} X_t(I_{k_n,n}) \geq \delta/4
\end{align*}
for all $\epsilon > 0$. The infimum being non-negative implies that
\begin{align*}
\sup_{t > 0} X_t(I_{k_n,n}) \geq \delta/4.
\end{align*}
for all $n \geq 0$. But Corollary \ref{SmallMass} says that the latter event has probability zero, from which the result follows.
\end{proof}

\subsection{The Doob-Meyer Decomposition for $X_t(I)$}

For an interval $I = (x_1, x_2]$ with $0 < x_1 < x_2 < \infty$, we prove in this section that $X_t(I)$ has a Doob-Meyer decomposition as a martingale minus a predictable, non-decreasing process. The strategy is to use the proofs of Section \ref{XtMeasure} to verify that $X_t(I)$ satisfies the hypotheses of Theorem \ref{DMTheorem}.

\begin{proposition}\label{DMDecomp}
The process $X_t(I)$ can be \textit{uniquely} decomposed as
\begin{align*}
X_t(I) = N_t(I) - A_t(I),
\end{align*}
where $N_t(I)$ is a continuous martingale and $A_t(I)$ is a \textbf{continuous}, non-decreasing process with $A_0(I) = 0$ and $A_{T_I}(I)$ integrable. In fact, $N_t(I) = \condexpect{A_{T_I}(I)}{\F_t}$. Moreover, both $N_t(I)$ and $A_t(I)$ are constant for $t \geq T_I$.
\end{proposition}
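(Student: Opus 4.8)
The plan is to verify the hypotheses of the Doob-Meyer decomposition theorem (Theorem \ref{DMTheorem}) for the supermartingale $X_t(I)$ on the interval $[0, T_I]$, with lifetime $\zeta = T_I$, and then read off the stated conclusions. We already know from Section \ref{XtMeasure} that $X_t(I)$ is a supermartingale with $X_t(I) = 0$ for $t \geq T_I$ (so $X_{\zeta}(I) = 0$ almost surely), and from Proposition \ref{XContinuous} that $X_t(I)$ is almost surely continuous. The two things that remain to be checked are that $X_t(I)$ is of \emph{class $\mathcal{D}$} and that it is \emph{regular}; once both hold, Theorem \ref{DMTheorem} yields a unique decomposition $X_t(I) = N_t(I) - A_t(I)$ with $N_t(I) = \condexpect{A_{T_I}(I) + X_{T_I}(I)}{\F_t} = \condexpect{A_{T_I}(I)}{\F_t}$, and the ``regular'' hypothesis upgrades $A_t(I)$ to be continuous. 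Continuity of $X_t(I)$ itself then forces $N_t(I)$ to be continuous as well (since $N_t = X_t + A_t$, a sum of continuous processes). Finally, $A_{T_I}(I)$ is integrable by the theorem, and $N_t(I) = A_{T_I}(I)$ for $t \geq T_I$ since $A$ is non-decreasing and $A_t$ is constant on $[T_I, \infty)$ (nothing is swallowed after $T_I$), and correspondingly $X_t(I) = 0$ there; hence both $N$ and $A$ are constant for $t \geq T_I$.

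For the \emph{class $\mathcal{D}$} property I would use the second-moment bound of Proposition \ref{IntegralBound}: for every almost surely finite stopping time $\tau \leq T_I$ we have $\expect{X_{\tau}(I)^2} \leq c_* x_1^{-\beta} |I|^{1+d}$, a bound that is uniform over $\tau$. A family of random variables whose second moments are uniformly bounded is bounded in $L^2$, hence uniformly integrable; this immediately gives that $\{X_{\tau}(I) : \tau \leq T_I \text{ a.s. finite stopping time}\}$ is uniformly integrable, which is exactly the \emph{class $\mathcal{D}$} condition. This is the step where the Schramm-Zhou two-point martingale does the real work, but since Proposition \ref{IntegralBound} is already available it is short.

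For \emph{regularity} I would take a non-decreasing sequence of stopping times $\tau_n \uparrow \tau$ with $\tau_n \leq l$ for some deterministic $l$, and show $\expect{X_{\tau_n}(I)} \to \expect{X_{\tau}(I)}$. Because $X_t(I)$ is almost surely continuous (Proposition \ref{XContinuous}), $X_{\tau_n}(I) \to X_{\tau}(I)$ almost surely. To pass the limit under the expectation I would invoke uniform integrability of $\{X_{\tau_n}(I)\}_n$, which is a subfamily of the \emph{class $\mathcal{D}$} family already shown to be uniformly integrable; convergence almost surely together with uniform integrability gives $L^1$ convergence, hence convergence of expectations. (Alternatively one can bound $\sup_n X_{\tau_n}(I) \leq \sup_{t\geq 0} X_t(I)$ and use Corollary \ref{SmallMass} / the $L^2$ maximal bound to get a dominating integrable random variable.) This establishes regularity with essentially the same input as \emph{class $\mathcal{D}$}.

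I expect the main conceptual point — rather than a technical obstacle — to be the bookkeeping about the lifetime: one must decompose $X$ on the stochastic interval $[0, T_I]$, check that $X_{T_I}(I) = 0$ so the ``$\condexpect{Z_\zeta}{\F_t}$'' term drops out, and then extend $N_t(I)$ and $A_t(I)$ to all $t \geq 0$ as constants past $T_I$, verifying this extension is consistent with $X_t(I) = 0$ for $t \geq T_I$. The uniqueness assertion is inherited verbatim from the uniqueness clause of Theorem \ref{DMTheorem}. No genuinely hard estimate is needed here; all the analytic difficulty has been front-loaded into Propositions \ref{IntegralBound} and \ref{XContinuous}.
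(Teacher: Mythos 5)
Your proposal is correct and follows essentially the same route as the paper: class $\mathcal{D}$ via the uniform second-moment bound of Proposition \ref{IntegralBound}, continuity of the decomposition from continuity of $X_t(I)$ (Proposition \ref{XContinuous}), and $X_{T_I}(I)=0$ to kill the terminal term. The paper actually skips the explicit regularity argument and instead directly invokes the last clause of Theorem \ref{DMTheorem} (``if $Z$ is continuous then $M$ and $A$ are both continuous''), which is a touch cleaner than your route through regularity, though both are valid. One small point: for constancy past $T_I$, your parenthetical ``nothing is swallowed after $T_I$'' appeals to the interpretation of $A_t(I)$ as $\mum{I \cap K_t}$, which is not yet established at this stage --- the measure is only defined afterwards. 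The paper gives the cleaner purely process-theoretic argument: $X_t(I) = 0$ for $t \geq T_I$ forces $N_t(I) = A_t(I)$ on $[T_I,\infty)$, and a martingale that coincides with a non-decreasing process must be constant; you should replace your parenthetical with that observation (or explicitly frame the decomposition on $[0,T_I]$ and extend by constants, checking consistency with $X_t(I)=0$).
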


\begin{proof}
By Theorem \ref{DMTheorem}, it is enough for the existence part of the decomposition to show that $X_t(I)$ is of \textit{class $\mathcal{D}$}. This property is immediate from Proposition \ref{IntegralBound}, since if $\tau$ is a stopping time with $\prob{\tau < \infty} = 1$ then
\begin{align*}
\expect{X_{\tau}(I)^2} < \infty,
\end{align*}
and the bound is independent of $\tau$. The continuity of $A_t(I)$ follows from the continuity of $X_t(I)$ in Proposition \ref{XContinuous}. In fact the continuity of $X_t(I)$ implies that both parts $N_t(I)$ and $A_t(I)$ are continuous.

To show that $N_t(I)$ and $A_t(I)$ are constant for $t \geq T_I$ it is enough to observe that $X_t(I) = 0$ for $t \geq T_I$. The martingale $N_t(I)$ can be equal to the non-decreasing process $A_t(I)$ only if the two processes are the same constant.
\end{proof}

\subsection{Definition of the Boundary Measure \label{defnSection}}

We now have all the tools we need to properly define the boundary measure. The basic construction is to take the terminal values $A_{T_I}(I)$ of the non-decreasing processes and encode them all into a single measure. We conclude the section with an alternative but useful characterization of the boundary measure.

\begin{definition}[Definition of the Boundary Measure]
Define the collection of intervals
\begin{align*}
\mathcal{Q} := \Bigl \{ I = (x_1, x_2] : 0 < x_1 < x_2 < \infty, x_1, x_2 \in \mathbb{Q} \Bigr \}.
\end{align*}
For each $I \in \mathcal{Q}$, define
\begin{align*}
\mu(I) := A_{T_I}(I).
\end{align*}
It is easy to see that $\mu$ is almost surely countably additive on the field generated by $\mathcal{Q}$ and that this field generates $\borelplus$, so by the Carath\'{e}odory Extension Theorem $\mu$ can be uniquely extended to a Borel measure on $\Rplus$.
\end{definition}

\begin{proposition}\label{muRestricted}
For all $t \geq 0$, the random measure $\mum{\cdot \cap K_t}$ is $\Ft$-measurable.
\end{proposition}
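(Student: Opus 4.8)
The plan is to reduce the measurability of $\mum{\cdot \cap K_t}$ to the measurability of $\mu(I \cap K_t)$ for intervals $I \in \mathcal{Q}$, and then to express the latter in terms of the non-decreasing processes $A_s(\cdot)$ evaluated at time $t$ (not at the swallowing time). The key observation is that for a fixed rational interval $I = (x_1, x_2]$, the portion of $I$ that has been swallowed by time $t$ is again an interval with left endpoint $x_1$ (by Proposition \ref{BesselProp}(e), the swallowing times $T_x$ are non-decreasing in $x$), so $I \cap K_t = (x_1, R_t]$ where $R_t := \sup\{ x \in I : T_x \leq t \}$ (with the convention that this is $x_1$ if the set is empty). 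Since $R_t$ is $\Ft$-measurable (each event $\{T_x \le t\}$ is, and a sup over a countable dense set of such $x$ suffices by right-continuity of $t\mapsto K_t\cap\R$), and since $X_t(I) = 0$ for $t \ge T_I$ implies $A_{T_x}((x_1,x]) = A_t((x_1,x])$ for every $x$ with $T_x \le t$, I expect
\begin{align*}
\mu\bigl( I \cap K_t \bigr) = A_t\bigl( (x_1, R_t] \bigr) = \lim_{\substack{x \downarrow R_t \\ x \in \mathbb{Q}}} A_t\bigl( (x_1, x] \bigr),
\end{align*}
using the continuity of $X_t$ in the right endpoint (Corollary \ref{XtHolder}) together with the additivity $A_t((x_1,x]) + A_t((x,y]) = A_t((x_1,y])$ and the fact that $A_t((x,y]) \to 0$ as $x,y \downarrow R_t$ (since $X_t$ is continuous in the right endpoint and vanishes on the swallowed part). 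Each $A_t((x_1,x])$ is $\Ft$-measurable because $A_\cdot(J)$ is predictable hence adapted for every $J \in \mathcal{Q}$ (Proposition \ref{DMDecomp}), so the displayed limit exhibits $\mu(I \cap K_t)$ as a countable limit of $\Ft$-measurable random variables.

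Having established $\Ft$-measurability of $\mu(I \cap K_t)$ for all $I \in \mathcal{Q}$, I would promote this to the measurability of the whole measure $\mum{\cdot \cap K_t}$ as follows. The map $\omega \mapsto \mum{\cdot \cap K_t}(\omega)$ takes values in the space $\mathcal{M}$ of positive Borel measures on $\Rplus$, and by the definition of random measure in Section \ref{NotationSect} it suffices to check that $\omega \mapsto \mu(A \cap K_t)(\omega)$ is $\Ft$-measurable for a generating field of sets $A$. The field generated by $\mathcal{Q}$ is such a field, and on it measurability has just been verified; for a general element $A$ of this field, write it as a finite disjoint union of half-open rational intervals and use additivity of the measure $\mum{\cdot \cap K_t}$. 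Since $\mum{\cdot \cap K_t}$ is almost surely a genuine countably additive measure (it is the restriction of $\mu$ to the Borel set $K_t \cap \Rplus$), the Carathéodory extension argument invoked in the definition of $\mu$ shows that measurability on the generating field propagates to measurability of $\mu(A \cap K_t)$ for every $A \in \borelplus$, which is precisely the assertion.

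The main obstacle I anticipate is the first step: cleanly identifying $\mu(I \cap K_t)$ with $A_t((x_1,R_t])$ rather than with $A_{T_x}((x_1,x])$ summed over swallowed $x$, i.e. justifying that one may evaluate the non-decreasing process at the \emph{current} time $t$ instead of at the (later, $x$-dependent) swallowing times. This rests on two facts already available: that $X_s(J) = 0$ for $s \ge T_J$ and $A_s(J)$ is constant for $s \ge T_J$ (Proposition \ref{DMDecomp}), so $A_{T_J}(J) = A_t(J)$ whenever $T_J \le t$; and that $A_t$ is continuous and additive in the interval argument, so that the total mass $A_t(I \cap K_t)$ of the already-swallowed sub-interval is a well-defined limit of $A_t$ of rational sub-intervals. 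One should also handle the boundary case where $\gamma(t)$ sits exactly at the frontier of $I \cap K_t$; here the continuity of $X_t$ in the right endpoint guarantees no mass is lost or double-counted. Everything else is routine bookkeeping with the Carathéodory extension and countable operations preserving $\Ft$-measurability.
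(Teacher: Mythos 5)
Your proof is correct and follows essentially the same route as the paper: reduce to intervals, identify $\mu(I\cap K_t)$ with a value of the non-decreasing process $A_t$, and use that $A_\cdot(J)$ is predictable, hence adapted. You are more careful than the paper's published proof about the intermediate case where $I\cap K_t$ is a proper, random subinterval of $I$ (the paper only treats $I\cap K_t=\emptyset$ and $I\subset K_t$ explicitly); your limiting argument via rational right endpoints and the vanishing of $A_t((R_t,x])$ as $x\downarrow R_t$ correctly fills that gap.
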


\begin{proof}
It suffices to prove that the random variable $\mum{I \cap K_t}$ is $\Ft$-measurable for each interval $I \subset \Rplus$. The set $I \cap K_t = \{ x \in I : T_x \leq t \}$ is $\Ft$-measurable since the $T_x$ are all stopping times. If $I \cap K_t = \emptyset$ then $\mum{I \cap K_t} = 0$. If $I \subset K_t$ then $t \geq T_I$, and by definition $\mum{I} = A_{T_I}(I)$.
\end{proof}

Hence for intervals $I \subset \Rplus$, $\mum{I \cap K_t}$ is a non-decreasing, adapted process such that
\begin{align*}
\mum{I \cap K_t} + \int_I M_t(x) \, dx
\end{align*}
is a martingale with respect to $P$ and $\Ft$. Uniqueness of the Doob-Meyer decomposition tells us that $\mum{I \cap K_t}$ is the unique (up to indistinguishability) such process that can be added to $\int_I M_t(x) \, dx$ to get a martingale. As in the introduction we therefore have the decomposition
\begin{align*}
\condexpect{\mu(I)}{\Ft} = \mum{I \cap K_t} + \int_I M_t(x) \, dx.
\end{align*}
The next theorem gives a similar but more precise characterization of the random measure $\mu$.

\begin{theorem}\label{decompTheorem}
Let $(\Omega, \F, P)$ be a probability space with a $P$-Brownian motion $\{B_t, \Ft; t \geq 0 \}$ on it. If the filtration $\Ft$ satisfies the \textit{usual conditions}, then there exists a unique random measure $\mu$ (unique up to an event of $P$-measure zero) such that
\begin{enumerate}
\roman{enumi}
\item $\mum{\cdot \cap K_t}$ is a predictable process,
\item $\mum{I \cap K_t} + \int_I M_t(x) \, dx$ is, for every interval $I \subset \Rplus$, a martingale with respect to $P$ and $\Ft$.
\end{enumerate}
\end{theorem}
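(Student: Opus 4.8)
The plan is to establish existence and uniqueness separately, with existence handled by the construction already carried out in Section \ref{defnSection} and uniqueness following from the uniqueness clause of the Doob-Meyer decomposition applied simultaneously to all rational intervals.

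\textbf{Existence.} First I would verify that the measure $\mu$ defined in Section \ref{defnSection} satisfies properties (i) and (ii). For (ii): fix an interval $I \subset \Rplus$. It suffices to treat $I \in \mathcal{Q}$, since both $\int_I M_t(x)\,dx$ and $\mum{I \cap K_t}$ depend on $I$ only through countably-additive operations that pass to the extension, and a general interval can be exhausted by rational ones with the monotone/dominated convergence arguments already used (using the finite-expectation bound from Proposition \ref{IntegralBound}). For $I \in \mathcal{Q}$, Proposition \ref{DMDecomp} gives $X_t(I) = N_t(I) - A_t(I)$ with $N$ a continuous martingale, $A$ continuous non-decreasing, and $A_t(I) = A_{T_I}(I) = \mu(I)$ for $t \geq T_I$. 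Then I must check that $\mum{I \cap K_t} = A_t(I)$ for all $t$, not just at the swallowing time. This is where I would argue that $A_t(I)$ can only increase when $\gamma(t) \in \overline{I}$ (by Proposition \ref{contNotInI}, $X_t(I)$, hence $A_t(I)$, is flat off the closed set $\{t : \gamma(t) \in \overline I\}$), and that at such times the mass being added is exactly the $\mu$-mass of the newly swallowed sub-interval; combined with countable additivity of $\mu$ and the fact that $\mum{I_j} = A_{T_{I_j}}(I_j)$ for each dyadic sub-interval $I_j$ of $I$, a telescoping/partition argument identifies $\mum{I \cap K_t}$ with $A_t(I)$. Given this identity, $\mum{I \cap K_t} + X_t(I) = N_t(I)$ is a continuous martingale, giving (ii). For (i): $A_t(I)$ is predictable by the Doob-Meyer theorem (Theorem \ref{DMTheorem}), and since it is continuous it is in particular predictable; predictability of the set function $\mum{\cdot \cap K_t}$ then follows because it is built from the countable family $\{A_t(I) : I \in \mathcal{Q}\}$ of predictable processes via the same Carath\'eodory extension, and the monotone-class/limit operations preserve predictability. (Alternatively, one notes $\mum{\cdot \cap K_t}$ is continuous and adapted.)

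\textbf{Uniqueness.} Suppose $\mu'$ is another random measure satisfying (i) and (ii). For each fixed $I \in \mathcal{Q}$, property (ii) says $\mu'(I \cap K_t) + X_t(I)$ is a martingale, i.e. $X_t(I) = \bigl(\mu'(I \cap K_t) + X_t(I)\bigr) - \mu'(I \cap K_t)$ exhibits $X_t(I)$ as a martingale minus the process $\mu'(I \cap K_t)$, which by (i) is predictable, and is non-decreasing in $t$ (because $K_t$ is non-decreasing in $t$ and $\mu'$ is a positive measure), starts at $0$, and equals $\mu'(I)$ for $t \geq T_I$ hence has integrable terminal value by the class $\mathcal{D}$ bound. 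By the uniqueness clause of Theorem \ref{DMTheorem}, $\mu'(I \cap K_t)$ is indistinguishable from $A_t(I) = \mum{I \cap K_t}$; in particular, evaluating at $t \geq T_I$, $\mu'(I) = \mu(I)$ almost surely. Taking the intersection over the countable collection $\mathcal{Q}$, almost surely $\mu'(I) = \mu(I)$ for all $I \in \mathcal{Q}$ simultaneously, and since $\mathcal{Q}$ generates $\borelplus$ and both are (a.s.) Borel measures that are finite on bounded sets, they agree as measures. Hence $\mu' = \mu$ up to a $P$-null event.

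\textbf{Main obstacle.} The delicate point is the identification $\mum{I \cap K_t} = A_t(I)$ for all $t$ (not merely the already-established $\mum{I} = A_{T_I}(I)$ at the swallowing time). One must argue that the increasing process $A_t(I)$, restricted to any sub-interval, consistently equals the $\mu$-mass of the swallowed part --- this requires knowing that $A_t(I)$ increases \emph{only} while $\gamma(t)$ touches $\overline I$ (available from Proposition \ref{contNotInI}) and then a compatibility/consistency argument across a refining sequence of partitions of $I$ into rational sub-intervals, using continuity of each $A_t(\cdot)$ and countable additivity of $\mu$ to pass to the limit. Everything else --- the class $\mathcal{D}$ property, continuity, integrability of terminal values --- has already been established in Sections \ref{XtMeasure} and the Doob-Meyer subsection, so this compatibility check is the real content of the theorem.
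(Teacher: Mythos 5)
Your approach matches the paper's, which in fact offers no formal proof of this theorem: it leans on the construction from Section~\ref{defnSection} together with the uniqueness clause of Theorem~\ref{DMTheorem}, and the key identity is asserted in the paragraph preceding the theorem rather than argued. Your uniqueness argument is clean and complete: given $\mu'$ satisfying (i)--(ii), the process $\mu'(I \cap K_t)$ is predictable, non-decreasing, starts at zero, and has integrable terminal value (by non-negativity of the martingale $\mu'(I\cap K_t) + X_t(I)$ and Fatou), so Doob--Meyer uniqueness forces $\mu'(I\cap K_t) = A_t(I)$ up to indistinguishability, and intersecting over the countable class $\mathcal{Q}$ finishes it.

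You are also right that the real content on the existence side is the identity $\mu(I \cap K_t) = A_t(I)$ for \emph{all} $t$, not only $t\ge T_I$; the paper glosses over this. Your telescoping sketch is the right idea, but as written it does not close immediately. Writing $J_k = (I\cap K_{t_k})\setminus K_{t_{k-1}}$ for a time-partition $0=t_0<\cdots<t_n=t$, the desired increment identity $\mu(J_k) = A_{t_k}(I) - A_{t_{k-1}}(I)$ needs $A_{t_{k-1}}(J_k) = 0$, i.e.\ that $\gamma$ has not touched $\overline{J_k}$ before time $t_{k-1}$. But the left endpoint of $\overline{J_k}$ is the rightmost swallowed point at time $t_{k-1}$, which the curve \emph{has} already touched, so Corollary~\ref{contToFirstHitting} does not apply directly to $J_k$. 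One must show this single boundary point contributes no mass --- for instance by running the argument on slightly-shrunken open sub-intervals and using continuity of $A_t(\cdot)$ as a function of the interval (Corollary~\ref{XtHolder}), or by establishing atom-freeness first. You correctly identify this as the obstacle; just be aware that the naive telescoping leaves that residue to be cleared.
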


Note that the random measure $\mu$ is implicitly a function of the Brownian motion $B_t$, its filtration $\Ft$, and the measure $P$.

\begin{remark}
To avoid having to constantly say ``for every interval $I \subset \Rplus$'', we will use the infinitesimal shorthand to state part two of the theorem as
\begin{align*}
\indicate{x \in K_t} \, d\mu(x) + M_t(x) \, dx
\end{align*}
is a martingale with respect to $P$ and $\Ft$.
\end{remark}

\begin{corollary}\label{decompLemma}
For all bounded, measurable functions $f : \Rplus \to \R$
\begin{align*}
\indicate{x \in K_t} f(x) \, d\mu(x) + f(x) M_t(x) \, dx
\end{align*}
is a martingale with respect to $P$ and $\Ft$. Moreover, if $f$ is non-negative then $f(x) d\mu(x)$ is the unique random measure whose restriction to $K_t$ is predictable and such that
\begin{align*}
\indicate{x \in K_t} f(x) \, d\mu(x) + f(x) M_t(x) \, dx
\end{align*}
is a martingale with respect to $P$ and $\Ft$.
\end{corollary}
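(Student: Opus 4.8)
The plan is to bootstrap from Theorem~\ref{decompTheorem} (equivalently Proposition~\ref{DMDecomp}), which already gives the first assertion for $f=\indicate{J}$ with $J$ an interval, to all bounded Borel $f$ by a monotone class argument, and then to get the uniqueness statement by recognizing $f\cdot\mu$ as the predictable non-decreasing part in a Doob--Meyer decomposition of $\int_I f(x)M_t(x)\,dx$.

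First I would record the case of step functions. For an interval $J$ and a bounded interval $I$, the identity
\[
\int_I \indicate{x\in K_t}\indicate{x\in J}\,d\mu(x)+\int_I \indicate{x\in J}M_t(x)\,dx=\mum{(I\cap J)\cap K_t}+X_t(I\cap J)
\]
exhibits the left side as a martingale, since $I\cap J$ is again an interval. By linearity the same holds for $f=\sum_{j=1}^m c_j\indicate{J_j}$, which is a finite linear combination of such martingales; all of the integrals involved are integrable because $f$ is bounded and, for bounded $I$, both $\mum{I}$ (by the Normalization and Scaling properties) and $X_t(I)$ (by \eqref{martExpect}) have finite expectation, and the constant $f\equiv1$ is exactly Theorem~\ref{decompTheorem}. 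Next, let $\mathcal{H}$ be the collection of bounded Borel $f$ on $\Rplus$ for which $G^f_t(I):=\int_I f(x)\indicate{x\in K_t}\,d\mu(x)+\int_I f(x)M_t(x)\,dx$ is a martingale with respect to $P$ and $\Ft$ for every bounded interval $I$. We have just seen that $\mathcal{H}$ is a vector space containing the constants and the indicators of intervals, which form a $\pi$-system generating $\borelplus$. If $f_n\in\mathcal{H}$ with $0\le f_n\uparrow f$ bounded, then monotone convergence gives $G^{f_n}_u(I)\uparrow G^f_u(I)$ a.s.\ for each $u\ge0$, and since $0\le G^{f_n}_u(I)\le\|f\|_\infty\bigl(\mum{I}+X_u(I)\bigr)$ with the right side integrable, conditional dominated convergence yields $\condexpect{G^f_t(I)}{\F_s}=\lim_n\condexpect{G^{f_n}_t(I)}{\F_s}=\lim_n G^{f_n}_s(I)=G^f_s(I)$ for $s<t$, so $f\in\mathcal{H}$. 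By the functional form of the monotone class theorem, $\mathcal{H}$ contains all bounded Borel functions, which is the first assertion. (An unbounded $I$ is handled the same way by exhausting it with bounded intervals, whenever $\expect{\mum{I}}$ and $\expect{X_t(I)}$ are finite.)

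For the uniqueness statement, suppose $f\ge0$ is bounded and $\nu$ is a random measure with $\nu(\cdot\cap K_t)$ predictable and $\indicate{x\in K_t}\,d\nu(x)+f(x)M_t(x)\,dx$ a martingale with respect to $P$ and $\Ft$. Fix $I=(x_1,x_2]$ with $0<x_1<x_2<\infty$ and set $Z_t:=\int_I f(x)M_t(x)\,dx$. Since $f\ge0$, each $f(x)M_t(x)$ is a non-negative supermartingale, so $Z$ is one by Fubini, and $Z$ is of class $\mathcal{D}$ because $\expect{Z_\tau^2}\le\|f\|_\infty^2\expect{X_\tau(I)^2}\le\|f\|_\infty^2 c_* x_1^{-\beta}|I|^{1+d}$ uniformly over a.s.\ finite $\tau$, by Proposition~\ref{IntegralBound}. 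By hypothesis, $t\mapsto\nu(I\cap K_t)$ is a predictable, non-decreasing, right-continuous process with left limits, vanishing at $0$, with $\nu(I\cap K_{T_I})=\nu(I)$ integrable and $\nu(I\cap K_t)+Z_t$ a martingale. Writing $(f\cdot\mu)(A):=\int_A f\,d\mu$, the process $t\mapsto(f\cdot\mu)(I\cap K_t)$ likewise vanishes at $0$, is non-decreasing with $(f\cdot\mu)(I)\le\|f\|_\infty\mum{I}$ integrable, has $(f\cdot\mu)(I\cap K_t)+Z_t$ a martingale by the first part of the corollary, and is predictable --- approximating $f$ from below by non-negative step functions $f_n\uparrow f$ writes $(f\cdot\mu)(I\cap K_t)$ as an increasing limit of finite linear combinations of the processes $\mum{(I\cap J)\cap K_t}$, which are continuous by Proposition~\ref{DMDecomp}, and predictability survives increasing limits. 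Hence $\nu(I\cap\cdot)$ and $(f\cdot\mu)(I\cap\cdot)$ are both the predictable non-decreasing part in a Doob--Meyer decomposition of the class-$\mathcal{D}$ supermartingale $Z$ on $[0,T_I]$, so they are indistinguishable by the uniqueness in Theorem~\ref{DMTheorem}. Letting $t\to\infty$ (every point of $\Rplus$ is a.s.\ eventually swallowed, Proposition~\ref{BesselProp}(c)) gives $\nu(I)=(f\cdot\mu)(I)$ a.s.; applying this to the countable family $\mathcal{Q}$ and invoking the Carath\'{e}odory uniqueness used to define $\mu$, we conclude $\nu=f\cdot\mu$ a.s.\ as measures on $\Rplus$.

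The main obstacle is the uniqueness half, and within it the verification that $(f\cdot\mu)(I\cap K_t)$ genuinely qualifies as a Doob--Meyer increasing part --- above all that it is predictable --- for which the continuity of the $A_t(\cdot)$ from Proposition~\ref{DMDecomp} together with the stability of predictability under monotone limits are both needed; by contrast, the limit interchange in the monotone class step is routine once the integrable dominating function $\|f\|_\infty(\mum{I}+X_t(I))$ is in hand.
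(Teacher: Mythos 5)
Your proof is correct and follows essentially the same route as the paper's: establish the martingale property first for indicators of intervals via Theorem~\ref{decompTheorem}, pass to simple functions by linearity and then to bounded Borel $f$ by a monotone class/dominated convergence argument, and derive uniqueness from the Doob--Meyer uniqueness applied to the supermartingale $\int_I f(x)M_t(x)\,dx$. The paper's version is considerably terser; the details you supply --- the $L^2$ bound from Proposition~\ref{IntegralBound} giving class $\mathcal{D}$, and the verification that $(f\cdot\mu)(I\cap K_t)$ is predictable via monotone limits of continuous processes --- are the right ones and are left implicit in the paper.
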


\begin{proof}
The case $f(x) = 1$ is exactly statement two of Theorem \ref{decompTheorem}. Hence the lemma also holds if $f$ is a simple function (i.e. constant on intervals). The first statement is then proved by approximating bounded, measurable functions by simple functions.

For the second statement, note that $f \geq 0$ implies that $\int_I f(x) M_t(x) \, dx$ is a supermartingale (since $M_t(x)$ is). Uniqueness of the Doob-Meyer decomposition means that there is only one predictable non-decreasing process that can be added to $f(x) M_t(x) \, dx$ to get a martingale; by the first part that process must be $\indicate{x \in K_t} f(x) \, d\mu(x)$.
\end{proof}

\section{Properties of the Measure \label{muProperties}}

Having now constructed the random measure on $\Rplus$ that was described in the introduction, we proceed to show that it has all of the properties of Theorem \ref{mainTheorem}. We begin with the Domain Markov property.

\subsection{Domain Markov Property of $\mu$}

In this section we prove that the measure $\mu$ of Section \ref{defnSection} satisfies the Domain Markov property of Theorem \ref{mainTheorem}. The idea of the proof is intuitively clear. Given $\Ft$, consider the future SLE curve and hull mapped back to $\H$ via $h_t$, i.e.
\begin{align*}
\gamma^t(s) := h_t(\gamma(t+s)), \quad K_{t,s} := h_t(K_{t+s}), \,\, s \geq 0.
\end{align*}
Then $\gamma^t$ is independent of $\Ft$ but has the law of $\gamma$. Consequently the boundary measure corresponding to $\gamma^t$ is independent of $\Ft$ and has the law of the boundary measure for $\H$. We will show that the $d$-dimensional covariant transform of the boundary measure for $\gamma$ (restricted to $\Rplus \backslash K_t$) is exactly the boundary measure for $\gamma^t$ (restricted to $h_t(\Rplus \backslash K_t)$).

It is easier to prove the above using the Brownian motions that generate $\gamma$ and $\gamma^t$, rather than the curves themselves. We will also prove the more general version in which the fixed time $t$ is replaced by a stopping time $T$. If $\{B_t, \Ft; t \geq 0 \}$ is the Brownian motion generating $\gamma$ then clearly $\{B_{T,s}, \F_{T,s}; s \geq 0 \}$ is the Brownian motion generating $\gamma^T$. It is easy to verify that $\{B_{T,s}, \F_{T+s}; s \geq 0 \}$ is also a Brownian motion under $P$; we prove a theorem about it first. Note that $B_{T,s}$ generates both the sequence of conformal maps $h_{T,s}$ and the family of martingales $M_{T,s}(x)$.

\begin{theorem}\label{futureDecompTheorem}
Let $T$ be an $\Ft$-measurable stopping time. Let $\mu$ (respectively $\mu^*$) be the unique random measure of Theorem \ref{decompTheorem} associated to $\{B_t, \Ft; t \geq 0 \}$ (respectively $\{B_{T,s}, \F_{T+s}; s \geq 0 \}$). Then
\begin{align*}
\prob{\dtrans{h_T}{\mu} = \mu^* \textnormal{ restricted to } h_T(\R_+ \backslash K_T)} = 1.
\end{align*}
Further, $\mu^*$ is also the unique random measure associated to $\{B_{T,s}, \F_{T,s}; s \geq 0 \}$.
\end{theorem}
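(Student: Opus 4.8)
The plan is to prove the two assertions in turn, the ``Further'' statement first since the covariance identity uses it, and in both cases to argue by matching Doob--Meyer decompositions relative to the filtration $\{\F_{T+s}\}_{s\ge 0}$; the two inputs from the strong Markov property are that $\{B_{T,s}\}_{s\ge 0}$ is a Brownian motion independent of $\F_T$, and that $\F_{T+s}=\F_T\vee\F_{T,s}$ up to null sets (throughout we take $T<\infty$ a.s., as in the applications). For the ``Further'' statement: $\{B_{T,s},\F_{T,s};s\ge 0\}$ is a Brownian motion whose (augmented) filtration satisfies the usual conditions, so Theorem~\ref{decompTheorem} produces a random measure $\tilde\mu^*$ from it, characterized by the fact that for every interval $I$ the supermartingale $\int_I M_{T,s}(x)\,dx$ has Doob--Meyer decomposition $N_s(I)-\tilde\mu^*(I\cap K_{T,s})$ relative to $\{\F_{T,s}\}$, with $N_\cdot(I)$ a continuous martingale and $\tilde\mu^*(I\cap K_{T,\cdot})$ continuous, non-decreasing, vanishing at $s=0$. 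This whole triple is a measurable functional of the path $B_{T,\cdot}$, hence independent of $\F_T$, so the martingale property of $N_\cdot(I)$ persists when the conditioning field is enlarged from $\F_{T,u}$ to $\F_{T+u}=\F_T\vee\F_{T,u}$, while $\tilde\mu^*(I\cap K_{T,\cdot})$ is still continuous (hence $\F_{T+\cdot}$-predictable) and non-decreasing. Thus $N_\cdot(I)-\tilde\mu^*(I\cap K_{T,\cdot})$ is also a valid Doob--Meyer decomposition relative to $\{\F_{T+s}\}$, so by uniqueness it coincides with the decomposition defining $\mu^*$; hence $\tilde\mu^*=\mu^*$.

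For the covariance identity, fix a rational interval $I=(x_1,x_2]$ and work on the event $\{x_1>\xi_T\}$, where $\xi_T:=\sup(K_T\cap\Rplus)$, so that $\overline I\subset\Rplus\backslash K_T$; put $J:=h_T(I)$. The computational core is the change of variables $y=h_T(x)$ in $X_{T+s}(I)=\int_I M_{T+s}(x)\,dx$: using \eqref{mCommute} in the form $M_{T+s}(x)=M_{T,s}(h_T(x))\,h_T'(x)^{\beta}$, together with $dx=(h_T^{-1})'(y)\,dy$ and the identity $\beta-1=-d$, one gets
\begin{align*}
X_{T+s}(I)=\int_J g(y)\,M_{T,s}(y)\,dy,\qquad g(y):=\bigl((h_T^{-1})'(y)\bigr)^{d},
\end{align*}
where $g$ is bounded on $J$ because $\overline J$ is a compact subset of $h_T(\Rplus\backslash K_T)$ on which $h_T^{-1}$ is analytic. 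The appearance of the exponent $d$ here is the only substantive calculation, and it is precisely the manifestation of the $|h_T'|^{d}$ rule of Definition~\ref{transformDefn}.

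Now I exhibit two Doob--Meyer decompositions of the supermartingale $s\mapsto X_{T+s}(I)$ relative to $\{\F_{T+s}\}$; it is continuous (Proposition~\ref{XContinuous}) and of class~$\mathcal{D}$ (Proposition~\ref{IntegralBound}), both properties being inherited under the shift by $T$. First, since $N_t(I):=\condexpect{\mu(I)}{\F_t}=\mu(I\cap K_t)+X_t(I)$, evaluating at $t=T+s$ gives $X_{T+s}(I)=N_{T+s}(I)-\mu(I\cap K_{T+s})$, where $N_{T+\cdot}(I)$ is an $\F_{T+\cdot}$-martingale and $\mu(I\cap K_{T+\cdot})$ is continuous (Proposition~\ref{DMDecomp}), hence $\F_{T+\cdot}$-predictable, non-decreasing, and zero at $s=0$ because $I\cap K_T=\emptyset$. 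Second, by the change of variables $X_{T+s}(I)=\int_J g\,M_{T,s}\,dy$, and Corollary~\ref{decompLemma}, applied in the ``future'' picture (with $\mu^*$, $h_{T,\cdot}$, $K_{T,\cdot}$, and filtration $\{\F_{T+s}\}$) but with the $\F_T$-measurable weight $g$ in place of a deterministic function, shows that $\int_{J\cap K_{T,s}}g\,d\mu^*+\int_J g\,M_{T,s}\,dy$ is an $\F_{T+s}$-martingale; since the second term is $X_{T+s}(I)$ and $\int_{J\cap K_{T,\cdot}}g\,d\mu^*$ is continuous (the martingale term is continuous by the martingale representation theorem for the Brownian filtration), non-decreasing, adapted and zero at $s=0$, this is a second valid decomposition. (The extension of Corollary~\ref{decompLemma} to the $\F_T$-measurable weight $g$ follows by conditioning on $\F_T$: for deterministic $g$ the process in question is an $\F_{T,s}$-martingale by the ``Further'' statement and the tower property, and multiplying by an $\F_T$-measurable weight preserves the $\F_{T+s}$-martingale property by independence of $\F_T$ and $\F_{T,\cdot}$; one passes from simple to bounded nonnegative $g$ by uniform approximation, using integrability of $\mu^*(J)$.) By uniqueness of the Doob--Meyer decomposition (Theorem~\ref{DMTheorem}) the two non-decreasing parts agree: $\mu(I\cap K_{T+s})=\int_{h_T(I)\cap K_{T,s}}((h_T^{-1})'(y))^{d}\,d\mu^*(y)$ for all $s\ge 0$, almost surely. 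Letting $s\uparrow\infty$ (every point of $\Rplus$ is eventually swallowed, so $I\cap K_{T+s}\uparrow I$ and $h_T(I)\cap K_{T,s}\uparrow h_T(I)$) gives $\mu(I)=\int_{h_T(I)}((h_T^{-1})')^{d}\,d\mu^*$, and substituting $y=h_T(x)$ — where $h_T'(h_T^{-1}(y))\,(h_T^{-1})'(y)=1$ makes the $d$-th powers cancel — turns this into $\dtrans{h_T}{\mu}(h_T(I))=\mu^*(h_T(I))$. Letting $I$ range over the rational intervals with $\overline I\subset\Rplus\backslash K_T$, whose images generate the Borel $\sigma$-field of $h_T(\Rplus\backslash K_T)$, yields $\dtrans{h_T}{\mu}=\mu^*$ on $h_T(\Rplus\backslash K_T)$, almost surely.

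The main obstacle is the measure-theoretic bookkeeping of the last step: one must track that each process lies in the correct (predictable or adapted) class for its filtration, that shifting by the stopping time $T$ preserves the class~$\mathcal{D}$ and continuity hypotheses needed to invoke uniqueness of the Doob--Meyer decomposition relative to $\{\F_{T+s}\}$, and that the randomness of $K_T$ is harmless (handled by restricting to rational intervals whose closure lies in $\Rplus\backslash K_T$). The conceptual heart is the upgrade of Corollary~\ref{decompLemma} to $\F_T$-measurable weights: this is where the independence of $\{B_{T,s}\}$ from $\F_T$ really does the work, and it is the rigorous form of the intuition that, conditionally on $\gamma[0,T]$, the future boundary measure is an independent copy of the original that has been stretched by the factor $|h_T'|^{d}$.
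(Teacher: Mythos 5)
Your proof is correct and follows the same overall strategy as the paper: both arguments identify $\dtrans{h_T}{\mu}$ with $\mu^*$ by matching the Doob--Meyer characterization of Theorem~\ref{decompTheorem} under the substitution $y = h_T(x)$ together with the commutation relation \eqref{mCommute}, using Corollary~\ref{decompLemma} with an $\F_T$-measurable weight. The main difference is organizational and in level of detail: you prove the ``Further'' statement first and use the resulting independence of $\{B_{T,s}\}_{s\ge0}$ from $\F_T$ to justify both the random-weight extension of Corollary~\ref{decompLemma} and the $\F_{T,s}$-measurability of $\mu^*$, two points the paper invokes without explicit argument.
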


\begin{proof}
Since $h_T'(x)^d$ is a positive, continuous function, Corollary \ref{decompLemma} implies that $h_T'(x)^d d\mu(x)$ is the unique random measure on $\Rplus$ such that $\indicate{x \in K_{T+s}} h_T'(x)^d d\mu(x)$ is $\F_{T+s}$ measurable and
\begin{align}\label{measureMart}
\indicate{x \in K_{T+s}}h_T'(x)^d d\mu(x) + M_{T+s}(x) h_T'(x)^d \, dx
\end{align}
is a martingale (in $s$) with respect to $P$ and $\F_{T+s}$. By definition of the $d$-dimensional covariant transform we have $d \dtrans{h_T}{\mu}(h_T(x)) = h_T'(x)^d d\mu(x)$, and by the identity \eqref{mCommute} we have $M_{T+s}(x)h_T'(x)^d = M_{T,s}(h_T(x))h_T'(x)$. Making the change of variables $y = h_T(x)$, which is valid for $x \in \R_+ \backslash K_T$, equation \eqref{measureMart} therefore says that
\begin{align*}
\indicate{y \in K_{T,s}} d \dtrans{h_T}{\mu}(y) + M_{T,s}(y) \, dy
\end{align*}
is a martingale with respect to $P$ and $\F_{T+s}$. On the other hand, by definition $\mu^*$ is the unique random measure on $\Rplus$ such that
\begin{align*}
\indicate{y \in K_{T,s}} d\mu^*(y) + M_{T,s}(y) \, dy
\end{align*}
is a martingale with respect to $P$ and $\F_{T+s}$. Uniqueness forces that
\begin{align*}
d \dtrans{h_T}{\mu}(y) = d\mu^*(y).
\end{align*}
Note this equality only holds for $y \in h_T(\Rplus \backslash K_T)$, which explains why $\mu^*$ must be restricted to $h_T(\R_+ \backslash K_T)$ in the statement of the theorem.

Finally, note that both the measure $\mu^*(\cdot \cap K_{T,s})$ and the martingales $M_{T,s}$ are $\F_{T,s}$-measurable (they are all determined by $B_{T,s}$, which is $\F_{T,s}$-measurable), and since
\begin{align*}
\indicate{y \in K_{T,s}} d\mu^*(y) + M_{T,s}(y) \, dy
\end{align*}
is an $\F_{T+s}$ martingale, it is also a martingale with respect to the smaller filtration $\F_{T,s}$. This proves that $\mu^*$ is the unique random measure associated to $\{B_{T,s}, \F_{T,s}; s \geq 0 \}$.
\end{proof}

From Theorem \ref{futureDecompTheorem} we easily prove the Domain Markov property of $\mu$.

\begin{corollary}[Strong Domain Markov Property of $\mu$]
Let $T$ be an $\Ft$-stopping time. Given $\Ft$, the $d$-dimensional covariant transform of $\mu$, restricted to $\Rplus \backslash K_T$, has the law of the original measure restricted to $h_T(\R_+ \backslash K_T)$.
\end{corollary}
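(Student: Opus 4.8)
The plan is to obtain the corollary as an essentially immediate consequence of Theorem~\ref{futureDecompTheorem} together with the strong Markov property of Brownian motion; the proof is short because Theorem~\ref{futureDecompTheorem} already carries all of the analytic content. First I would recall that Theorem~\ref{futureDecompTheorem} identifies $\dtrans{h_T}{\mu}$ --- which is precisely the $d$-dimensional covariant transform of $\mu$ restricted to $\Rplus \backslash K_T$ --- with the measure $\mu^*$ of Theorem~\ref{decompTheorem}, when both are restricted to $h_T(\Rplus \backslash K_T)$, up to an event of $P$-measure zero. Moreover the last assertion of that theorem gives that $\mu^*$ is the unique random measure associated to the shifted data $\{B_{T,s}, \F_{T,s}; s \geq 0\}$.

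Next I would invoke the strong Markov property: $\{B_{T,s}; s \geq 0\}$ is independent of $\F_T$ and has the same law as $\{B_t; t \geq 0\}$. Since the construction in Theorem~\ref{decompTheorem} realizes the associated random measure as a fixed measurable functional of the driving Brownian path (together with $P$ and $\Ft$), it follows that $\mu^*$ is independent of $\F_T$ and has the same law as the original measure $\mu$. Now condition on $\F_T$: the set $h_T(\Rplus \backslash K_T)$ becomes a fixed Borel subset $S$ of $\Rplus$, and because $\mu^*$ is independent of $\F_T$ and distributed like $\mu$, the conditional law of $\mu^*$ restricted to $S$ is exactly the unconditional law of $\mu$ restricted to $S$. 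Combining this with the identification from Theorem~\ref{futureDecompTheorem} gives that, given $\F_T$, the $d$-dimensional covariant transform of $\mu$ restricted to $\Rplus \backslash K_T$ has the law of $\mu$ restricted to $h_T(\Rplus \backslash K_T)$, which is the claim.

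The only point requiring care --- and hence the ``main obstacle,'' though it is a bookkeeping matter rather than a real difficulty --- is the measurability argument: one must confirm that the random measure furnished by Theorem~\ref{decompTheorem} depends only on the driving Brownian motion (and on $P$ and its filtration), so that substituting the shifted path $B_{T,\cdot}$ and using its independence from $\F_T$ is legitimate, and that $h_T$, $K_T$, hence $S = h_T(\Rplus \backslash K_T)$, are $\F_T$-measurable. Both facts are routine given the explicit Doob--Meyer construction of Section~\ref{ConstructSection}. I would close by noting that taking $T \equiv t$ deterministic recovers property~4 (the Domain Markov Property) in the statement of Theorem~\ref{mainTheorem}.
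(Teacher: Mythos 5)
Your proposal is correct and follows essentially the same route as the paper's proof: cite Theorem \ref{futureDecompTheorem} to identify $\dtrans{h_T}{\mu}$ with $\mu^*$, then use the strong Markov property to get that $\mu^*$ (being a measurable functional of $B_{T,\cdot}$) is independent of $\F_T$ and equidistributed with $\mu$. The paper states this even more tersely; your additional remarks about measurability of $h_T$, $K_T$, and the functional dependence of the measure on the driving Brownian motion are routine elaborations of the same argument, not a different approach.
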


\begin{proof}
Let $\mu^*$ be the random measure associated to $\{B_{T,s}, \F_{T,s}; s \geq 0 \}$. By Theorem \ref{futureDecompTheorem} we know that $\mu_{d, h_T} = \mu^*$ restricted to $h_T(\Rplus \backslash K_T)$, with probability one. But $B_{T,s}$ is independent of $\F_T$ by the strong Markov property of Brownian motion, and therefore $\mu^*$ is also independent of $\F_T$.
\end{proof}

\begin{remark}
The strong version of the Domain Markov property is useful to apply at stopping times $T$ for which $\gamma(T) \in \Rplus$ almost surely. In such a case it is easy to see that $h_T(\Rplus \backslash K_T) = \Rplus$ almost surely, and the result is that the $\F_T$-conditional law of $\mu_{d, h_T}$ is the same as the original law of $\mu$ on \textit{all} of $\Rplus$ (no restriction required). In this sense these types of stopping times are renewal times for the boundary measure.
\end{remark}

We can also restate the Domain Markov property in an alternative but equivalent way.

\begin{corollary}[Alternative Statement of Domain Markov Property of $\mu$]\label{alternateDM}
For any $\Ft$-stopping time $T$, the $\F_T$-conditional law of the measure-valued process $\indicate{x \in K_{T+t}} d\mu(x), t \geq 0,$ is the law of the process
\begin{align*}
\indicate{x \in K_T} d\mu(x) + \indicate{h_T(x) \in K_t^*} \leb{h_T'(x)}^{-d} d \mu^*(h_T(x))
\end{align*}
where $\mu^*$ and $K_t^*$ are independent copies of $\mu$ and $K_t$, respectively.
\end{corollary}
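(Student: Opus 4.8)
The corollary is a repackaging of Theorem~\ref{futureDecompTheorem} together with the strong Markov property of Brownian motion; the plan is to make the bookkeeping explicit. Let $\mu^*$ be the random measure associated to $\{B_{T,s}, \F_{T,s}; s \geq 0\}$ as in Theorem~\ref{futureDecompTheorem}, and write $K^*_s := K_{T,s} = h_T(K_{T+s})$ for the hull that the flow $h_{T,\cdot}$ generates. Since both $\mu^*$ and $(K^*_s)_{s \geq 0}$ are functions of the Brownian motion $B_{T,\cdot}$, which by the strong Markov property is independent of $\F_T$ and has the law of $B$, the pair $(\mu^*, (K^*_s)_{s \geq 0})$ is an independent copy of $(\mu, (K_s)_{s \geq 0})$; this is the only place that independence enters. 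It therefore suffices to establish, on one event of full probability, the identity of measure-valued processes
\begin{align*}
\indicate{x \in K_{T+t}} \, d\mu(x) = \indicate{x \in K_T} \, d\mu(x) + \indicate{h_T(x) \in K^*_t} \, \leb{h_T'(x)}^{-d} \, d\mu^*(h_T(x)), \quad t \geq 0,
\end{align*}
where the second term on the right is understood as a measure supported on $\Rplus \backslash K_T$, on which $h_T$ is a $C^1$ diffeomorphism onto its image. Taking $\F_T$-conditional laws of the left side --- under which $K_T$, $\mum{\cdot \cap K_T}$, $h_T$ and $h_T'$ are fixed --- then gives the law of the right side with $\mu^*,K^*_\cdot$ replaced by independent copies of $\mu, K_\cdot$, which is exactly the asserted statement.

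For the hull part, recall $T_z = \inf\{s \geq 0 : h_s(z) = 0\}$ and the flow identity $h_{T+t}(z) = h_{T,t}(h_T(z))$ from \eqref{hCommute}. For $x \in \Rplus$ with $x \notin K_T$ (equivalently $T < T_x$), the $h_{T,\cdot}$-absorption time of $h_T(x)$ equals $T_x - T$, so $x \in K_{T+t}$ iff $h_T(x) \in K_{T,t} = K^*_t$; and if $x \in K_T$ then $x \in K_{T+t}$ trivially. Hence
\begin{align*}
\indicate{x \in K_{T+t}} = \indicate{x \in K_T} + \indicate{x \in \Rplus \backslash K_T} \, \indicate{h_T(x) \in K^*_t}
\end{align*}
for every $x \in \Rplus$ and every $t \geq 0$, on a single event of full probability.

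It remains to rewrite $d\mu(x)$ on $\Rplus \backslash K_T$. By Theorem~\ref{futureDecompTheorem}, $\dtrans{h_T}{\mu}$ agrees with $\mu^*$ on $h_T(\Rplus \backslash K_T)$ almost surely; unwinding the definition of the $d$-dimensional covariant transform gives $d\mu^*(h_T(x)) = \leb{h_T'(x)}^{d} \, d\mu(x)$ for $x \in \Rplus \backslash K_T$, and since $h_T'(x) > 0$ there by Proposition~\ref{BesselProp}(d),(h), this inverts to $d\mu(x) = \leb{h_T'(x)}^{-d} \, d\mu^*(h_T(x))$ on $\Rplus \backslash K_T$. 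Splitting $\indicate{x \in K_{T+t}} \, d\mu(x)$ according to whether $x \in K_T$, substituting this identity on the complement, and inserting the hull decomposition of the previous paragraph yields the displayed identity, simultaneously for all $t$: none of the three ingredients (the hull decomposition, the measure identity, the positivity of $h_T'$) involves $t$ in a way that could fail on a $t$-dependent null set.

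There is no genuine obstacle beyond care with the bookkeeping. The two points that warrant attention are (i) arranging the hull decomposition and the measure identity $d\mu(x) = \leb{h_T'(x)}^{-d} d\mu^*(h_T(x))$ to hold on a single full-probability event valid for all $t \geq 0$ at once, so that one has an equality of \emph{processes} rather than merely of one-dimensional marginals, and (ii) justifying the change of variables $y = h_T(x)$, which is legitimate precisely because $h_T$ restricts to an orientation-preserving $C^1$ diffeomorphism of $\Rplus \backslash K_T$ onto $h_T(\Rplus \backslash K_T)$ (with $h_T'>0$ there). Both follow immediately from facts already recorded above, so the proof reduces to assembling them in the order indicated.
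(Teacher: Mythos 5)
The paper gives no formal proof of Corollary~\ref{alternateDM}; it is stated as a direct repackaging of Theorem~\ref{futureDecompTheorem} together with the strong Markov property, and the remark that follows it in the paper is exactly the informal version of your argument. Your write-up supplies the natural proof, and it is correct: you reduce to the a.s.\ identity
\begin{align*}
\indicate{x \in K_{T+t}} \, d\mu(x) = \indicate{x \in K_T} \, d\mu(x) + \indicate{h_T(x) \in K_{T,t}} \, \leb{h_T'(x)}^{-d} \, d\mu^*(h_T(x)),
\end{align*}
where $\mu^*$ is the measure associated to $B_{T,\cdot}$; the hull decomposition follows from $h_{T+s} = h_{T,s}\circ h_T$ and the fact that the $h_{T,\cdot}$-absorption time of $h_T(x)$ is $T_x - T$; the measure identity on $\Rplus\backslash K_T$ is exactly $\dtrans{h_T}{\mu} = \mu^*$ from Theorem~\ref{futureDecompTheorem} unwound through Definition~\ref{transformDefn}; and the independence needed to pass from an a.s.\ identity on the original probability space to a conditional-law statement with independent copies is precisely the strong Markov property of $B$, which was already invoked in proving the strong Domain Markov corollary. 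One very small point worth tightening: Proposition~\ref{BesselProp}(h) only gives $0 \le h_T'(x) \le 1$, so strict positivity $h_T'(x) > 0$ on $\Rplus\backslash K_T$ should be attributed to $h_T$ being a conformal (hence locally injective) map rather than to~(d) and~(h) alone; this is standard and does not affect the argument.
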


\begin{remark}
Corollary \ref{alternateDM} can be thought of in the following way: suppose we have observed the SLE hull and the corresponding measure up to the stopping time $T$. Given that, the law of the part of $\mu$ that is generated after time $T$ can be realized by taking the measure $\mu^*$ corresponding to a new and independent SLE hull $K_t^*$ in the upper half plane, and then transforming (in the $d$-dimensional covariant way) the restriction of $\mu^*$ to $h_T(\Rplus \backslash K_T)$ back to $\Rplus \backslash K_T$ via $h_T^{-1}$.
\end{remark}

\subsection{General Properties}

In this section we show that the boundary measure satisfies all the properties of Theorem \ref{mainTheorem}.

\begin{proposition} \label{A_tIncrease}
Fix a closed interval $I \subset \Rplus$. Then the process $\mum{I \cap K_t}$ is flat on the open set of times $\{ t \geq 0 : \gamma(t) \not \in I \}$.
\end{proposition}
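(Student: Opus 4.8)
The plan is to leverage the Doob-Meyer decomposition machinery already established, together with the continuity results of Section \ref{ConstructSection}. Recall that $\mum{I \cap K_t} = A_t(I)$, the continuous non-decreasing part in the decomposition $X_t(I) = N_t(I) - A_t(I)$ of Proposition \ref{DMDecomp}, where $N_t(I)$ is a continuous martingale. So it suffices to show that $A_t(I)$ is constant on each connected component of the open set $\{t \geq 0 : \gamma(t) \notin I\}$. First I would reduce to the case where $I$ is closed, exactly as in the proof of Proposition \ref{contNotInI}: the processes $X_t(I)$ and $X_t(\overline{I})$ are indistinguishable since $\overline{I} \setminus I$ has at most two points, hence so are their Doob-Meyer decompositions by uniqueness, and thus $A_t(I)$ and $A_t(\overline{I})$ are indistinguishable.

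With $I$ closed, the key observation is that $\{t \geq 0 : \gamma(t) \notin I\}$ decomposes as the countable union $\bigcup_{\epsilon \in \mathbb{Q}_+} \bigcup_{n \geq 0} (\ep{\tau_{2n}}, \ep{\tau_{2n+1}})$ of downcrossing intervals of $Y_t(I)$, exactly as shown in Proposition \ref{contNotInI}. On each such downcrossing interval, Lemma \ref{contAway} tells us that the process $\ep{Z_n}(t) = X_{t \wedge \ep{\tau_{2n+1}}}(I) - X_{t \wedge \ep{\tau_{2n}}}(I)$ is a continuous martingale, and in particular the restriction of $X_t(I)$ to $(\ep{\tau_{2n}}, \ep{\tau_{2n+1}})$ agrees (up to an additive $\F_{\ep{\tau_{2n}}}$-measurable constant) with a continuous martingale. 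On this interval we therefore have $X_t(I) = N_t(I) - A_t(I)$ with the left side a martingale increment; the hard part is to conclude that $A_t(I)$ is flat there.

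The cleanest way to finish is a localization argument: fix $\epsilon \in \mathbb{Q}_+$ and $n \geq 0$, and consider the stopped process $X_{t \wedge \ep{\tau_{2n+1}}}(I) - X_{t \wedge \ep{\tau_{2n}}}(I) = N_{t \wedge \ep{\tau_{2n+1}}}(I) - N_{t \wedge \ep{\tau_{2n}}}(I) - (A_{t \wedge \ep{\tau_{2n+1}}}(I) - A_{t \wedge \ep{\tau_{2n}}}(I))$. By Lemma \ref{contAway} the left side is a martingale, and $N_{t \wedge \ep{\tau_{2n+1}}}(I) - N_{t \wedge \ep{\tau_{2n}}}(I)$ is a martingale as well (an optional-stopping difference of the martingale $N_t(I)$). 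Hence the continuous adapted process $C_t := A_{t \wedge \ep{\tau_{2n+1}}}(I) - A_{t \wedge \ep{\tau_{2n}}}(I)$ is itself a martingale; but it is also non-decreasing (difference of values of the non-decreasing $A$ at ordered stopping times) with $C_0 = 0$, and a non-decreasing martingale started at zero is identically zero. Therefore $A_t(I)$ is constant on $[\ep{\tau_{2n}}, \ep{\tau_{2n+1}}]$ for every $\epsilon \in \mathbb{Q}_+$ and $n \geq 0$, almost surely, and taking the countable union over all such pairs shows $A_t(I) = \mum{I \cap K_t}$ is flat on all of $\{t \geq 0 : \gamma(t) \notin I\}$.

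I expect the main subtlety to be the bookkeeping around the endpoints of the crossing intervals and the passage from "flat on each $(\ep{\tau_{2n}}, \ep{\tau_{2n+1}})$" to "flat on the full open set": one must be careful that the countably many null events (one per $(\epsilon, n)$ pair, plus the null event on which the decomposition $\{t : \gamma(t) \notin I\} = \bigcup (\ep{\tau_{2n}}, \ep{\tau_{2n+1}})$ fails, plus the null event where $X_t(I)$ fails to be continuous) combine into a single null event. Continuity of $A_t(I)$, already available from Proposition \ref{XContinuous} and Proposition \ref{DMDecomp}, is what lets one pass from flatness on a dense union of open intervals to flatness on the whole open set without extra work. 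Everything else is a direct citation of Lemma \ref{contAway}, Proposition \ref{DMDecomp}, and the structure established in the proof of Proposition \ref{contNotInI}.
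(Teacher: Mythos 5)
Your proof is correct, but it takes a genuinely different and considerably more elaborate route than the paper's. The paper disposes of this in one sentence: on any open interval of time where $\gamma(t) \notin I$ the set of swallowed points $I \cap K_t$ is not growing, hence neither is $\mum{I \cap K_t}$. You instead rerun the Doob--Meyer machinery of Section \ref{ConstructSection}: you identify $\mum{I \cap K_t}$ with the increasing process $A_t(I)$ from Proposition \ref{DMDecomp}, use Lemma \ref{contAway} together with optional stopping for the uniformly integrable martingale $N_t(I)$ to conclude that $C_t := A_{t \wedge \ep{\tau_{2n+1}}}(I) - A_{t \wedge \ep{\tau_{2n}}}(I)$ is a non-decreasing martingale started at zero and hence identically zero, and then take the countable union of downcrossing intervals from the proof of Proposition \ref{contNotInI}. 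Both arguments are valid; the paper's is far shorter, while yours is somewhat more cautious in that it never has to reason directly about whether $I \cap K_t$ can grow while $\gamma(t) \notin I$. In fact the set $I \cap K_t$ can jump at such a time --- the curve can hit a point of $\Rplus$ strictly to the right of $I$ and engulf the unswallowed remainder of $I$ without ever touching $I$ --- so the paper's one-liner is implicitly using that such swallowed-but-unhit chunks carry no $\mu$-mass; your martingale argument sidesteps this geometric case entirely. Two small points: your opening reduction to closed $I$ is unnecessary since the statement already assumes $I$ is closed, and when passing to the full open set you should also invoke (as you already cite from Proposition \ref{DMDecomp}) that $A_t(I)$ is constant for $t \geq T_I$, since the union of downcrossing intervals only accounts for times strictly before $T_I$.
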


\begin{proof}
On any open interval of time for which $\gamma(t) \not \in I$ the set of swallowed points $I \cap K_t$ is not increasing, hence the same can be said of the process $\mum{I \cap K_t}$.
\end{proof}

\begin{corollary} \label{ExpectedMeasure}
For an interval $I \subset \Rplus$ we have
\begin{align*}
\expect{\mum{I \cap K_t}} = \int_I \int_{x^2/2t}^{\infty} \frac{x^{-\beta}}{\Gamma(3a - 1/2)} u^{3a-3/2} e^{-u} \, du \, dx.
\end{align*}
\end{corollary}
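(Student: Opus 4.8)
The plan is to compute $\expect{\mum{I \cap K_t}}$ by using the martingale identity that defines $\mu$ together with the explicit formula for $\expect{M_t(x)}$ from Proposition \ref{localMartProp}. Recall from Theorem \ref{decompTheorem} (or the discussion immediately following Proposition \ref{muRestricted}) that for any interval $I \subset \Rplus$ the process
\begin{align*}
\mum{I \cap K_t} + X_t(I) = \mum{I \cap K_t} + \int_I M_t(x)\, dx
\end{align*}
is a martingale with respect to $P$ and $\Ft$. Taking expectations and using that this martingale is constant in $t$, its value at time $t$ equals its value at time $0$; since $K_0 \cap I = \emptyset$ and $M_0(x) = x^{-\beta}$, we get
\begin{align*}
\expect{\mum{I \cap K_t}} + \expect{X_t(I)} = \int_I x^{-\beta}\, dx.
\end{align*}
Hence the whole computation reduces to evaluating $\expect{X_t(I)} = \int_I \expect{M_t(x)}\, dx$, where the interchange of expectation and integral is the routine Fubini argument already used for $X_t(I)$ in Section \ref{XtMeasure} (its finiteness was noted there).

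The second step is simply to substitute the explicit formula for $\expect{M_t(x)}$ proved in Proposition \ref{localMartProp}, namely
\begin{align*}
\expect{M_t(x)} = \Qxprob{T_x > t}\, x^{-\beta} = \frac{x^{-\beta}}{\Gamma(3a - 1/2)} \int_0^{x^2/2t} u^{3a-3/2} e^{-u}\, du,
\end{align*}
and to observe that, since $\int_0^\infty u^{3a-3/2} e^{-u}\, du = \Gamma(3a-1/2)$, we have
\begin{align*}
x^{-\beta} - \expect{M_t(x)} = \frac{x^{-\beta}}{\Gamma(3a-1/2)} \int_{x^2/2t}^{\infty} u^{3a-3/2} e^{-u}\, du.
\end{align*}
Combining this with the displayed identity above and integrating over $x \in I$ yields exactly the claimed expression for $\expect{\mum{I \cap K_t}}$.

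There is essentially no serious obstacle here; the statement is a bookkeeping corollary of results already established. The only points requiring a word of care are (a) the justification of Fubini's theorem for $\expect{\int_I M_t(x)\, dx}$, which is covered by the integrability bounds in Section \ref{XtMeasure}, and (b) noting that $\beta = 1-d = 4a-1$ so that the exponent $x^{-\beta}$ here is the same as the $x^{d-1}$ appearing in the corresponding formula of Theorem \ref{mainTheorem} (and that $(12-\kappa)/(2\kappa) = 3a - 1/2$ under $a = 2/\kappa$, reconciling the two forms of the Gamma-function normalization). Beyond these cosmetic identifications of constants, the proof is immediate.
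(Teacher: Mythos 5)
Your proof is correct and follows exactly the same route as the paper: take expectations in the martingale identity $\mum{I \cap K_t} + \int_I M_t(x)\,dx$ to equate the expectation at time $t$ with that at time $0$, then substitute the explicit formula for $\expect{M_t(x)}$ from Proposition \ref{localMartProp} and rewrite the complementary gamma integral. The only minor wording slip is the phrase ``this martingale is constant in $t$'' — you mean its expectation is constant in $t$ — but the argument is carried out correctly.
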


\begin{proof}
Since $\indicate{x \in K_t} \, d\mu(x) + M_t(x) \, dx$ is a martingale we have
\begin{align*}
\expect{\mum{I \cap K_t} + \int_I M_t(x) \, dx} = \expect{\mum{I \cap K_0} + \int_I M_0(x) \, dx} = \int_I x^{-\beta} \, dx.
\end{align*}
Therefore
\begin{align*}
\expect{\mum{I \cap K_t}} = \int_I x^{-\beta} - \expect{M_t(x)} \, dx.
\end{align*}
Substituting in the exact expression \eqref{martExpect} for $\expect{M_t(x)}$ completes the proof.
\end{proof}

\begin{remark}
Note that $\expect{\mum{I}} = \expect{\mum{I \cap K_{\infty}}} = \int_I x^{-\beta} \, dx$.
\end{remark}

\begin{lemma}[Scaling property of $\mu$]\label{muScaling}
For any $r > 0$, the random measure $\mu(r \cdot)$ has the same law as the random measure $r^d \mu(\cdot)$.
\end{lemma}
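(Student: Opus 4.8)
The plan is to exploit the uniqueness characterization of $\mu$ from Theorem~\ref{decompTheorem} together with the scaling property of the Bessel flow from Proposition~\ref{BesselProp}(a), and the scaling property of the SLE hulls. Concretely, for fixed $r>0$ I would define a new Brownian motion $\tilde{B}_s := r^{-1} B_{r^2 s}$, which is again a standard Brownian motion generating its own filtration $\tilde{\F}_s = \F_{r^2 s}$ satisfying the usual conditions. The associated Loewner flow $\tilde{h}_s$ is then related to the original by $\tilde{h}_s(x) = r^{-1} h_{r^2 s}(rx)$ (this is precisely the content of Proposition~\ref{BesselProp}(a) at the level of the flow, and the corresponding statement $\tilde{K}_s = r^{-1} K_{r^2 s}$ holds for the hulls). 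Differentiating in $x$ gives $\tilde{h}_s'(x) = h_{r^2 s}'(rx)$, so the martingale built from $\tilde{h}$, call it $\tilde{M}_s(x)$, satisfies
\begin{align*}
\tilde{M}_s(x) = \left( \frac{\tilde{h}_s'(x)}{\tilde{h}_s(x)} \right)^\beta = \left( \frac{h_{r^2 s}'(rx)}{r^{-1} h_{r^2 s}(rx)} \right)^\beta = r^\beta M_{r^2 s}(rx).
\end{align*}

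Next I would let $\tilde{\mu}$ be the unique random measure of Theorem~\ref{decompTheorem} associated to the data $\{\tilde{B}_s, \tilde{\F}_s; s \geq 0\}$. Since $\{\tilde{B}_s, \tilde{\F}_s\}$ has exactly the same law as $\{B_t, \F_t\}$, the measure $\tilde{\mu}$ has the same law as $\mu$. The key step is then to identify $\tilde{\mu}$ explicitly in terms of $\mu$ by checking that the measure $\nu$ defined by $\nu(A) := r^{d} \mu(r^{-1} A)$ — equivalently $d\nu(x) = r^d\, d\mu(x/r)$ — satisfies the two defining properties of $\tilde{\mu}$. For property (i), $\nu(\cdot \cap \tilde{K}_s) = \nu(\cdot \cap r^{-1} K_{r^2 s})$ is a time-changed and spatially rescaled version of the predictable process $\mu(\cdot \cap K_{\cdot})$, hence predictable with respect to $\tilde{\F}_s = \F_{r^2 s}$. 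For property (ii), I would write, for an interval $I$,
\begin{align*}
\nu(I \cap \tilde{K}_s) + \int_I \tilde{M}_s(x)\, dx = r^d \mu(r^{-1}I \cap K_{r^2 s}) + r^{\beta}\int_I M_{r^2 s}(rx)\, dx,
\end{align*}
and after the change of variables $y = rx$ in the integral (noting $d = 1 - \beta$, so $r^\beta \cdot r^{-1} = r^{\beta - 1} = r^{-d}$, and pulling out $r^d$) this equals $r^d\left( \mu(r^{-1}I \cap K_{r^2 s}) + \int_{r^{-1}I} M_{r^2 s}(y)\, dy \right)$, which is $r^d$ times a martingale in the time parameter $r^2 s$ relative to $\F_{r^2 s} = \tilde{\F}_s$, hence a martingale in $s$. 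By the uniqueness clause of Theorem~\ref{decompTheorem}, $\nu = \tilde{\mu}$ almost surely, so $r^d \mu(r^{-1}\cdot)$ has the same law as $\tilde{\mu}$, which has the same law as $\mu$. Replacing $r$ by $r^{-1}$ (or equivalently reading the statement the other way) gives that $\mu(r\cdot)$ has the same law as $r^d\mu(\cdot)$.

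The main obstacle is bookkeeping rather than anything deep: one must be careful that the two ingredients of property (ii) — the measure part and the $M$ part — rescale with consistent powers of $r$ so that a common factor $r^d$ can be extracted, and that the martingale property survives the deterministic time change $t \mapsto r^2 t$ (it does, since $\{\F_{r^2 s}\}_s$ is just a reparametrization of the original filtration and a martingale composed with a deterministic increasing time change, adapted to the reparametrized filtration, is still a martingale). A secondary point worth stating carefully is that the rescaled hull identity $\tilde{K}_s = r^{-1}K_{r^2 s}$ follows from the scaling covariance of the Loewner equation with driving function $U_t = -B_t$ under $U_t \mapsto r^{-1}U_{r^2 t}$, which is standard; it guarantees that swallowing times transform as $\tilde{T}_x = r^{-2} T_{rx}$ so that $T_I$ for $\tilde{\mu}$ is $r^{-2}T_{rI}$, keeping all the stopping-time structure compatible.
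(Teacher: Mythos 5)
Your overall strategy --- relating $\mu$ to the measure $\tilde{\mu}$ associated to the rescaled Brownian motion $\tilde{B}_s = r^{-1}B_{r^2 s}$ and invoking the uniqueness clause of Theorem~\ref{decompTheorem} --- is essentially the same as the paper's proof, which likewise rests on the Bessel scaling $h_t(x) \equiv r^{-1}h_{r^2 t}(rx)$ and the consequent law-equivalence of $\{M_t(x)\}$ with $\{r^{\beta}M_{r^2 t}(rx)\}$; the paper simply states more compactly that supermartingale families equal in law have Doob-Meyer increasing parts equal in law. Making the appeal to uniqueness explicit, as you do, is a more careful phrasing of the same idea.

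However, there is a direction error in your identification of $\tilde{\mu}$ that makes the verification step fail as written. With $\tilde{K}_s = r^{-1}K_{r^2 s}$ (which you correctly derive) and your definition $\nu(A) := r^d\mu(r^{-1}A)$, the first summand actually evaluates to $\nu(I\cap\tilde{K}_s) = r^d\,\mu\bigl(r^{-1}I \cap r^{-2}K_{r^2 s}\bigr)$, not $r^d\mu(r^{-1}I\cap K_{r^2 s})$ as you wrote: the hull picks up an extra factor $r^{-1}$ from the outer rescaling, giving $r^{-2}K_{r^2 s}$, and nothing simplifies. Similarly, the substitution $y = rx$ in $r^{\beta}\int_I M_{r^2 s}(rx)\,dx$ gives $r^{\beta-1}\int_{rI} M_{r^2 s}(y)\,dy = r^{-d}\int_{rI} M_{r^2 s}(y)\,dy$ --- the domain becomes $rI$, not $r^{-1}I$, and the prefactor is $r^{-d}$, not $r^d$. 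So the claimed display does not hold, and your $\nu$ is not in fact the unique measure of Theorem~\ref{decompTheorem} for $\tilde{B}$. The fix is to take $\nu(A) := r^{-d}\mu(rA)$ instead: then $\nu(I\cap\tilde{K}_s) = r^{-d}\mu(rI\cap K_{r^2 s})$, both summands carry the common factor $r^{-d}$, and what remains in the bracket is $\mu(rI\cap K_{t}) + \int_{rI}M_{t}(y)\,dy$ evaluated along the deterministic time change $t = r^2 s$, which is indeed an $\mathcal{F}_{r^2 s}$-martingale. Uniqueness then yields $\tilde{\mu}(A) = r^{-d}\mu(rA)$ almost surely, and since $\tilde{\mu}$ has the same law as $\mu$, you get $\mu(r\,\cdot) \equiv r^d\mu(\cdot)$ directly, with no need for the final $r\mapsto r^{-1}$ swap. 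The error is purely algebraic and contained; the architecture of your argument is sound.
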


\begin{proof}
Using the Bessel scaling relation $h_t(x) \equiv r^{-1} h_{r^2t}(rx)$ of Proposition \ref{BesselProp}(a), it is easy to verify that the process $\{ M_t(x); t \geq 0, x > 0\}$ has the same law as
\begin{align*}
\left \{ r^{\beta} M_{r^2t}(rx); t \geq 0, x > 0 \right \}
\end{align*}
for any $r > 0$. Hence for any interval $I \subset \Rplus$,
\begin{align*}
\int_I r^{\beta} M_{r^2t}(rx) \, dx \equiv \int_I M_t(x) \, dx.
\end{align*}
Changing variables on the left yields
\begin{align*}
\int_{rI} M_{r^2t}(x) \, dx \equiv r^d \int_{I} M_t(x) \, dx.
\end{align*}
The random measure associated to the left hand side is $\mu(r \cdot)$ and the random measure associated to the right hand side is $r^d \mu(\cdot)$. This completes the proof.
\end{proof}

\begin{lemma}\label{chargesZero}
Let $J_{\epsilon} := (0, \epsilon)$. Then
\begin{align*}
\prob{\mum{J_{\epsilon}} > 0 \textnormal{ for all } \epsilon > 0} = 1.
\end{align*}
\end{lemma}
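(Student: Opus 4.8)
The plan is to combine the scaling relation of Lemma~\ref{muScaling} with Blumenthal's $0$--$1$ law for the Brownian filtration $\Ft$. It is convenient to work with the half-open intervals $(0,1/n]$: set $E_n := \{\mum{(0,1/n]} = 0\}$ and $A := \bigcup_{n \ge 1} E_n$. Since $(0,1/(n+1)] \subset (0,1/n]$, the quantity $\mum{(0,1/n]}$ is non-increasing in $n$, so the $E_n$ are increasing and $\prob{A} = \lim_n \prob{E_n}$. Moreover $A^c \subset \{\mum{J_\epsilon} > 0 \text{ for all } \epsilon > 0\}$: given $\epsilon > 0$, pick $n$ with $1/n < \epsilon$, so that on $A^c$ we have $\mum{J_\epsilon} \ge \mum{(0,1/n]} > 0$. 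Hence it suffices to prove $\prob{A} = 0$.

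Scaling disposes of two of the three ingredients. By Lemma~\ref{muScaling}, $\mum{(0,r]}$ has the law of $r^d\,\mum{(0,1]}$, so $\prob{\mum{(0,r]} = 0} = \prob{\mum{(0,1]} = 0} =: p$ for every $r>0$; thus $\prob{A} = p$. And $p < 1$, because by the remark after Corollary~\ref{ExpectedMeasure} we have $\expect{\mum{(0,1]}} = \int_0^1 x^{-\beta}\,dx = 1/d > 0$, so the non-negative random variable $\mum{(0,1]}$ is positive with positive probability. It therefore remains to show $\prob{A} \in \{0,1\}$, for which I would argue that $A$ belongs to the germ $\sigma$-field $\mathcal{F}_{0+} = \bigcap_{\delta > 0}\mathcal{F}_\delta$, which is $P$-trivial since $\Ft$ is the augmented Brownian filtration.

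The crucial observation is that $\mum{(0,1/n]}$ is determined by the SLE run only up to the swallowing time $T_{1/n}$ of the endpoint $1/n$: as $(0,1/n]\subset K_t$ for all $t\ge T_{1/n}$ and the process $\mum{\cdot\cap K_t}$ is adapted (Proposition~\ref{muRestricted}), the variable $\mum{(0,1/n]} = \mum{(0,1/n]\cap K_{T_{1/n}}}$ is $\mathcal{F}_{T_{1/n}}$-measurable, whence $E_n\cap\{T_{1/n}<\delta\}\in\mathcal{F}_\delta$. Fixing $\delta>0$, set $A_\delta := \bigcup_n\bigl(E_n\cap\{T_{1/n}<\delta\}\bigr)\in\mathcal{F}_\delta$. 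Plainly $A_\delta\subset A$; conversely, $T_{1/n}\downarrow 0$ almost surely (for any $t>0$ the hull $K_t$ contains a neighbourhood of $0$, so $T_{1/n}\le t$ once $n$ is large), and $\mum{(0,1/n]}=0$ for a single $n$ forces it for all larger $n$, so almost surely every $\omega\in A$ lies in $E_n\cap\{T_{1/n}<\delta\}$ for $n$ large; hence $A = A_\delta$ up to a null set. Since $\delta>0$ was arbitrary and $\mathcal{F}_0$ contains the $P$-null sets, $A\in\mathcal{F}_{0+}$, so $\prob{A}\in\{0,1\}$, and with $\prob{A}=p<1$ this gives $\prob{A}=0$. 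The one genuinely delicate point is this last step: $E_n$ alone is $\mathcal{F}_{T_{1/n}}$-measurable with $T_{1/n}$ a strictly positive stopping time, so it is not a germ event, and one must exploit $T_{1/n}\to 0$ to rewrite the union $\bigcup_n E_n$ in an $\mathcal{F}_\delta$-measurable form for every $\delta$; the remaining ingredients (scaling, positivity of the expectation, Blumenthal) are routine.
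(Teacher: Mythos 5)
Your proof is correct and follows essentially the same strategy as the paper's: Blumenthal's $0$--$1$ law to show the event has probability $0$ or $1$, together with the scaling relation of Lemma~\ref{muScaling} and $\expect{\mum{(0,1]}} > 0$ to rule out probability~$0$. The one place you go beyond the paper is in carefully justifying the $\F_{0+}$-measurability step --- the paper simply asserts that $\F_{T_\epsilon}$-measurability of each $\{\mum{J_\epsilon} > 0\}$ gives $\F_{0+}$-measurability of the intersection, while your argument via the events $A_\delta$ and $T_{1/n} \downarrow 0$ supplies the missing detail; this is a genuine and useful clarification, though not a different proof.
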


\begin{proof}
The event $\{ \mum{J_{\epsilon}} > 0 \}$ is $\F_{T_{\epsilon}}$-measurable, hence
\begin{align*}
\left \{ \mum{J_{\epsilon}} > 0  \textnormal{ for all } \epsilon > 0 \right \} = \bigcap_{\epsilon > 0} \left \{ \mum{J_{\epsilon}} > 0 \right \}
\end{align*}
is $\F_{0+}$-measurable. By Blumenthal $0$-$1$ law, the latter event has probability either zero or one. Hence it is enough to show that there exists a $\rho > 0$ such that $\prob{\mum{J_{\epsilon}}} \geq \rho$ for every $\epsilon > 0$. By the scaling rule of Lemma \ref{muScaling} we have
\begin{align*}
\prob{\mum{J_{\epsilon}} > 0} = \prob{\epsilon^{-d} \mum{J_1} > 0} = \prob{\mum{J_1} > 0},
\end{align*}
and clearly $\prob{\mum{J_1} > 0} > 0$ since $\expect{\mum{J_1}} > 0$.
\end{proof}

\begin{proposition}\label{massToHit}
Let $I \subset \Rplus$ be an open interval. Let $\tau_I := \left \{ t \geq 0 : \gamma(t) \in I \right \}$. Then
\begin{align*}
\prob{\mum{I} > 0 \left | \tau_I < \infty \right.} = 1,
\end{align*}
where $\tau_I := \inf \left \{ t \geq 0 : \gamma(t) \in I \right \}$.
\end{proposition}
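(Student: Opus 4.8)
The goal is to show that on the event $\{\tau_I < \infty\}$ the measure $\mu$ almost surely charges the open interval $I$. The natural strategy is to apply the strong Domain Markov property at the stopping time $\tau_I$, which on $\{\tau_I < \infty\}$ has $\gamma(\tau_I)\in I \subset \Rplus$. At such a time the hull $K_{\tau_I}$ has swallowed a closed subinterval of $\Rplus$ containing $0$ but, since $\gamma(\tau_I)\in I$ and $I$ is open, there is still a small open sub-interval $(\,\gamma(\tau_I),\gamma(\tau_I)+\delta\,)\subset I$ of unswallowed points just to the right of the tip (for a random $\delta>0$). Under $h_{\tau_I}$ this gets mapped to an interval of the form $(0,\delta')$ in the image half-plane, and by the strong Domain Markov property the conditional law of $\mu_{d,h_{\tau_I}}$ restricted there is that of the original measure $\mu^*$ restricted to $(0,\delta')$.

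\medskip

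\noindent First I would reduce to a statement about the image measure: it suffices to show that $\mu_{d,h_{\tau_I}}$ assigns positive mass to $(0,\delta')$ for every $\delta'>0$, almost surely on $\{\tau_I<\infty\}$, because the $d$-dimensional covariant transform only rescales mass by the positive factor $|\,(h_{\tau_I}^{-1})'\,|^d$ on the relevant boundary segment, so $\mu_{d,h_{\tau_I}}\big((0,\delta')\big)>0$ if and only if $\mu\big((\gamma(\tau_I),\gamma(\tau_I)+\delta)\big)>0$, and the latter implies $\mu(I)>0$. Then, conditionally on $\F_{\tau_I}$ and on $\{\tau_I<\infty\}$, the measure $\mu^*$ is an independent copy of $\mu$, so the desired statement becomes: for an independent copy $\mu^*$ of $\mu$, one has $\mu^*\big((0,\delta')\big)>0$ for all $\delta'>0$ almost surely. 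But this is exactly the content of Lemma \ref{chargesZero} (with $J_\epsilon=(0,\epsilon)$), so we are done once the reduction is carefully set up.

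\medskip

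\noindent The main obstacle is the bookkeeping around the strong Domain Markov property when $I$ does not contain its own endpoints and when $K_{\tau_I}$ may contain part of $I$. One must argue that $h_{\tau_I}(\Rplus\setminus K_{\tau_I})$ genuinely contains an interval $(0,\delta')$ with $\delta'>0$ on $\{\tau_I<\infty\}$: this follows because $\gamma(\tau_I)\in I$ with $I$ open means the unswallowed portion of $I$ immediately to the right of the tip is a nonempty open interval, and $h_{\tau_I}$ is a homeomorphism on $\overline{\H}\setminus K_{\tau_I}$ sending $\gamma(\tau_I)$ to $0$ and preserving the right-hand order on $\Rplus$ (Proposition \ref{BesselProp}(d)). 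A secondary technical point is that the strong Domain Markov Corollary is stated for deterministic $\F_t$-stopping times $T$; one should check that $\tau_I$ is such a stopping time (it is, being a hitting time of the open set $I$ by the continuous curve) and that conditioning on the event $\{\tau_I<\infty\}$, which has positive probability whenever $\prob{\mum{I}>0\mid \tau_I<\infty}$ is to be meaningful, does not disturb the independence of $\mu^*$ from $\F_{\tau_I}$.

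\medskip

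\noindent In summary, the steps in order are: (1) note $\tau_I$ is an $\F_t$-stopping time and restrict attention to $\{\tau_I<\infty\}$, on which $\gamma(\tau_I)\in I$; (2) identify a random open interval $(\gamma(\tau_I),\gamma(\tau_I)+\delta)\subset I\setminus K_{\tau_I}$ and observe $h_{\tau_I}$ maps it to some $(0,\delta')$ with $\delta'>0$; (3) invoke the strong Domain Markov property to identify the $\F_{\tau_I}$-conditional law of $\mu_{d,h_{\tau_I}}$ restricted to $(0,\delta')$ with that of an independent copy $\mu^*$ of $\mu$ restricted to $(0,\delta')$; (4) apply Lemma \ref{chargesZero} to $\mu^*$ to conclude $\mu^*\big((0,\delta')\big)>0$ almost surely; (5) transport this back through the covariant transform (a positive rescaling on the boundary segment) to get $\mu\big((\gamma(\tau_I),\gamma(\tau_I)+\delta)\big)>0$, hence $\mu(I)>0$, almost surely on $\{\tau_I<\infty\}$.
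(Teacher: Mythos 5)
Your proposal follows essentially the same route as the paper: apply the Domain Markov property at the stopping time $\tau_I$, use Lemma \ref{chargesZero} to deduce that the image measure charges $(0,\epsilon)$ for all $\epsilon>0$, and then pull this back to $\mu(I)>0$. The only differences are cosmetic. The paper invokes Theorem \ref{futureDecompTheorem} rather than the law-level Domain Markov corollary, which gives the \emph{pathwise} almost-sure identity $\dtrans{h_{\tau_I}}{\mu}=\mu^*$ on $\{\tau_I<\infty\}$; this lets one apply Lemma \ref{chargesZero} directly to $\mu^*$ (the measure built from the shifted Brownian motion, which is itself a $P$-Brownian motion on $(\Omega,\F,P)$) without an intermediate argument about conditional distributions. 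The paper also skips your random $\delta$ and ``iff'' step: it simply takes the full unswallowed piece $h_{\tau_I}(I\backslash K_{\tau_I})=(0,h_{\tau_I}(x_2))$ and uses the one-sided bound $|h_{\tau_I}'|\le 1$ from Proposition \ref{BesselProp}(h) to get $\dtrans{h_{\tau_I}}{\mu}(h_{\tau_I}(I\backslash K_{\tau_I}))\le\mu(I)$, which is all that is needed. Your version is correct but slightly longer-winded in these two places.
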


\begin{proof}
Write $I = (x_1, x_2)$ with $0 < x_1 < x_2 < \infty$. First note that $\prob{\tau_I < \infty} > 0$ by equation \eqref{OneIntHittingProb}, so the conditioning is well defined. Let $\mu^*$ be the random measure associated to the Brownian motion $\{B_{\tau_I,s}, \F_{\tau_I, s}; s \geq 0 \}$. Since $\gamma(\tau_I) \in I$ on $\{ \tau_I < \infty \}$, Theorem \ref{futureDecompTheorem} says that $\dtrans{h_{\tau_I}}{\mu} = \mu^*$ for almost all $\omega \in \{ \tau_I < \infty \}$. Lemma \ref{chargesZero} says that for almost all $\omega \in \{ \tau_I < \infty \}$ we have $\mu^*((0, \epsilon)) > 0$ for every $\epsilon > 0$; in particular $\mu^*((0, h_{\tau_I}(x_2))) = \mu^*(h_{\tau_I}(I \backslash K_{\tau_I})) > 0$ since $h_{\tau_I}(x_2) > 0$. Thus
\begin{align*}
\prob{ \left. \dtrans{h_{\tau_I}}{\mu} \left( h_{\tau_I}(I \backslash K_{\tau_I}) \right) > 0 \, \right | \tau_I < \infty } = 1.
\end{align*}
But by definition of the $d$-dimensional covariant transform
\begin{align*}
\dtrans{h_{\tau_I}}{\mu} \left( h_{\tau_I}(I \backslash K_{\tau_I}) \right) &= \int_{I \backslash K_{\tau_I}} \leb{h_{\tau_I}'(x)}^d \, d\mu(x) \\
& \leq \int_{I \backslash K_{\tau_I}} \, d\mu(x) \\
& \leq \mum{I}.
\end{align*}
The first inequality follows by $\leb{h_{\tau_I}'(x)} \leq 1$ for all $x \in \Rplus \backslash K_{\tau_I}$.
\end{proof}

\begin{remark}
Proposition \ref{massToHit} proves one half of Theorem \ref{mainTheorem}(iv), namely that if the curve hits the open interval then $\mu$ assigns mass to it. On the event $\{ \tau_I = \infty \}$ where the curve misses $I$, by Corollary \ref{contToFirstHitting} the non-decreasing part of the Doob-Meyer decomposition for $X_t(I)$ is identically zero. Hence, by definition,
 $\mu$ assigns no mass to $I$ on this event.
\end{remark}

\begin{proposition}\label{atomFree}
With probability one $\mu$ is free of atoms.
\end{proposition}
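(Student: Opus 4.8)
The plan is to exploit the continuity of the non-decreasing part of the Doob–Meyer decomposition, already established in Proposition \ref{DMDecomp}, together with the identification $A_t(I)=\mum{I\cap K_t}$ (which follows from Proposition \ref{muRestricted} and the uniqueness of the Doob–Meyer decomposition, since $\mum{I\cap K_t}+X_t(I)$ is a martingale). Since $\expect{\mum{(0,n]}}=\int_0^n x^{-\beta}\,dx<\infty$ for every $n$, the measure $\mu$ is almost surely $\sigma$-finite on $\Rplus$, so its set of atoms is countable and it suffices to prove that, on a single event of full probability, $\mum{\{x_0\}}=0$ for \emph{every} $x_0>0$. The mechanism is that an atom would force a genuine jump in the (cadlag) process $t\mapsto\mum{I\cap K_t}$, contradicting its continuity.

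First I would fix an interval $I=(x_1,x_2]\in\mathcal{Q}$ and a point $x_0\in I$, and examine the process $t\mapsto\mum{I\cap K_t}$ near the swallowing time $T_{x_0}$, which is strictly positive (by continuity of $t\mapsto h_t(x_0)$) and almost surely finite (Proposition \ref{BesselProp}(c)). Using the identity $I\cap K_t=\{x\in I:T_x\le t\}$ and monotonicity of $T$ in $x$ (Proposition \ref{BesselProp}(e)), the sets $I\cap K_t$ increase as $t\uparrow T_{x_0}$ to $\{x\in I:T_x<T_{x_0}\}$, which does not contain $x_0$, whereas $I\cap K_{T_{x_0}}=\{x\in I:T_x\le T_{x_0}\}$ does. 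Continuity from below of $\mu$ (applied along rationals $t\uparrow T_{x_0}$, which are cofinal) then gives that the left limit of $\mum{I\cap K_t}$ at $T_{x_0}$ equals $\mum{\{x\in I:T_x<T_{x_0}\}}$, and hence
\begin{align*}
\mum{I\cap K_{T_{x_0}}}-\lim_{t\uparrow T_{x_0}}\mum{I\cap K_t}\ \ge\ \mum{\{x_0\}}.
\end{align*}
So if $\mum{\{x_0\}}>0$ the process $t\mapsto\mum{I\cap K_t}=A_t(I)$ is discontinuous at $T_{x_0}$.

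Next I would conclude. By Proposition \ref{DMDecomp}, for each fixed interval $I$ the process $A_t(I)$ is almost surely continuous; intersecting over the countable family $\mathcal{Q}$ yields a single event $\Omega_0$ of full probability on which $t\mapsto\mum{I\cap K_t}$ is continuous for all $I\in\mathcal{Q}$ simultaneously. On $\Omega_0$, given any candidate $x_0>0$ choose rationals $p<x_0<q$ and apply the previous step to $I=(p,q]\in\mathcal{Q}$: an atom at $x_0$ would produce a discontinuity of $A_t(I)$ at $T_{x_0}$, a contradiction. Hence $\mum{\{x_0\}}=0$ for every $x_0>0$ on $\Omega_0$, which is the claim.

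The one step that needs to be written carefully — and which I would regard as the only real (though modest) obstacle — is the jump computation: one must verify that $\mum{I\cap K_t}$ is indistinguishable from the cadlag non-decreasing part $A_t(I)$, so that ``continuous'' applies to it, and that the left limit at $T_{x_0}$ is exactly $\mum{\{x\in I:T_x<T_{x_0}\}}$ rather than something larger (this is where the measure-theoretic continuity from below and the identity $I\cap K_t=\{x\in I:T_x\le t\}$ are used). No estimate beyond the regularity/class-$\mathcal{D}$ analysis of Section \ref{ConstructSection} is required; in particular, Corollary \ref{SmallMass} is not needed for this proposition.
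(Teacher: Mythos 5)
Your proof is correct and follows essentially the same approach as the paper's: an atom at $x_0$ would force a jump in the non-decreasing Doob--Meyer part $\mum{I\cap K_t}$ at $T_{x_0}$ for an interval $I$ from a fixed countable family containing $x_0$ (you use $\mathcal{Q}$, the paper uses the dyadic cover $I_j=[2^j,2^{j+1}]$), contradicting the continuity of $A_t(I)$ from Proposition \ref{DMDecomp}. You spell out the jump computation and the identification $\mum{I\cap K_t}=A_t(I)$ more carefully than the paper's very terse proof, which is fine; the $\sigma$-finiteness/countable-atoms remark is harmless but unnecessary.
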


\begin{proof}
Let
\begin{align*}
E := \left \{ \omega \in \Omega : \mum{x}(\omega) > 0 \textnormal{ for some } x \in \R_+ \right \}
\end{align*}
and suppose that $\prob{E} > 0$. Let $I_j = [2^j, 2^{j+1}]$ for $j \in \Z$. Then for each $\omega \in E$ there is a $j \in \Z$ such that the process $\mum{I_j \cap K_t}$ jumps at the time $T_x$ that $x$ joins the SLE hull. But this contradicts that for any fixed interval $I$ the process $\mum{I \cap K_t}$ is almost surely continuous in $t$.
\end{proof}

\subsection{The Boundary Measure on Smooth Domains}

In this section we define the boundary measure on an arbitrary simply connected domain $D$ with smooth boundary. Since SLE can be defined on $D$ by conformally mapping all the curves from $\H$ into $D$, we expect that we can transform the boundary measure from $\R$ to $\partial D$ in a similar way. The natural transformation to use is the $d$-dimensional covariant transformation; the last proposition shows why.

\begin{definition}\label{domainDef}
Let $D$ be a simply connected domain (other than $\C$ itself) and $w_1, w_2 \in \partial D$ be two distinct boundary points. Let $\phi : \H \to D$ be any conformal map taking $\H$ onto $D$ such that $\phi(0) = w_1, \phi(\infty) = w_2$, and assume that $\partial D$ is smooth enough that $\phi'$ extends continuously to all of $\R_+$. Let $\partial D_+$ be the boundary arc from $w_1$ to $w_2$ that is the image of $\Rplus$ (i.e. such that the interior of $D$ is to the left of $\partial D_+$). Given a Brownian motion $\{B_t; \Ft \}$, let $\mu$ be the corresponding random measure of Theorem \ref{decompTheorem}. Then we define the boundary measure (on $\partial D_+$) for the triple $(D, w_1, w_2)$ to be $\dtrans{\phi}{\mu}$, i.e. the $d$-dimensional covariant transform of $\mu$ by $\phi$.
\end{definition}

\begin{remark}
Note we are defining a boundary measure for $(D, w_1, w_2)$ as a random measure on the same probability space $(\Omega, \F, P)$ simply by mapping a random measure for $(\H, 0, \infty)$. In this sense we should think of $\mu_{d,\phi}$ as the random measure associated to the curves (and hull)
\begin{align*}
\gamma^{\phi}(t) = \phi(\gamma(t)), \quad K_t^{\phi} = \phi(K_t).
\end{align*}
Clearly $\gamma^\phi$ is a curve in $D$ going from $w_1$ to $w_2$, and by conformal invariance it has the law (under $P$) of SLE in $D$ from $w_1$ to $w_2$. We do it this way so that we are implicitly working with the same Brownian motions and filtrations, which we have already assumed to have enough nice properties to apply the Doob-Meyer decomposition.
\end{remark}

\begin{remark}
The above definition is somewhat ambiguous in that for a given triple $(D, w_1, w_2)$ there is no unique choice of the map $\phi$. Indeed, if $\phi$ satisfies the conditions of Definition \ref{domainDef} then so does $\tilde{\phi}(z) = \phi(rz)$ for any $r > 0$. However the scaling rule of Proposition \ref{muScaling} implies that $\dtrans{\phi}{\mu}$ and $\dtrans{\tilde{\phi}}{\mu}$ have the same law. To see this it is enough to consider $D = \H$, $\phi(z) = z$, and $\tilde{\phi}(z) = rz$. Then $\dtrans{\phi}{\mu} = \mu$ and
\begin{align*}
\dtrans{\tilde{\phi}}{\mu}(rI) = \int_I r^d \, d\mu(x) = r^d \mum{I} \equiv \mum{rI},
\end{align*}
with the last equality in law following from the scaling rule. Since this holds for all intervals $I \subset \Rplus$ we get $\dtrans{\tilde{\phi}}{\mu} \equiv \dtrans{\phi}{\mu}$.
\end{remark}

\begin{remark}
Since we are assuming that $\phi'$ extends continuously to all of $\R_+$, the $d$-dimensional covariant transform $\dtrans{\phi}{\mu}$ is well defined on all of $\partial D_+$. Definition \ref{transformDefn} also handles the case that $\phi'$ extends continuously only to some intervals of $\R_+$, but for the sake of exposition we decided not to use that in this section.
\end{remark}

We conclude with a characterization of $\mu_{d, \phi}$ that is the analogue of Theorem \ref{decompTheorem}. To do this we need to find the local martingale $M_t^{\phi}$ that corresponds to $D$. The ideas is that for a point $w \in \partial D_+$, the local martingale $M_t^{\phi}(w)$ should describe the conditional probability that $w$ is hit by the curve, given $\gamma^{\phi}[0,t]$. Let $S \subset \partial D_+$ be a boundary segment containing $w$. Note we may write $S = \phi(I)$ for some interval $I \subset \Rplus$ and $w = \phi(x)$ for some $x \in I$. Then
\begin{align*}
\lim_{S \downarrow \{w\}} \frac{\prob{\gamma^{\phi} \cap S \neq \emptyset \left| \gamma^{\phi}[0,t] \right. }}{\leb{S}^{\beta}} = \lim_{I \downarrow \{x\}} \frac{\prob{\gamma \cap I \neq \emptyset \left| \gamma[0,t] \right.}}{\leb{I}^{\beta}} \frac{\leb{I}^{\beta}}{\leb{S}^{\beta}} = M_t(x) \leb{\phi'(x)}^{-\beta}
\end{align*}
The first equality is by conformal invariance and the second is by equation \eqref{MtAsCondProb}.
Hence for $w \in S$ define
\begin{align*}
M_t^{\phi}(w) := \leb{\phi'(\phi^{-1}(w))}^{-\beta} M_t(\phi^{-1}(w)).
\end{align*}
Since $M_t^{\phi}(w)$ is just a rescaling of $M_t(\phi^{-1}(w))$ by a fixed constant, it follows that $M_t^{\phi}$ inherits most of the properties of $M_t$ from Section \ref{MtSection}. In particular it is a positive local martingale.

\begin{proposition}
$\mu_{d, \phi}$ is the unique random measure on $\partial D_+$ such that $\mu_{d, \phi}(\cdot \cap K_t^{\phi})$ is predictable and
\begin{align*}
\indicate{w \in K_t^{\phi}}d\mu_{d, \phi}(w) + M_t^{\phi}(w) \, dl(w)
\end{align*}
is a martingale with respect to $P$ and $\Ft$. Here $dl(w) = \leb{\phi'(\phi^{-1}(w))} dx$ is the length element on $S$.
\end{proposition}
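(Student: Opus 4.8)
The plan is to read the already-established characterization in Corollary \ref{decompLemma} (equivalently Theorem \ref{decompTheorem}) in the coordinate $w=\phi(x)$, the point being that the $d$-dimensional covariant transform is precisely the rule that makes the exponents match: since $d=1-\beta$, the factor $|\phi'|^{-\beta}$ hidden in $M_t^\phi$ and the factor $|\phi'|$ in the arc-length element $dl$ multiply to the factor $|\phi'|^d$ appearing in $\mu_{d,\phi}=\dtrans{\phi}{\mu}$.

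First I would fix an interval $I=(x_1,x_2]\subset\Rplus$ with $0<x_1<x_2<\infty$ and put $S=\phi(I)\subset\partial D_+$. Because $\phi'$ extends continuously to $\overline I$, the function $f(x):=\leb{\phi'(x)}^d \indicate{x\in I}$ is bounded, non-negative and measurable, so Corollary \ref{decompLemma} applies to it. Under the change of variables $w=\phi(x)$ we have $\indicate{w\in K_t^{\phi}}=\indicate{\phi(x)\in\phi(K_t)}=\indicate{x\in K_t}$; by Definition \ref{transformDefn}, $d\mu_{d,\phi}(w)=\leb{\phi'(x)}^d\,d\mu(x)=f(x)\,d\mu(x)$ on $S$; and, using the definition of $M_t^{\phi}$ and $dl(w)=\leb{\phi'(x)}\,dx$,
\begin{align*}
M_t^{\phi}(w)\,dl(w)=\leb{\phi'(x)}^{-\beta}M_t(x)\cdot\leb{\phi'(x)}\,dx=\leb{\phi'(x)}^{1-\beta}M_t(x)\,dx=f(x)M_t(x)\,dx.
\end{align*}
Hence, viewed over $S$, the process $\indicate{w\in K_t^{\phi}}\,d\mu_{d,\phi}(w)+M_t^{\phi}(w)\,dl(w)$ is literally $\indicate{x\in K_t}f(x)\,d\mu(x)+f(x)M_t(x)\,dx$, which is a $P$-martingale by Corollary \ref{decompLemma}; and predictability of $\mu_{d,\phi}(\cdot\cap K_t^{\phi})$ follows from predictability of the restriction of $f(x)\,d\mu(x)$ to $K_t$ together with $K_t^{\phi}=\phi(K_t)$ and the fact that $\phi$ is deterministic. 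Letting $I$ range over a countable exhausting family of such intervals gives both asserted properties.

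For uniqueness, suppose $\nu$ is another random measure on $\partial D_+$ with $\nu(\cdot\cap K_t^{\phi})$ predictable and $\indicate{w\in K_t^{\phi}}\,d\nu(w)+M_t^{\phi}(w)\,dl(w)$ a martingale. Define the pullback $\rho$ on $\Rplus$ by $\rho(A):=\nu(\phi(A))$. Reversing the computation above shows that, for each truncated test function $f=\leb{\phi'}^d\indicate{I}$, the restriction of $\rho$ to $K_t$ is predictable and $\indicate{x\in K_t}\,d\rho(x)+f(x)M_t(x)\,dx$ is a martingale. Since $\leb{\phi'}^d\ge 0$, the uniqueness clause of Corollary \ref{decompLemma} forces $d\rho(x)=\leb{\phi'(x)}^d\,d\mu(x)$ on each $I$, hence on all of $\Rplus$; pushing forward by $\phi$ gives $\nu(\phi(A))=\int_A\leb{\phi'(x)}^d\,d\mu(x)=\dtrans{\phi}{\mu}(\phi(A))$, i.e. $\nu=\mu_{d,\phi}$.

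There is no deep obstacle here; the statement is really Corollary \ref{decompLemma} read in the variable $w=\phi(x)$. The only points that need care are bookkeeping ones: truncating to compact subintervals of $\Rplus$ so that $\phi'$ stays bounded and Corollary \ref{decompLemma} applies verbatim (recall Definition \ref{domainDef} only assumes $\phi'$ extends continuously to $\Rplus$, not necessarily at $0$ or $\infty$); verifying that the deterministic map $\phi$ transports the predictability condition correctly; and tracking the identity $d=1-\beta$ so that the $\leb{\phi'}^{-\beta}$ in $M_t^{\phi}$ and the $\leb{\phi'}$ in $dl$ combine into the $\leb{\phi'}^d$ of the covariant transform.
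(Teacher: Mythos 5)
Your proof is correct and follows essentially the same route as the paper: compute $M_t^{\phi}(w)\,dl(w)=\leb{\phi'(x)}^d M_t(x)\,dx$, recognize $d\mu_{d,\phi}(w)=\leb{\phi'(x)}^d d\mu(x)$, and invoke Corollary \ref{decompLemma} with $f=\leb{\phi'}^d$ for both existence of the martingale property and uniqueness. The one place you are actually more careful than the paper is the truncation to compact $I\subset\Rplus$ so that $f=\leb{\phi'}^d\indicate{I}$ is genuinely bounded (as Corollary \ref{decompLemma} requires), since Definition \ref{domainDef} does not guarantee boundedness of $\phi'$ near $0$ or $\infty$; the paper glosses over this. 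That is a small tightening of the same argument, not a different one.
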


\begin{proof}
By definition we have $M_t^{\phi}(w) \, dl(w) = \leb{\phi'(x)}^{-\beta} M_t(x) \leb{\phi'(x)} \, dx = \leb{\phi'(x)}^d M_t(x) \, dx$, where $w = \phi(x)$. By Corollary \ref{decompLemma}, $\leb{\phi'(x)}^d d \mu(x)$ is the unique random measure whose restriction to $K_t$ is predictables and that yields a martingale when added to $\leb{\phi'(x)}^d M_t(x) \, dx$. But $\leb{\phi'(x)}^d d \mu(x)$ is precisely $d \dtrans{\phi}{\mu}(w)$, by definition of the $d$-dimensional covariant transform.
\end{proof}

\section{Approximations of the Measure \label{approxSection}}

The construction of the measure in Section \ref{ConstructSection} was very much inspired by \cite{schramm_zhou:dimension}, in which Schramm and Zhou find a lower bound on the Hausdorff dimension of $\hitpoints_+$.  They used the local martingales $M_t(x)$ to construct random measures $\ep{\mu}$ on the sets $\ep{C}$, and then use a standard technique to take a $\gamma$-dependent subsequential limit of these $\ep{\mu}$ measures, on an event of positive probability, to obtain a measure on $C$. In Section \ref{prev_appearance} we briefly describe the construction that Schramm and Zhou use. In Section \ref{epSection} we will show how Theorem \ref{DellMTheorem} can be used to construct a better limiting measure for the $\ep{\mu}$, although it turns out to be the same as the one we constructed in Section \ref{ConstructSection}. In Section \ref{FrostmanSection} we use the $\ep{\mu}$ measures to show that the boundary measure $\mu$ is actually a Frostman measure on $\hitpoints_+$, and Section \ref{conformalMinkSection} contains a brief discussion on the (conjectured) relationship between the boundary measure and the Minkowski measure.

\subsection{The Schramm-Zhou Measures \label{prev_appearance}}

Schramm and Zhou begin by defining the random measures $\ep{\mu}$ on $\Rplus$ by
\begin{align*}
d \ep{\mu}(x) := \epsilon^{-\beta} \indicate{x \in C^{\epsilon}} \, dx.
\end{align*}
Given $\delta > 0$ and an interval $I \subset \Rplus$, they use the bound of Corollary \ref{IndicatorBound} to prove that there exists a constant $R_{\delta, I} < \infty$ such that
\begin{align*}
\sup_{\epsilon > 0} \expect{\Henergy_{d - \delta} \left( \ep{\mu}, I \right)} \leq R_{\delta, I},
\end{align*}
where $\Henergy_{\alpha}(\nu, I)$ is the $\alpha$-\textit{energy} of a measure $\nu$ (restricted to $I$) defined by
\begin{align*}
\Henergy_{\alpha} \left( \nu, I \right) := \int_I \int_I \frac{d \nu (x) d \nu (y)}{|x-y|^{\alpha}}.
\end{align*}
From this, they prove the existence of a $\lambda > 0$ and an event $E$ with $\prob{E} > \lambda$ and the following additional property: for every $\omega \in E$, there is a subsequence $\epsilon_j(\omega)$ tending to zero such that $\mu^{\epsilon_j(\omega)}(\cdot)(\omega)$ converges weakly to a measure $\mu^o(\cdot)(\omega)$ supported on $C(\omega)$, and that $\mu^o(\cdot)(\omega)$ further satisfies
\begin{align*}
\mu^o(I)(\omega) > \lambda, \, \Henergy_{d - \delta} \left( \mu^o(\cdot)(\omega), I \right) < 1/\lambda.
\end{align*}
The limiting measure $\mu^o(\cdot)(\omega)$ for $\omega \in E$ is enough to get a lower bound on the Hausdorff dimension of $C(\omega)$, but is hardly a satisfactory candidate as a natural measure on $\hitpoints_+$. Indeed, it only exists on an event of positive probability, and even then only as a subsequential limit that depends on the $\omega$ in question. Using Theorem \ref{DellMTheorem} we prove the much stronger result that the sequence of random measures $\mu^{\epsilon_j}$ converges weakly, with probability one, to the random measure $\mu$ along some fixed subsequence $\epsilon_j$ that tends to zero. This is the subject of the next section.

\subsection{Convergence of the Schramm-Zhou Measures \label{epSection}}

Like $\mu$, the $\ep{\mu}$ measures appear naturally as the non-decreasing part of the Doob-Meyer decomposition of a supermartingale. Fix an interval $I \subset \Rplus$, and for a fixed $\epsilon > 0$ consider the process
\begin{align*}
\epsilon^{-\beta} \left | I \cap \te{C} \right|.
\end{align*}
As $t \to \infty$ this clearly increases to $\ep{\mu}(I)$. Using that $\te{M}(x) = \epsilon^{-\beta}$ if $x \in \te{C}$, we may also write
\begin{align*}
\epsilon^{-\beta} \left | I \cap \te{C} \right| = \int_I \te{M}(x) \indicate{x \in \te{C}} \, dx.
\end{align*}
Observe that this term shows up in the decomposition
\begin{align}\label{DiscreteDecomp}
\int_I \te{M}(x) \indicate{x \not \in \te{C}} \, dx = \int_I \te{M}(x) \, dx - \int_I \te{M}(x) \indicate{x \in \te{C}} \, dx.
\end{align}
Equation \eqref{DiscreteDecomp} is clearly the Doob-Meyer decomposition of the supermartingale
\begin{align*}
\te{X}(I) := \int_I \te{M}(x) \indicate{x \not \in \te{C}},
\end{align*}
with $\epsilon^{-\beta} \left| I \cap \te{C} \right|$ being the non-decreasing part. In \eqref{DiscreteDecomp}, it is not immediately clear that either one of the two terms on the right hand side actually converges as $\epsilon \downarrow 0$, but fortuitously the $\te{X}(I)$ term does. Indeed, for a fixed $t$ note that
\begin{align*}
\te{M}(x) \indicate{x \not \in \te{C}} = M_t(x) \indicate{x \not \in \te{C}} \leq M_t(x).
\end{align*}
for all $\epsilon > 0$. Since the sets $\te{C}$ decrease with $\epsilon$ to $C_t$, we have that $M_t(x) \indicate{x \not \in \te{C}}$ increases pointwise to $M_t(x) \indicate{x \not \in C_t}$ as $\epsilon \downarrow 0$. But
\begin{align*}
M_t(x) - M_t(x) \indicate{x \not \in C_t} = M_t(x) \indicate{x \in C_t},
\end{align*}
and
\begin{align*}
\int_I M_t(x) \indicate{x \in C_t} \, dx = 0,
\end{align*}
since $\haussdim{C_t} \leq \haussdim{C} < 1$. Therefore
\begin{align*}
\int_I M_t(x) \, dx = \int_I M_t(x) \indicate{x \not \in C_t} \, dx.
\end{align*}
Consequently, by monotone convergence,
\begin{align*}
\lim_{\epsilon \downarrow 0} \te{X}(I) = \lim_{\epsilon \downarrow 0} \int_I M_t(x) \indicate{x \not \in \te{C}} \, dx = \int_I M_t(x) \indicate{x \not \in C_t} \, dx = \int_I M_t(x) \, dx = X_t(I).
\end{align*}
Hence $\te{X}(I)$ almost surely increases pointwise to $X_t(I)$ as $\epsilon \downarrow 0$. In Section \ref{ConstructSection} we showed that $X_t(I)$ is of \textit{class $\mathcal{D}$} and regular, so by Theorem \ref{DellMTheorem} we have that the increasing part of $\te{X}(I)$ converges in $L^1$ to the increasing part of $X_t(I)$ as $\epsilon \downarrow 0$. This gives the following

\begin{proposition}\label{DellMApplication}
Fix an interval $I \subset \Rplus$. Then for all stopping times $T$,
\begin{align*}
\lim_{\epsilon \downarrow 0} \expect{\leb{ \epsilon^{-\beta} \leb{I \cap C_{T}^{\epsilon}} - A_T(I) }} = 0
\end{align*}
\end{proposition}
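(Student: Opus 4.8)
The plan is to read Proposition~\ref{DellMApplication} as a direct instance of Theorem~\ref{DellMTheorem}, the only real work being to recast the $\epsilon$-indexed family $\te{X}(I)$ as a monotone \emph{sequence} of supermartingales converging to $X_t(I)$ and to identify the two non-decreasing processes involved. First I would fix an arbitrary sequence $\epsilon_n \downarrow 0$ and set $Z^n_t := X_t^{\epsilon_n}(I) = \int_I M_t^{\epsilon_n}(x)\indicate{x \notin C_t^{\epsilon_n}}\,dx$. Because $\te{C}$ decreases as $\epsilon$ decreases, the indicators $\indicate{x \notin \te{C}}$ increase in $n$, so $\{Z^n\}_{n\ge 1}$ is a non-decreasing sequence of non-negative processes; each $Z^n$ is a supermartingale by the decomposition \eqref{DiscreteDecomp} and is bounded above by $\epsilon_n^{-\beta}\leb{I}$, hence is of class $\mathcal{D}$ and admits a Doob--Meyer decomposition whose non-decreasing part is $A^n_t := \epsilon_n^{-\beta}\leb{I \cap C_t^{\epsilon_n}}$.

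Next I would verify the hypotheses on the limit. The discussion preceding the proposition already shows $Z^n_t \uparrow X_t(I)$ pointwise, almost surely: this uses $\te{C}\downarrow C_t$, the fact that $\haussdim C_t < 1$ forces $\int_I M_t(x)\indicate{x \in C_t}\,dx = 0$, and monotone convergence. The limit $X_t(I)$ is of class $\mathcal{D}$ by Proposition~\ref{IntegralBound}, and it is regular: it is continuous by Proposition~\ref{XContinuous}, so for any non-decreasing sequence of bounded stopping times $\tau_k \uparrow \tau$ one has $X_{\tau_k}(I) \to X_\tau(I)$ almost surely by path-continuity, and the class $\mathcal{D}$ property upgrades this to $\expect{X_{\tau_k}(I)} \to \expect{X_\tau(I)}$. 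Its Doob--Meyer non-decreasing part is $A_t(I)$ by Proposition~\ref{DMDecomp}. Theorem~\ref{DellMTheorem} now applies and gives, for every stopping time $T$,
\begin{align*}
\lim_{n\to\infty}\expect{\Bigl| \epsilon_n^{-\beta}\leb{I \cap C_T^{\epsilon_n}} - A_T(I) \Bigr|} = 0 .
\end{align*}
For each fixed $T$, the quantity $\expect{\bigl|\epsilon^{-\beta}\leb{I \cap C_T^{\epsilon}} - A_T(I)\bigr|}$ depends only on $\epsilon$, and the sequence $\epsilon_n\downarrow 0$ was arbitrary, so this is equivalent to $\lim_{\epsilon \downarrow 0}\expect{\bigl|\epsilon^{-\beta}\leb{I \cap C_T^{\epsilon}} - A_T(I)\bigr|} = 0$, which is the assertion.

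The one point that takes a moment of care --- and the nearest thing to an obstacle --- is the claim that the increasing term in \eqref{DiscreteDecomp} really is the \emph{predictable} non-decreasing part of the Doob--Meyer decomposition of $\te{X}(I)$, so that Theorem~\ref{DellMTheorem} may legitimately be fed $A^n_t = \epsilon_n^{-\beta}\leb{I\cap C_t^{\epsilon_n}}$. I would justify this by noting that $\int_I \te{M}(x)\,dx$ is a bounded martingale for the Brownian filtration, hence continuous, so \eqref{DiscreteDecomp} writes $\te{X}(I)$ as a continuous martingale minus the right-continuous, adapted, non-decreasing process $\epsilon^{-\beta}\leb{I \cap \te{C}}$; since the optional and predictable $\sigma$-fields coincide for the Brownian filtration, this last process is predictable, and uniqueness in Theorem~\ref{DMTheorem} identifies it as the non-decreasing part. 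Everything else is either a citation (Theorems~\ref{DMTheorem} and \ref{DellMTheorem}, Propositions~\ref{IntegralBound}, \ref{XContinuous}, \ref{DMDecomp}) or routine bookkeeping, so I do not anticipate any further difficulty.
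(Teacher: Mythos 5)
Your proposal is correct and takes essentially the same route as the paper: identify $\epsilon^{-\beta}|I\cap\te{C}|$ as the Doob--Meyer increasing part of $\te{X}(I)$ via \eqref{DiscreteDecomp}, show $\te{X}(I)\uparrow X_t(I)$ pointwise as $\epsilon\downarrow 0$, verify class $\mathcal{D}$ and regularity of the limit from Propositions~\ref{IntegralBound} and~\ref{XContinuous}, and invoke Theorem~\ref{DellMTheorem}. Your explicit justification that the increasing term in \eqref{DiscreteDecomp} is predictable (adapted, right-continuous, and optional $=$ predictable for the Brownian filtration), and that continuity plus class $\mathcal{D}$ upgrades almost sure convergence along stopping times to convergence of expectations, fills in details the paper leaves implicit but adds nothing structurally different.
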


From this we immediately deduce the following:

\begin{proposition}\label{weakConvergence}
There exists a fixed sequence $\epsilon_j \downarrow 0$ such that $\mu^{\epsilon_j}$ converges weakly to the random measure $\mu$, with probability one.
\end{proposition}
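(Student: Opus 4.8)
The plan is to deduce the weak convergence from an $L^1$ (hence in-probability) convergence statement for the masses of rational intervals, followed by a diagonal extraction and a soft ``no atoms'' upgrade. First I would specialize Proposition~\ref{DellMApplication} to the stopping time $T = T_I$, which is almost surely finite by Proposition~\ref{BesselProp}(c). For $x \in I$ the running maximum $\sup_{0 \le s \le t} M_s(x)$ no longer increases once $t \ge T_x$, because $M_s(x) \to 0$ as $s \uparrow T_x$ by Lemma~\ref{MEndsAtZero} and all the quantities involved are positive; hence $I \cap C_{T_I}^{\epsilon} = I \cap C^{\epsilon}$, so that $\epsilon^{-\beta}\leb{I \cap C_{T_I}^{\epsilon}} = \mu^{\epsilon}(I)$, while $A_{T_I}(I) = \mum{I}$ by the definition of $\mu$. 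Thus Proposition~\ref{DellMApplication} reads $\expect{\leb{\mu^{\epsilon}(I) - \mum{I}}} \to 0$ as $\epsilon \downarrow 0$ for every interval $I \subset \Rplus$; in particular $\mu^{\epsilon}(I) \to \mum{I}$ in probability.

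Next I would run the usual diagonal argument over the countable collection $\mathcal{Q}$ of intervals with positive rational endpoints. Enumerating $\mathcal{Q} = \{I_1, I_2, \dots\}$, convergence in probability lets me extract a sequence $\epsilon^{(1)}_j \downarrow 0$ along which $\mu^{\epsilon^{(1)}_j}(I_1) \to \mum{I_1}$ almost surely, then a subsequence $\epsilon^{(2)}_j$ of it handling $I_2$ as well, and so on; the diagonal sequence $\epsilon_j := \epsilon^{(j)}_j$ then satisfies $\mu^{\epsilon_j}(I_k) \to \mum{I_k}$ almost surely for every $k$. Intersecting these countably many almost sure events with the full-probability event that $\mu$ is atom-free (Proposition~\ref{atomFree}) produces an event $\Omega_0$ with $\prob{\Omega_0} = 1$, on which $\mu^{\epsilon_j}(I) \to \mum{I}$ for all $I \in \mathcal{Q}$ simultaneously and $\mu$ has no atoms.

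Finally, on $\Omega_0$ I would upgrade this interval-wise convergence to weak (vague) convergence of measures on $\Rplus$. Fix $R > 0$ and consider the distribution functions $F_j(x) := \mu^{\epsilon_j}\big((0,x]\big)$ and $F(x) := \mum{(0,x]}$ on $(0,R]$: by construction $F_j(q) \to F(q)$ at every rational $q \le R$, the total masses $F_j(R)$ converge to the finite value $F(R)$ (so the family $\{\mu^{\epsilon_j}\}$ is tight on $(0,R]$), and $F$ is continuous because $\mu$ has no atoms. A P\'olya-type argument---a monotone sequence converging on a dense set to a continuous limit converges everywhere, uniformly on compacts---then gives $F_j(x) \to F(x)$ for all $x \in (0,R]$, which is exactly the classical criterion for weak convergence of the finite measures obtained by restricting $\mu^{\epsilon_j}$ and $\mu$ to $(0,R]$ (every point being a continuity point of $F$). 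Since $R$ is arbitrary, for any $f \in C_c(\Rplus)$ one picks $R$ with $\operatorname{supp} f \subset (0,R]$ and concludes $\int f\, d\mu^{\epsilon_j} \to \int f\, d\mu$, i.e.\ $\mu^{\epsilon_j} \to \mu$ weakly.

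I expect the genuinely delicate point to be this last upgrade: interval-wise convergence alone does not force weak convergence in general, and it is precisely the absence of atoms in the limit $\mu$---together with the tightness furnished by the $L^1$ bound---that makes the P\'olya argument applicable. Everything upstream of it, namely the identification of the $L^1$ limit through Proposition~\ref{DellMApplication} and the diagonal extraction, is routine.
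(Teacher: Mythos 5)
Your proposal is correct and follows essentially the same route as the paper: apply Proposition~\ref{DellMApplication} at $T = T_I$ to obtain $L^1$ convergence of $\mu^{\epsilon}(I)$ to $\mu(I)$ for each rational interval, then diagonalize over the countable family $\mathcal{Q}$ to extract a fixed subsequence along which convergence holds almost surely for all $I \in \mathcal{Q}$ simultaneously. The paper leaves the final upgrade from interval-wise convergence on $\mathcal{Q}$ to weak convergence implicit; you correctly identify that this step relies on the atom-freeness of $\mu$ (Proposition~\ref{atomFree}) and fill it in with the standard distribution-function argument.
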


\begin{proof}
It suffices to show that $\mu^{\epsilon_j}(I)$ converges almost surely to $\mu(I)$ for every $I \in \mathcal{Q}$. For any interval $I \subset \Rplus$ we clearly have
\begin{align*}
\epsilon^{-\beta} \leb{I \cap C_{T_I}^{\epsilon}} = \ep{\mu}(I).
\end{align*}
Hence it follows from Proposition \ref{DellMApplication} that $\ep{\mu}(I)$ converges in $L^1$ to $\mu(I)$. Since $\mathcal{Q}$ is countable, it follows from a diagonalization argument that there is a common, fixed subsequence $\epsilon_j$ tending to zero along which $\mu^{\epsilon_j}(I)$ converges almost surely to $\mu(I)$ for every interval $I \in \mathcal{Q}$.
\end{proof}

\subsection{The Measure as a Frostman Measure \label{FrostmanSection}}

Using the convergence of the last section, we prove in this section that $\mu$ is almost surely measure a Frostman measure on the set $C$. This result is not surprising since, as mentioned in Section \ref{prev_appearance}, a Frostman measure was already constructed in \cite{schramm_zhou:dimension} as the limit of the $\ep{\mu}$ measures (on some event of positive probability). We are able to prove the following:

\begin{proposition}\label{ConformalFrostman}
Let $I = (x_1, x_2]$ with $0 < x_1 < x_2 < \infty$. Then for every $\delta > 0$ there exists a constant $R_{\delta} > 0$ (also depending on $I$ and $\kappa$) such that the expected $(d-\delta)$-energy of $\mu$ restricted to $I$ is finite, i.e.
\begin{align*}
\expect{\int_I \int_I \frac{d \mu(x) d \mu(y)}{|x-y|^{d-\delta}}} \leq R_{\delta}.
\end{align*}
Consequently, $\mu$ is a Frostman measure on $C$.
\end{proposition}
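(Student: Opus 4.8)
The plan is to bound the expected energy of $\mu$ by the expected energy of the approximating measures $\ep{\mu}$, using the weak convergence of Proposition \ref{weakConvergence} together with a Fatou-type lower-semicontinuity argument for the energy functional, and then invoke the uniform energy bound that Schramm and Zhou obtained from Corollary \ref{IndicatorBound}. Concretely, first I would record the uniform bound: for fixed $\delta > 0$ and $I = (x_1, x_2]$,
\begin{align*}
\sup_{\epsilon > 0} \expect{\Henergy_{d-\delta}(\ep{\mu}, I)} = \sup_{\epsilon > 0} \epsilon^{-2\beta} \int_I \int_I \frac{\prob{x, y \in \ep{C}}}{|x-y|^{d-\delta}} \, dx \, dy \leq c \int_I \int_I \frac{x^{-\beta}(y-x)^{-\beta}}{|x-y|^{d-\delta}} \, dx \, dy =: R_{\delta},
\end{align*}
where the inequality is Corollary \ref{IndicatorBound} (note $C^{\epsilon}_t \subset C^\epsilon$, so in fact the indicator bound applies directly to $\ep{C}$), and the final double integral is finite because $\beta + (d - \delta) = 1 - \delta < 1$, so the singularity along the diagonal is integrable, and $x_1 > 0$ keeps $x^{-\beta}$ bounded on $I$.

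Next I would pass to the limit. Along the fixed subsequence $\epsilon_j \downarrow 0$ of Proposition \ref{weakConvergence}, $\mu^{\epsilon_j}$ converges weakly to $\mu$ almost surely; restricting everything to the fixed bounded interval $I$ (one can pass to $\overline{I}$ and note the endpoints carry no mass since $\mu$ is atomless by Proposition \ref{atomFree}), the product measures $\mu^{\epsilon_j} \times \mu^{\epsilon_j}$ converge weakly to $\mu \times \mu$ on $\overline{I} \times \overline{I}$. The kernel $(x,y) \mapsto |x-y|^{-(d-\delta)}$ is nonnegative and lower semicontinuous, so it is an increasing limit of bounded continuous functions $\phi_m \uparrow |x-y|^{-(d-\delta)}$; by weak convergence, $\int \phi_m \, d(\mu \times \mu) = \lim_j \int \phi_m \, d(\mu^{\epsilon_j} \times \mu^{\epsilon_j}) \leq \liminf_j \Henergy_{d-\delta}(\mu^{\epsilon_j}, I)$, and letting $m \to \infty$ by monotone convergence gives
\begin{align*}
\Henergy_{d-\delta}(\mu, I) \leq \liminf_{j \to \infty} \Henergy_{d-\delta}(\mu^{\epsilon_j}, I) \quad \text{a.s.}
\end{align*}
Taking expectations and applying Fatou's lemma together with the uniform bound yields $\expect{\Henergy_{d-\delta}(\mu, I)} \leq \liminf_j \expect{\Henergy_{d-\delta}(\mu^{\epsilon_j}, I)} \leq R_{\delta}$, which is the claimed inequality.

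Finally, the Frostman conclusion: since $\expect{\Henergy_{d-\delta}(\mu, I)} < \infty$, we have $\Henergy_{d-\delta}(\mu, I) < \infty$ almost surely for every rational $\delta > 0$ and every $I \in \mathcal{Q}$; covering $C$ by countably many such intervals and using that $\mu$ is supported on $\bigcup_t C_t \subseteq C$ with $\mum{(0,1]}$ having positive expectation (so $\mu$ is not the zero measure), standard Frostman theory (see e.g. the energy method for lower bounds on Hausdorff dimension) gives that $\mu$ is a nontrivial measure with finite $(d-\delta)$-energy on $C$ for all $\delta > 0$, i.e. a Frostman measure witnessing $\haussdim C \geq d$. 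The main obstacle I anticipate is the lower semicontinuity step: one must be careful that weak convergence of $\mu^{\epsilon_j}$ on $\Rplus$ restricts properly to the compact set $\overline{I}$ (there is no escape of mass to infinity since everything lives in $I$, and the endpoint contributions vanish by atomlessness), and that the monotone approximation of the kernel interchanges correctly with the $\liminf$ — this is routine but is the one place where a sloppy argument could fail.
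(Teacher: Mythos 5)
Your proposal is correct and follows essentially the same route as the paper's own proof: establish the uniform $(d-\delta)$-energy bound for $\ep{\mu}$ from Corollary \ref{IndicatorBound}, then transfer it to $\mu$ via the weak convergence of Proposition \ref{weakConvergence} combined with lower semicontinuity of the energy functional and Fatou's lemma. Your write-up merely makes explicit the lower-semicontinuity and Fatou steps that the paper elides in the phrase ``it follows that,'' and one small remark: the inclusion $C^\epsilon_t \subset C^\epsilon$ alone does not give the two-point bound for $C^\epsilon$ — you should instead take $t \to \infty$ in Corollary \ref{IndicatorBound} via monotone convergence, using that the bound is uniform in $t$, which is what the paper implicitly does.
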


\begin{proof}
Using Proposition \ref{IndicatorBound}, we may bound the expected $(d-\delta)$-energy by
\begin{align*}
\expect{\Henergy_{d-\delta} \left( \ep{\mu}, I \right)} &= \expect{\int_I \int_I \frac{\epsilon^{-2\beta} \indicate{x,y \in \ep{C}}}{|x-y|^{d-\delta}} \, dx dy} \\
&\leq c \int_I \int_I \frac{dx \, dy}{(x \wedge y)^{\beta}|x-y|^{1-\delta}} \\
&\leq c x_1^{-\beta} \int_I \int_I \frac{dx \, dy}{|x-y|^{1-\delta}} \\
&= cx_1^{-\beta} \frac{|I|^{1+\delta}}{\delta (\delta + 1)}.
\end{align*}
Note that the bound is independent of $\epsilon$. Since, by Proposition \ref{weakConvergence}, $\mu^{\epsilon_j}$ converges weakly to $\mu$, it follows that
\begin{align*}
\expect{\Henergy_{d-\delta} \left( \mu, I \right)} &\leq \liminf_{\epsilon_j \downarrow 0} \expect{ \Henergy_{d-\delta} \left( \mu^{\epsilon_j} , I \right) } \leq R_{\delta} < \infty,
\end{align*}
which completes the proof.
\end{proof}

\subsection{The Conformal Minkowski Measure \label{conformalMinkSection}}

As we mentioned in the introduction, it remains an open problem as to whether or not our measure is related to the Minkowski measure of $\hitpoints_+$, or perhaps is some sort of Hausdorff content. Using the measures $\ep{\mu}$ we can show that our measure is some sort of \textit{Conformal Minkowski measure}. The usual definition for the $d$-dimensional Minkowski measure of an interval $I$ would be the limit
\begin{align*}
\lim_{\epsilon \downarrow 0} \epsilon^{-\beta} \left| \left \{ x \in I : \dist(x, \gamma) < \epsilon \right \} \right|.
\end{align*}
It is not easy to prove that this limit exists. It is difficult to obtain statistics on the Euclidean distance from a fixed point to an SLE curve, and even if it were possible it still might not be the right quantity to consider since Euclidean distance is not invariant under conformal maps. However, Lemma \ref{ctDistance} shows that $M_t(x)^{-\beta}$ acts as a natural distance from $x$ to the curve, and that $x \in C^{\epsilon}$ is (almost) equivalent to $\dist(x, \gamma) \leq 4 \epsilon$. This motivates the consideration of
\begin{align}\label{ConformalMinkowski}
\lim_{\epsilon \downarrow 0} \epsilon^{-\beta} \left| I \cap C \right|,
\end{align}
which we call the \textit{Conformal Minkowski measure} of $I$. Theorem \ref{DellMTheorem} and the last section show that the limit exists, and in fact converges to $\mu(I)$ in $L^1$. Based on this, we could alternatively call $\mu$ the Conformal Minkowski measure instead. In fact we conjecture, although have been unable to prove, that $\mu(I)$ is exactly the Minkowski measure of $I$, up to a fixed multiplicative constant that is the same for all $I$.

\bigskip
\bigskip

\noindent \textbf{Acknowledgements:} We are grateful to Greg Lawler for several very helpful ideas about the construction of the measure, and for giving us access to early versions of \cite{lawler:curvedim}. We thank Marc Yor for providing the extremely helpful references \cite{dellacherie_meyer} and \cite{yor_expfuncs}, for helpful comments on an earlier draft of this paper, and for many enlightening conversations. We also thank S.R.S. Varadhan for repeatedly explaining the fine points of the Doob-Meyer decomposition to the first author, and for earlier discussions on the convergence of the $\ep{\mu}$ measures. The first author would like to thank Vladas Sidoravicius and Wendelin Werner for hosting him in spring 2008 at IMPA and the \'{E}cole Normale Sup\'{e}rieure, respectively, where some of this work was completed. Most of it was done at the Courant Institute of Mathematical Sciences at New York University as part of the first author's PhD thesis, under the supervision of the second author. We would like to thank the Courant Institute and its faculty and staff for being supportive of our research.

\bigskip

At the time of Oded Schramm's untimely passing we were nearly finished with this paper, and we were looking forward to sending it to him and receiving back the helpful comments and kind words of encouragement that he was so known for. We are thankful to have known him, and we will truly miss him.

\bibliographystyle{alpha}
\bibliography{C:/Academic/Bibliographies/SLE/SLE}

\begin{thebibliography}{Kun97}

\bibitem[AS64]{AbSteg}
Milton Abramowitz and Irene~A. Stegun.
\newblock {\em Handbook of mathematical functions with formulas, graphs, and
  mathematical tables}, volume~55 of {\em National Bureau of Standards Applied
  Mathematics Series}.
\newblock For sale by the Superintendent of Documents, U.S. Government Printing
  Office, Washington, D.C., 1964.

\bibitem[AS08]{alberts_sheff:dimension}
Tom Alberts and Scott Sheffield.
\newblock Hausdorff dimension of the sle curve intersected with the real line.
\newblock {\em Electron. Jour. Probab.}, 13:1166--1188 (electronic), 2008.

\bibitem[DM82]{dellacherie_meyer}
Claude Dellacherie and Paul-Andr{\'e} Meyer.
\newblock {\em Probabilities and potential. {B}}, volume~72 of {\em
  North-Holland Mathematics Studies}.
\newblock North-Holland Publishing Co., Amsterdam, 1982.
\newblock Theory of martingales, Translated from the French by J. P. Wilson.

\bibitem[Dub03]{dubedat:triangle}
Julien Dub{\'e}dat.
\newblock S{LE} and triangles.
\newblock {\em Electron. Comm. Probab.}, 8:28--42 (electronic), 2003.

\bibitem[KS91]{karatzas_shreve}
Ioannis Karatzas and Steven~E. Shreve.
\newblock {\em Brownian motion and stochastic calculus}, volume 113 of {\em
  Graduate Texts in Mathematics}.
\newblock Springer-Verlag, New York, second edition, 1991.

\bibitem[Kun97]{kunita:book}
Hiroshi Kunita.
\newblock {\em Stochastic flows and stochastic differential equations},
  volume~24 of {\em Cambridge Studies in Advanced Mathematics}.
\newblock Cambridge University Press, Cambridge, 1997.
\newblock Reprint of the 1990 original.

\bibitem[Law05]{lawler:book}
Gregory~F. Lawler.
\newblock {\em Conformally Invariant Processes in the Plane}, volume 114 of
  {\em Mathematical Surveys and Monographs}.
\newblock American Mathematical Society, Providence, RI, 2005.

\bibitem[Law07]{lawler:curvedim}
Gregory~F. Lawler.
\newblock Dimension and natural parameterization for {SLE} curves.
\newblock arXiv:0712.3263v1 [math.PR], 2007.

\bibitem[LS08]{lawler_sheff:time}
Gregory~F. Lawler and Scott Sheffield.
\newblock Construction of the natural parameterization for {SLE} curves.
\newblock {\em In preparation}, 2008.

\bibitem[Pro04]{protter}
Philip~E. Protter.
\newblock {\em Stochastic integration and differential equations}, volume~21 of
  {\em Applications of Mathematics (New York)}.
\newblock Springer-Verlag, Berlin, second edition, 2004.
\newblock Stochastic Modelling and Applied Probability.

\bibitem[RS05]{rohde_schramm}
Steffen Rohde and Oded Schramm.
\newblock Basic properties of {SLE}.
\newblock {\em Ann. of Math. (2)}, 161(2):883--924, 2005.

\bibitem[RY99]{revuz_yor}
Daniel Revuz and Marc Yor.
\newblock {\em Continuous martingales and {B}rownian motion}, volume 293 of
  {\em Grundlehren der Mathematischen Wissenschaften [Fundamental Principles of
  Mathematical Sciences]}.
\newblock Springer-Verlag, Berlin, third edition, 1999.

\bibitem[Sch00]{schramm:LERW}
Oded Schramm.
\newblock Scaling limits of loop-erased random walks and uniform spanning
  trees.
\newblock {\em Israel J. Math.}, 118:221--288, 2000.

\bibitem[SZ07]{schramm_zhou:dimension}
Oded Schramm and Wang Zhou.
\newblock Boundary proximity of {SLE}.
\newblock arXiv:0711.3350v2 [math.PR], 2007.

\bibitem[Var01]{varadhan:cln}
S.~R.~S. Varadhan.
\newblock {\em Probability theory}, volume~7 of {\em Courant Lecture Notes in
  Mathematics}.
\newblock New York University Courant Institute of Mathematical Sciences, New
  York, 2001.

\bibitem[Wer04]{werner:hiding}
Wendelin Werner.
\newblock Girsanov's transformation for {${\rm SLE}(\kappa,\rho)$} processes,
  intersection exponents and hiding exponents.
\newblock {\em Ann. Fac. Sci. Toulouse Math. (6)}, 13(1):121--147, 2004.

\bibitem[Yor01]{yor_expfuncs}
Marc Yor.
\newblock {\em Exponential functionals of {B}rownian motion and related
  processes}.
\newblock Springer Finance. Springer-Verlag, Berlin, 2001.
\newblock With an introductory chapter by H\'elyette Geman, Chapters 1, 3, 4, 8
  translated from the French by Stephen S. Wilson.

\end{thebibliography}

\end{document}